\numberwithin{equation}{section}
\theoremstyle{plain}
\newtheorem{lemma}{Lemma}[section]
\newtheorem{proposition}[lemma]{Proposition}
\newtheorem{theorem}[lemma]{Theorem}
\newtheorem{corollary}[lemma]{Corollary}
\theoremstyle{definition}
\newtheorem{definition}[lemma]{Definition}
\newtheorem{remark}[lemma]{Remark}
\begin{document}
\newcommand{\R}{{\mathbb R}}
\newcommand{\C}{{\mathbb C}}
\newcommand{\D}{{\mathbb D}}
\newcommand{\F}{{\mathbb F}}
\renewcommand{\O}{{\mathbb O}}
\newcommand{\Z}{{\mathbb Z}} 
\newcommand{\N}{{\mathbb N}}
\newcommand{\Q}{{\mathbb Q}}
\renewcommand{\H}{{\mathbb H}}

\newcommand{\Aa}{{\mathcal A}}
\newcommand{\Bb}{{\mathcal B}}
\newcommand{\Cc}{{\mathcal C}}    
\newcommand{\Dd}{{\mathcal D}}
\newcommand{\Ee}{{\mathcal E}}
\newcommand{\Ff}{{\mathcal F}}
\newcommand{\Gg}{{\mathcal G}}    
\newcommand{\Hh}{{\mathcal H}}
\newcommand{\Kk}{{\mathcal K}}
\newcommand{\Ii}{{\mathcal I}}
\newcommand{\Jj}{{\mathcal J}}
\newcommand{\Ll}{{\mathcal L}}
\newcommand{\Llt}{{\tilde{\mathcal L}}}    
\newcommand{\Mm}{{\mathcal M}}    
\newcommand{\Nn}{{\mathcal N}}
\newcommand{\Oo}{{\mathcal O}}
\newcommand{\Pp}{{\mathcal P}}
\newcommand{\Qq}{{\mathcal Q}}
\newcommand{\Rr}{{\mathcal R}}
\newcommand{\Ss}{{\mathcal S}}
\newcommand{\Tt}{{\mathcal T}}
\newcommand{\Uu}{{\mathcal U}}
\newcommand{\Vv}{{\mathcal V}}
\newcommand{\Ww}{{\mathcal W}}
\newcommand{\Xx}{{\mathcal X}}
\newcommand{\Yy}{{\mathcal Y}}
\newcommand{\Zz}{{\mathcal Z}}

\newcommand{\zt}{{\tilde z}}
\newcommand{\xt}{{\tilde x}}
\newcommand{\Ht}{\widetilde{H}}
\newcommand{\ut}{{\tilde u}}
\newcommand{\Mt}{{\widetilde M}}

\newcommand{\yt}{{\tilde y}}
\newcommand{\vt}{{\tilde v}}
\newcommand{\wt}{{\tilde w}}

\newcommand{\Ppt}{{\widetilde{\mathcal P}}}
\newcommand{\bp }{{\bar \partial}} 

\newcommand{\Remark}{{\it Remark}}
\newcommand{\Proof}{{\it Proof}}
\newcommand{\ad}{{\rm ad}}
\newcommand{\Om}{{\Omega}}
\newcommand{\om}{{\omega}}
\newcommand{\eps}{{\varepsilon}}
\newcommand{\Di}{{\rm Diff}}
\newcommand{\im}{{\rm Im}\,}
\newcommand{\coker}{{\rm coker }\, }
\newcommand {\sppt}{{\rm sppt}\,}
\newcommand{\rk}{{\rm rank}\,}
\newcommand{\Pro}[1]{\noindent {\bf Proposition #1}}
\newcommand{\Thm}[1]{\noindent {\bf Theorem #1}}
\newcommand{\Lem}[1]{\noindent {\bf Lemma #1 }}
\newcommand{\An}[1]{\noindent {\bf Anmerkung #1}}
\newcommand{\Kor}[1]{\noindent {\bf Korollar #1}}
\newcommand{\Satz}[1]{\noindent {\bf Satz #1}}

\renewcommand{\a}{{\mathfrak a}}
\renewcommand{\b}{{\mathfrak b}}
\newcommand{\e}{{\mathfrak e}}
\renewcommand{\k}{{\mathfrak k}}
\newcommand{\pg}{{\mathfrak p}}
\newcommand{\g}{{\mathfrak g}}
\newcommand{\gl}{{\mathfrak gl}}
\newcommand{\h}{{\mathfrak h}}
\renewcommand{\l}{{\mathfrak l}}
\newcommand{\sm}{{\mathfrak m}}
\newcommand{\n}{{\mathfrak n}}
\newcommand{\s}{{\mathfrak s}}
\renewcommand{\o}{{\mathfrak o}}
\newcommand{\so}{{\mathfrak so}}
\renewcommand{\u}{{\mathfrak u}}
\newcommand{\su}{{\mathfrak su}}
\newcommand{\ssl}{{\mathfrak sl}}
\newcommand{\ssp}{{\mathfrak sp}}
\renewcommand{\t}{{\mathfrak t }}
\newcommand{\Cinf}{C^{\infty}}
\newcommand{\la}{\langle}
\newcommand{\ra}{\rangle}
\newcommand{\half}{\scriptstyle\frac{1}{2}}
\newcommand{\p}{{\partial}}
\newcommand{\notsub}{\not\subset}
\newcommand{\iI}{{I}}               
\newcommand{\bI}{{\partial I}}      
\newcommand{\LRA}{\Longrightarrow}
\newcommand{\LLA}{\Longleftarrow}
\newcommand{\lra}{\longrightarrow}
\newcommand{\LLR}{\Longleftrightarrow}
\newcommand{\lla}{\longleftarrow}
\newcommand{\INTO}{\hookrightarrow}

\newcommand{\QED}{\hfill$\Box$\medskip}
\newcommand{\UuU}{\Upsilon _{\delta}(H_0) \times \Uu _{\delta} (J_0)}
\newcommand{\bm}{\boldmath}

\title[Periodic solutions  of locally Hamiltonian equations]{\large The Calabi invariant and  the  least number of  periodic solutions of locally Hamiltonian equations
}

\author{H\^ong V\^an L\^e}
\address{Institute of Mathematics of ASCR, Zitna 25, 11567 Praha 1, Czech Republic.}
\email{hvle@math.cas.cz}

\date{\today}

\thanks{H. V. L. is  supported by RVO: 67985840}
\abstract  In this paper   we    prove     a  lower  bound  for the least number   of  one-periodic solutions  of  nondegenerate locally
Hamiltonian  equations  on compact  symplectic  manifolds  in terms of  the   Betti numbers of the Novikov homology associated  to the  Calabi invariant  of the  locally Hamiltonian equations. Our  result  improves     lower bounds  obtained by  L\^e-Ono  and  Ono   for the least number 
of   nondegenerate   locally Hamiltonian     symplectic fixed points.   Our result also generalizes    the homological Arnold conjecture  that  has been proved by     Fukaya-Ono and Liu-Tian.

\endabstract
\keywords{ symplectic  fixed point,  Floer-Novikov  homology, the Arnold  conjecture}
\subjclass[2010]{Primary   53D40 }

\maketitle

\tableofcontents

\section{Introduction}

Periodic  solutions, after  stationary   points,  are simplest  objects in the  qualitative theory  of  dynamical systems.
In this   paper  we study one-periodic solutions  of   locally  Hamiltonian systems   on compact symplectic manifolds $(M^{2n}, \om)$.
Given a symplectic form $\om$ on $M^{2n}$,  there is an  isomorphism $L_\om: TM^{2n} \to    T^*M^{2n}$  that satisfies  the following equation:
$$\la L_\om (V),W \ra:= - \om (V,W)$$
for all $V, W \in   TM^{2n}$.

Recall that a vector field  $V$  on $M^{2n}$ is said to be {\it locally Hamiltonian}, if $L_\om (V)$ is a closed 1-form on  $M^{2n}$ \cite{AM1987}.
A dynamical  system on $M^{2n}$
\begin{equation}
\frac{d}{dt} x(t) = V_t(x(t)) \label{eq:lham1}
\end{equation}
 is called {\it locally Hamiltonian}, if  $V_t$ is  a    locally Hamiltonian  vector field  on $M^{2n}$ for all $t$. We refer the reader to \cite[Chapter 16]{Tarasov2008} for   classical examples of locally Hamiltonian systems  and  to \cite{Farber2004}, \cite{FJ2003} for topological  consideration
 of    flows     generated by time independent  locally Hamiltonian  vector fields.
 
 Let $\varphi_t: M^{2n} \to M^{2n}$ be the flow
generated by the locally Hamiltonian vector fields  $V_t$  in (\ref{eq:lham1}).   
 Clearly, the set  of the fixed   points   of the time-one map $\varphi_1$  is in 1-1  correspondence   with the set 
 of  one-periodic  solutions of    (\ref{eq:lham1}), i.e. those solutions $x(t)$   with  $x(0) = x(1)$.
If we are interested only in one-periodic  solutions of    (\ref{eq:lham1}) we  can assume  w.l.o.g.  that $V_t$ is one-periodic in $t$, i.e.
$V_t = V_{t+1}$ for all $t$ \cite{LO1995}.

An important invariant  of the time-one map $\varphi_1$ is its  Calabi invariant, defined as  follows \cite{Banyaga1978}:
$$Cal(\varphi_1): = [\int_0^1 L_\om  (V_t) \, dt]\in  H^1 (M^{2n}, \R).$$ 
In \cite[Deformation Lemma 2.1]{LO1995}  L\^e-Ono showed that
there  exist  a one-periodic   Hamiltonian  function $H \in C^\infty (S^1\times  M^{2n})$   and  a closed 1-form  $\theta \in \Om ^1 (M^{2n})$ such  that the  time-one  map $\varphi_1$  associated with (\ref{eq:lham1}) is the solution  at time  $t=1$  of   the following  equation
\begin{equation}
\frac{d}{dt} \varphi_t (x) = L_\om ^{-1} (\theta +  dH_t) (\varphi_t(x)), \: \varphi_0  = Id \label{eq:lham2}
\end{equation}
where  $H_t (x) : = H(t, x)$,  and  $[\theta]  =  Cal (\varphi_1)$.  
 Henceforth  the  set of  one-periodic solutions  of (\ref{eq:lham1})  coincides  with
 the set  of one-periodic solutions  of  the following    equation
\begin{equation}
\frac{d}{dt} x(t) = L_\om ^{-1} (\theta +  dH_t)(x(t)). \label{eq:lham3}
\end{equation}
Thus, in the   present paper  we  consider only  one-periodic solutions  of  locally Hamiltonian  equations of the form (\ref{eq:lham3}). 
We    shall  also call $[\theta]$ {\it the Calabi invariant of the equation (\ref{eq:lham3})}.

 A  one-periodic  solution of   (\ref{eq:lham3}) is called {\it nondegenerate}, if  the fixed point $x(0)$ of  the associated time-one map  $\varphi_1$   is nondegenerate, or equivalently, 
$\det (Id - d\varphi_1(x(0))) \not=0$.  A  locally Hamiltonian equation  (\ref{eq:lham3})  is called {\it nondegenerate}, if  all    one-periodic solutions
of (\ref{eq:lham3})  are nondegenerate.  Since  nondegenerate fixed points of a diffeomorphism  are isolated,   a nondegenerate locally  Hamiltonian  equation on a compact  symplectic manifold $M^{2n}$ has only a finite number of  one-periodic  solutions.

In \cite{LO1995}  L\^e-Ono  introduced    Floer-Novikov   chain complexes  associated  with  nondegenerate locally Hamiltonian equations  of the  form (\ref{eq:lham3})
on  compact  weakly monotone   symplectic  manifolds $(M^{2n}, \om)$, see  also  section \ref{sec:fn}  below. As a result,  L\^e-Ono obtained the following.

\begin{proposition}\label{prop:leono} (\cite[Main Theorem]{LO1995})  Let $(M,\omega )$ be a closed  symplectic 
manifold of dimension $2n$ which satisfies the following condition
$$c_{1 \ |\pi _2 (M)} = \lambda \om _{|\pi _2 (M)}, \; \lambda \neq 0,$$ 
and if $\lambda <0$, the minimal Chern number $N$ satisfies $N>n-3$.   
Suppose $\varphi_1$ is the time-one  map of   the flow 
associated to (\ref{eq:lham3}).  
If all the fixed points of $\varphi_1$ are nondegenerate, 
then the  number of fixed points of $\varphi_1$ is at least the sum of the 
Betti 
numbers of the Novikov homology over ${\bold Z}_2$ associated to the 
Calabi 
invariant of $\varphi_1$. 
\end{proposition}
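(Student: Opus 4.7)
The plan is to associate to the nondegenerate equation (\ref{eq:lham3}) a Floer-Novikov chain complex $(CF_*, \p)$ that is generated (as a module over an appropriate Novikov ring $\Lambda$) by the one-periodic solutions of (\ref{eq:lham3}) and whose differential counts certain perturbed pseudoholomorphic cylinders, then to identify its homology with the Novikov homology $HN_*(M,[\theta];\Z_2)$ and apply a Morse-type inequality. I would start by viewing (\ref{eq:lham3}) as the formal $L^2$-gradient equation of the multi-valued action
\[\Aa_{H,\theta}(\gamma) \;=\; -\int_{D^2}\bar\gamma^*\om - \int_0^1 \gamma^*\theta + \int_0^1 H_t(\gamma(t))\,dt\]
on (a union of components of) the free loop space $\Ll M$, lifted to a single-valued function on the minimal abelian cover $\widetilde{\Ll M}$ over which both $\om$ and $\theta$ become exact. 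Critical points downstairs are precisely the one-periodic solutions of (\ref{eq:lham3}); upstairs they form orbits under the deck group $\Gamma$, whose group ring completes to $\Lambda$.

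With a generic $\om$-compatible one-periodic almost complex structure $J_t$, I consider the moduli spaces $\Mm(\xt_-,\xt_+)$ of finite-energy cylinders $u:\R\times S^1 \to M$ satisfying
\[\p_s u + J_t(u)\bigl(\p_t u - L_\om^{-1}(\theta + dH_t)(u)\bigr) = 0\]
with asymptotics at the lifted critical points $\xt_\pm$. Grading the generators by the Conley--Zehnder index modulo $2N$ and completing with respect to the $\Gamma$-action on $\Aa_{H,\theta}$, I define $\p$ by counting mod $2$ the isolated orbits in $\Mm(\xt_-,\xt_+)/\R$.

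The hardest step, and the one where the weak monotonicity hypothesis is indispensable, is compactness of the zero- and one-dimensional Floer moduli spaces. Following the Hofer--Salamon/Ono scheme adapted to the twisted setting, I need to rule out bubbling of non-constant pseudoholomorphic spheres for generic $J_t$. The conditions $c_1|_{\pi_2(M)} = \lambda\,\om|_{\pi_2(M)}$ and, when $\lambda<0$, $N>n-3$ are precisely what is required to push bubble configurations into codimension $\ge 2$ via a dimension count on sphere-bubble strata. A second subtlety is that infinitely many homotopy classes of cylinders can a priori connect two critical points; however, the extra term $\int_0^1 \gamma^*\theta$ in the action is controlled by the Novikov filtration, so within any prescribed action window only finitely many classes contribute, rendering $\p$ a well-defined Novikov series. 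Standard gluing and a count of ends of the one-dimensional moduli then give $\p^2=0$.

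It remains to identify $H_*(CF_*,\p)$ with $HN_*(M,[\theta];\Z_2)$. For this I would use a continuation argument: $H_*(CF_*,\p)$ is invariant under smooth homotopies of $(H,\theta)$ that preserve the cohomology class $[\theta]$, so it suffices to compute in the limit where $H$ is $C^2$-small and $\theta$ is Morse. There the one-periodic solutions collapse to the zeros of $\theta$ and the Floer equation reduces to the downward gradient flow of $\theta$, so $(CF_*,\p)$ degenerates to the Morse--Novikov complex of $\theta$, whose homology is by definition $HN_*(M,[\theta];\Z_2)$. Since each one-periodic solution of (\ref{eq:lham3}) contributes exactly one generator of $CF_*$ over $\Lambda$, the standard Morse rank inequality for the $\Lambda$-module $CF_*$ yields the claimed lower bound on the number of fixed points of $\varphi_1$.
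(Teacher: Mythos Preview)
Your construction of the Floer--Novikov complex is essentially correct and matches Section~\ref{sec:fn} (which reviews \cite{LO1995}): one lifts to a cover of the contractible loop space on which the action is single-valued, grades by Conley--Zehnder index mod $2N$, and the monotonicity hypothesis is indeed what controls sphere bubbling in the low-dimensional moduli spaces.

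The gap is in the identification step. You propose to make $H$ $C^2$-small with $\theta$ Morse and claim the Floer complex then ``degenerates to the Morse--Novikov complex of $\theta$''. But the Salamon--Zehnder reduction of Floer trajectories to gradient trajectories requires the \emph{entire} generating form $\theta+dH_t$ to be $C^2$-small, not just $H$. Your own continuation argument only permits homotopies preserving $[\theta]$, so you cannot shrink $\theta$; the vector field $L_\om^{-1}(\theta)$ therefore keeps fixed size, its time-one flow may well have non-constant contractible $1$-periodic orbits in addition to the zeros of $\theta$, and even between zeros the Floer cylinders have no reason to be $s$-independent gradient lines. There is simply no ``small Hamiltonian'' regime available inside a fixed nonzero Calabi class, so the degeneration you invoke does not take place.

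The route actually taken in \cite{LO1995}, and refined in this paper for the Main Theorem, confronts exactly this difficulty. One does not compute $HFN_*$ directly at the given $\theta$; instead one deforms the \emph{weight} $\lambda$ in $\lambda\cdot\theta$ toward $0$, moving along an admissible family $\Ht_\chi$ as in Subsection~\ref{subs:adm}. The substantive work is to show that the Betti numbers survive this deformation even though the underlying Novikov ring changes: the energy estimates of Lemmas~\ref{lem:onolem33}--\ref{lem:onolem34} force the boundary operator to live over a smaller ring $\Lambda^R_{\theta(\lambda-\tau,\lambda+\tau),\om}$ common to nearby weights (Theorem~\ref{thm:small}, Proposition~\ref{prop:bettismall}). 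Only after reaching weight $0$ --- a genuine Hamiltonian $\pi^*(H)$ on the cover $\Mt$ --- does a PSS-type argument identify $HFN_*$ with the Morse--Novikov homology (Theorem~\ref{thm:comp2}). The monotonicity hypothesis in Proposition~\ref{prop:leono} is precisely what allowed an early version of this weight-deformation to go through in \cite{LO1995}.
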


The restriction of Proposition \ref{prop:leono}  to  the class of positively or negatively monotone  symplectic manifolds  is caused by the difficulty in  computing
 the Floer-Novikov cohomology  which depends  on  the    Calabi invariant  $[\theta]\in H^1 (M^{2n}, \R)$  of $\varphi_1$.  In \cite{Ono2005}  Ono  refined the energy estimate in \cite{LO1995}  in order  to show  that    Floer-Novikov  chain complexes  can be defined  over  Novikov rings that are smaller than the one defined in \cite{LO1995}.  Using in addition  the construction of the Kuranishi structure  proposed by Fukaya-Ono in \cite{FO1999},  he  
proved another   variant
of   Proposition \ref{prop:leono} as follows.

\begin{proposition}\label{prop:ono} (\cite[Theorem 1.1]{Ono2005})  Let  $(M^{2n}, \om)$  be a  compact  symplectic  manifold. 
Suppose $\varphi_1$ is the time-one  map of   the flow 
associated to (\ref{eq:lham3}).   
If all fixed points of  $\varphi$ are nondegenerate, the number of fixed points  $Fix (\varphi)$ of $\varphi$ is
not less than $\sum _p \min-{\rm nov}^p (M)$.
\end{proposition}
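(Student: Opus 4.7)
The plan is to extend the Floer--Novikov construction of L\^e--Ono \cite{LO1995} from the weakly monotone class of Proposition \ref{prop:leono} to an arbitrary compact $(M^{2n},\om)$. Two upgrades are required: a sharper energy estimate allowing work over a smaller (``minimal'') Novikov coefficient ring, which is what produces the numbers $\min\text{-nov}^p(M)$; and the Kuranishi-structure machinery of Fukaya--Ono \cite{FO1999} to handle transversality for pseudoholomorphic curves in the absence of monotonicity. The output is a Morse-type inequality whose right-hand side is $\sum_p \min\text{-nov}^p(M)$.

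First I would set up the chain complex. On an appropriate abelian covering $\Llt$ of the component of the loop space $\Ll M$ containing the one-periodic solutions of (\ref{eq:lham3}), lift the multi-valued symplectic action
\[
\Aa_{H,\theta}(x) := -\int_{D^2}\bar x^*\om \,+\, \int_0^1 \bigl(\theta_x(\dot x) + H_t(x)\bigr)\,dt
\]
to a single-valued functional whose critical points are the lifts of the nondegenerate one-periodic orbits. Let $CF^*(M;\theta,H)$ be the free module on these critical points, graded by the Conley--Zehnder index modulo the minimal Chern number, so that each $\mathrm{Fix}(\varphi_1)$-orbit contributes a single $\Lambda^{\min}_{[\theta]}$-generator.

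Next I would carry out compactness via Ono's refined energy inequality: the $L^2$-energy of a Floer cylinder $u:\R\times S^1\to M$ connecting lifts $\tilde x^\pm$ is bounded by $\Aa_{H,\theta}(\tilde x^-)-\Aa_{H,\theta}(\tilde x^+)$ plus a controlled term involving the period homomorphism of $[\theta]$. The improvement over \cite{LO1995} is fine enough that only finitely many homotopy classes of connecting trajectories contribute below any given energy level, which is precisely what is needed so that the boundary operator can be defined $\Lambda^{\min}_{[\theta]}$-linearly instead of only over the larger Novikov ring used in \cite{LO1995}. This is the ``$\min$'' in $\min\text{-nov}^p(M)$.

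The main obstacle is transversality: in a non-monotone $M$, bubbling of $J$-holomorphic spheres can create boundary strata of the wrong codimension. I would equip the Gromov--Floer compactifications with Kuranishi structures in the sense of \cite{FO1999}, choose compatible multi-sections, and define $\p$ by virtual counts; then $\p^2=0$ follows from the one-dimensional Kuranishi boundary description, while compatibility with the covering $\Llt$ forces $\Lambda^{\min}_{[\theta]}$-linearity. A continuation argument deforming $(H,J)$ with $[\theta]$ fixed, and comparing with the Morse--Novikov complex of a small Morse $1$-form representing $[\theta]$, identifies $H^*(CF^*,\p)$ with the Novikov homology of $M$ attached to $[\theta]$ over $\Lambda^{\min}_{[\theta]}$, whose Betti numbers are by definition $\min\text{-nov}^p(M)$. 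The standard Morse inequality $\mathrm{rank}\,CF^*\ge \sum_p \dim HF^p$ then yields $\#\mathrm{Fix}(\varphi_1)\ge \sum_p \min\text{-nov}^p(M)$. The delicate point is arranging the Kuranishi data, orientations and multi-sections to be equivariant under deck transformations and compatible with all continuation maps, so that the Novikov linearity of $\p$ and continuation invariance of the homology both persist.
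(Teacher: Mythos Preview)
Your outline captures the two structural ingredients the paper attributes to Ono --- the refined energy estimate allowing a smaller Novikov coefficient ring, and the Fukaya--Ono Kuranishi machinery replacing monotonicity --- but it misidentifies what $\min\text{-}{\rm nov}^p(M)$ is and, as a consequence, the final step of your argument overshoots the target.

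By definition (see the paragraph following the statement), $\min\text{-}{\rm nov}^p(M)$ is the \emph{minimum over all classes} $a\in H^1(M,\R)$ of $\rk HN^p(M;a)$; it has nothing to do with a ``minimal'' coefficient ring $\Lambda^{\min}_{[\theta]}$. The reason Ono only obtains this minimum, rather than the Novikov Betti numbers at the specific Calabi class $[\theta]$, is precisely that his continuation argument must \emph{vary} $[\theta]$: the smaller ring $\Lambda^R_{\theta(\tau_1,\tau_2),\om}$ lets one compare the Floer--Novikov complexes for nearby weights, but one cannot, in \cite{Ono2005}, directly identify $HFN_*$ for the given $[\theta]$ with the Morse--Novikov homology for that same $[\theta]$. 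One only gets that $\#\mathrm{Fix}(\varphi)$ dominates the Betti numbers of $HN^*(M;a)$ for \emph{some} nearby (generic) $a$, whence the minimum.

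Your last paragraph instead claims a continuation ``with $[\theta]$ fixed'' that identifies $HF^*$ with $HN^*(M;[\theta])$. If that worked, you would have proved the Main Theorem~\ref{thm:main} of the present paper, not Proposition~\ref{prop:ono}; indeed, establishing exactly this identification (via the admissible families of \S\ref{subs:adm}--\ref{subs:special} and the PSS comparison of \S\ref{subs:comp}) is the paper's contribution beyond \cite{Ono2005}. So either your sketch is a proof of a stronger statement than the one asked --- in which case the step ``identifies $H^*(CF^*,\p)$ with the Novikov homology attached to $[\theta]$'' needs substantial justification not present in \cite{Ono2005} --- or, read as a proof of Proposition~\ref{prop:ono}, it is missing the actual mechanism by which the minimum enters: the deformation of $[\theta]$ to a generic nearby class.
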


Let us recall   the definition of  $\min-{\rm nov}^p (M)$ introduced by   Ono in \cite{Ono2005}.
For $a \in H^1 (M, \R)$ we denote by  $HN^*(M; a)$   the  Novikov cohomology  over  $\Q$ associated  with $a$.
 The function $a \mapsto  \rk HN^*(M; a)$ attains the absolute minimum
at generic $a$. Denote by $\min-{\rm nov}^p(M)$ the minimum of rank $HN^p(M; a)$,
which we call the $p$-th minimal Novikov number\cite{Ono2005}.  The  number  $\min-{\rm nov}^p(M)$  appears  because
it is also difficult to  controll  the family of  Floer-Novikov chain complexes  as $\theta$  varies.

\

Denote by $\Pp(\om, \theta, H)$ the set of all contractible one-periodic  solutions of (\ref{eq:lham3}).
The goal  of this paper   is  to  prove the following.

\begin{theorem}[Main Theorem]\label{thm:main}  Let  $(M^{2n}, \om)$  be a  compact  symplectic  manifold.        Assume  that  
all the  contractible   one-periodic  solutions of  (\ref{eq:lham3})  are  nondegenerate.
Then  the  cardinal of the set $\Pp(\om, \theta, H)$  is at least the sum of the Betti 
numbers of the Novikov homology over $\Q$ associated to the Calabi invariant of (\ref{eq:lham3}). 
If moreover $(M^{2n}, \om)$  is weakly monotone, then   for any field $\F$ the   cardinal of the set $\Pp(\om, \theta, H)$  is at least the sum of the Betti 
numbers of the Novikov homology over $\F$ associated to the Calabi invariant of (\ref{eq:lham3}). 
\end{theorem}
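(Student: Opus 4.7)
The plan is to associate to $(\omega, \theta, H)$ a Floer--Novikov chain complex $CF_*(\omega,\theta,H)$ generated over an appropriate Novikov ring $\Lambda_{[\theta]}$ by the contractible one-periodic solutions in $\Pp(\om,\theta,H)$, show that its homology is isomorphic to the Novikov homology $HN_*(M;[\theta])$, and conclude by the standard rank inequality
$$|\Pp(\om,\theta,H)| \;=\; \mathrm{rank}_{\Lambda_{[\theta]}} CF_*(\omega,\theta,H) \;\geq\; \mathrm{rank}_{\Lambda_{[\theta]}} HF_*(\omega,\theta,H) \;=\; \sum_k b_k\bigl(HN_*(M;[\theta])\bigr).$$
The generators of $CF_*$ are in bijection with $\Pp(\om,\theta,H)$, which is finite by nondegeneracy, so the theorem reduces entirely to the two tasks: construct the boundary operator honestly, and identify the homology with $HN_*(M;[\theta])$.

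For the construction I would follow the approach of L\^e--Ono recalled in Section \ref{sec:fn}, but take care to define the coefficient ring as the Novikov completion associated simultaneously to the symplectic area class and to $[\theta]$, rather than the coarser rings of Proposition \ref{prop:leono} and Proposition \ref{prop:ono}. In the weakly monotone case, classical transversality and Gromov--Floer compactness (as in \cite{LO1995}) define the boundary over an arbitrary field $\F$. In the general case, I would invoke the Kuranishi structure / virtual fundamental chain machinery of Fukaya--Ono \cite{FO1999} (refined for the locally Hamiltonian setting by Ono in \cite{Ono2005}) to define the boundary with coefficients in $\Q$. Continuation maps then give invariance under deformations of $H$ that fix the cohomology class $[\theta]$.

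The crucial step is the identification $HF_*(\omega,\theta,H)\cong HN_*(M;[\theta])$ over $\Lambda_{[\theta]}$. The plan is to build a Piunikhin--Salamon--Schwarz style chain map from the Morse--Novikov complex of a small generic closed 1-form $\theta_0$ representing $[\theta]$ to $CF_*(\om,\theta,H)$, using mixed moduli spaces of half-cylinders attached to gradient lines of $\theta_0$; alternatively, one can deform $(\theta,H)$ to a $C^2$-small time-independent pair $(\theta_0,h)$ with $\theta_0+dh$ Morse, for which the contractible one-periodic solutions are exactly the zeros of $\theta_0+dh$ and the Floer equation reduces (modulo controllable bubbling) to the Morse--Novikov flow. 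The compactness and energy estimates needed to make either approach work are already present in Ono's refinement \cite{Ono2005}; what needs to be added is a PSS-type chain homotopy equivalence compatible with the $[\theta]$-Novikov ring.

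The main obstacle is exactly this identification at the fixed class $[\theta]$, rather than at a generic nearby class. Ono's Proposition \ref{prop:ono} settles for $\min$-$\mathrm{nov}^p(M)$ precisely because the Floer--Novikov complex cannot a priori be organized into a continuous family as $[\theta]$ varies, forcing a passage to a generic representative. Fixing $[\theta]$ and nevertheless computing $HF_*$ requires constructing virtual fundamental chains on the PSS interpolation moduli spaces that are simultaneously compatible with the Floer Kuranishi structures and with the Morse--Novikov model of $[\theta]$; this is the technical heart of the argument. Once this is in place, the rank inequality above yields the weakly monotone statement over any $\F$ and the general statement over $\Q$.
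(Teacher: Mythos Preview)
Your outline correctly isolates the two ingredients---a Floer--Novikov complex freely generated by $\Pp(\om,\theta,H)$, and an identification of its homology with $HN_*(M;[\theta])$---and you correctly flag the second as the crux. But you do not actually carry it out: you propose a PSS map directly at the level of the class $[\theta]$ (half-cylinders spliced to gradient lines of a closed $1$-form) and then say ``once this is in place'' the result follows. That is precisely the step whose absence forced Ono to retreat to $\min$-$\mathrm{nov}^p(M)$, and nothing in your proposal explains how to build those mixed moduli spaces with the required compactness over the $[\theta]$-Novikov ring. So as written this is a plan, not a proof.

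The paper circumvents your obstacle by a different route. It never builds a Novikov-PSS at the class $[\theta]$. Instead it works on the fixed cover $\Mt$ associated to $[\theta]$ and considers Hamiltonians $\Ht\in C^\infty_{(\lambda)}(S^1\times\Mt)$ of the form $\lambda\cdot h^\theta+\pi^*(\bar f)$, so that the weight $\lambda$ parametrizes the conformal class $\lambda\cdot[\theta]$ while the cover stays the same. The key technical input (Theorem~\ref{thm:small}, refining Ono's energy estimate) is that for any $\Ht$ in a good neighborhood of an admissible family the boundary $\p_{(J,\Ht)}$ already lands in the chain group defined over the \emph{smaller} ring $\Lambda^R_{\theta(\lambda-\tau,\lambda+\tau),\om}=\bigcap_{\mu\in[\lambda-\tau,\lambda+\tau]}\Lambda^R_{\mu\theta,\om}$. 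Because this ring is an integral domain (Proposition~\ref{prop:compono}) and is common to an open interval of weights, the Betti numbers $b_i(HFN_*(\Ht,\F))$ are locally constant in $\lambda$ (Proposition~\ref{prop:bettismall}), hence constant on all of $\R$. At $\lambda=0$ the Hamiltonian is a lift $\pi^*(H)$ of an honest Hamiltonian on $M$, and there the \emph{ordinary} Hamiltonian PSS on $\Mt$ (Theorem~\ref{thm:comp2}) identifies $HFN_*(\pi^*(H),\F)$ with the Morse complex of $\pi^*(f)$ on $\Mt$, which is exactly the Morse--Novikov complex of $(M,[\theta])$. Thus the paper trades your hard ``Novikov-PSS at $[\theta]$'' for an easy ``Hamiltonian-PSS at $0$'' plus an invariance-in-$\lambda$ argument driven by the smaller-ring trick; the latter is what your proposal is missing.
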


We obtain   from Theorem \ref{thm:main} immediately the following generalization of Propositions \ref{prop:leono}, \ref{prop:ono}. 

\begin{corollary}\label{cor:main}  Let  $(M^{2n}, \om)$  be a  compact  symplectic  manifold. 
Suppose $\varphi_1$ is the time-one  map of   the flow 
associated to (\ref{eq:lham3}).   
If all fixed points of  $\varphi$ are nondegenerate, the number of fixed points  $Fix (\varphi)$ of $\varphi$ is
at least the sum of the 
Betti 
numbers of the Novikov homology over $\Q$ associated to the 
Calabi 
invariant of $\varphi$. If $(M^{2n}, \om)$ is weakly monotone,  then we can replace the sum of the 
Betti 
numbers of the Novikov homology over $\Q$ by the sum of the 
Betti 
numbers of the Novikov homology over $\F$  for any field  $\F$.
\end{corollary}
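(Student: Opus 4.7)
The plan is to establish the theorem by constructing a Floer--Novikov chain complex $(CF_*(\om, \theta, H), \partial)$ over the Novikov ring $\Lambda = \Lambda_{[\theta]}$ whose free generators are in bijection with $\Pp(\om, \theta, H)$, and then identifying its homology with the classical Novikov homology $HN_*(M, [\theta])$. Granting these two points, the chain-level Morse inequality
\[
|\Pp(\om, \theta, H)| = \mathrm{rank}_\Lambda CF_*(\om, \theta, H) \geq \sum_p \mathrm{rank}_\Lambda HF_p(\om, \theta, H) = \sum_p \mathrm{rank}_\Lambda HN_p(M, [\theta])
\]
delivers the stated bound, with each contractible one-periodic solution contributing exactly one free $\Lambda$-generator via a choice of lift in the Novikov covering of the contractible loop space.

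For the weakly monotone case over an arbitrary field $\F$, L\^e--Ono (Proposition 2.1) have already built the complex over $\Z_2$; to upgrade the coefficients I would invoke Ono's refined energy estimate (which allows $\partial$ to be defined over the smaller, correct Novikov ring) together with the coherent orientation scheme of Floer--Hofer--Salamon for the Floer moduli spaces, which is available without virtual perturbations in the weakly monotone range. For a general compact symplectic manifold over $\Q$, I would equip the moduli spaces of Floer--Novikov trajectories, i.e.\ perturbed $J$-holomorphic cylinders associated to the closed $1$-form $\theta + dH_t$, with the Kuranishi structure of Fukaya--Ono (or equivalently apply the virtual neighborhood technique of Liu--Tian), and use multi-valued rational perturbations to extract a virtual fundamental cycle and a well-defined boundary $\partial$ with $\partial^2 = 0$ over $\Q$. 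Standard continuation arguments give invariance of $HF_*$ as $(J, H, \theta)$ vary while $[\theta]$ is held fixed.

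The principal obstacle, and the step that upgrades Proposition 2.2 from the minimum-rank Novikov numbers $\min\text{-}\mathrm{nov}^p(M)$ to the actual Betti numbers attached to the specific class $[\theta]$, is the identification
\[
HF_*(\om, \theta, H) \cong HN_*(M, [\theta]).
\]
To establish it I would deform $(\theta, H)$ within its cohomology class to a pair $(\theta_0, \eps H_0)$ with $\theta_0$ a Novikov--Morse closed $1$-form (all zeros nondegenerate), $H_0$ time-independent and $C^2$-small, and $\eps > 0$ small. For sufficiently small $\eps$ Ono's refined energy estimate forbids sphere bubbling along connecting orbits whose action difference falls below a threshold controlled by $H_0$, and the Floer equation degenerates to the downward gradient flow of $\theta_0 - \eps\, dH_0$ with respect to a compatible metric. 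The resulting chain complex then coincides with the Morse--Novikov complex of $\theta_0$, whose homology is $HN_*(M, [\theta_0]) = HN_*(M, [\theta])$ by classical Novikov theory.

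The delicate technical step, which is where the bulk of the work will lie, is carrying out this small-parameter degeneration inside the Kuranishi/virtual cycle formalism: one must arrange the multi-valued perturbations on the Floer side so that in the limit $\eps \to 0$ they reduce to the transverse intersection theory governing the Morse--Novikov complex, and simultaneously verify that the virtual fundamental chains used in defining $\partial$ and the chain-level isomorphism with the Morse--Novikov complex are compatible. The weakly monotone refinement over a general field $\F$ is easier at this stage because genuine transversality can replace the virtual cycle, so the comparison becomes a more standard Floer--Morse correspondence.
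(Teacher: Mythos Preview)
Your proposal is really an outline for the Main Theorem (Theorem~\ref{thm:main}), from which Corollary~\ref{cor:main} follows immediately since fixed points of $\varphi_1$ are in bijection with one-periodic solutions and the contractible ones form a subset. So the substantive comparison is with the paper's proof of Theorem~\ref{thm:main}, and there the strategies diverge. The paper does \emph{not} deform $H$ to a $C^2$-small Hamiltonian while keeping $[\theta]$ fixed. Instead it works on the fixed cover $\Mt$ associated to $[\theta]$ and varies the \emph{weight} $\lambda$ in $\Ht=\lambda\cdot h^\theta+\pi^*(H)\in C^\infty_{(\lambda)}(S^1\times\Mt)$ from $1$ down to $0$. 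The heart of the argument (Theorem~\ref{thm:small}, Proposition~\ref{prop:bettismall}) is that along an admissible family the Floer--Novikov differential actually lands in a chain complex defined over a smaller ring $\Lambda^R_{\theta(\lambda-\tau,\lambda+\tau),\om}$ that is constant for nearby weights, so the Betti numbers are locally constant in $\lambda$. At $\lambda=0$ the lifted Hamiltonian $\pi^*(H)$ is a genuine bounded Hamiltonian on $\Mt$, and a PSS-type chain isomorphism (Theorem~\ref{thm:comp2}) identifies the refined Floer complex with the Morse--Novikov complex.

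The gap in your route is the step ``the Floer equation degenerates to the downward gradient flow of $\theta_0-\eps\,dH_0$''. The classical $C^2$-small argument that Floer trajectories of index one are $t$-independent relies on two ingredients: an a~priori energy bound (action difference controlled by $\|H\|$) and smallness of the Hessian so that the $k\neq 0$ Fourier blocks of the linearized operator are invertible. In the Novikov setting neither holds: connecting orbits on the cover have unbounded action drop, and the Hessian of the multivalued primitive $h^{\theta_0}$ is $\nabla\theta_0$, which is not small when $\theta_0$ is a fixed Morse one-form. Your appeal to Ono's estimate only addresses bubbling below an action threshold; it says nothing about trajectories above that threshold, nor about transversality for time-independent data. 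This is precisely the obstruction that confined Proposition~\ref{prop:leono} to monotone manifolds and forced Proposition~\ref{prop:ono} to settle for $\min\text{-}{\rm nov}^p(M)$. The paper's device of shrinking the coefficient ring and sliding $\lambda\to 0$ is what circumvents it; your outline does not supply an alternative mechanism.
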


\begin{remark}\label{rem:nov}    Corollary \ref{cor:main} also generalizes   different versions  of the homological  version of the Arnold conjecture  for Hamiltonian symplectic fixed points that have  been  proved by  Ono \cite{Ono1995} for  compact weakly monotone  symplectic   manifolds,   and by Fukaya-Ono \cite{FO1999} and Liu-Tian \cite{LT1998}  for general  compact symlectic  manifolds.


\end{remark}

To prove Theorem \ref{thm:main} in the  case  $(M^{2n}, \om)$ is  weakly monotone, we first   show that  the   underlying Novikov ring  $\Lambda ^R_{\theta, \om}$ of   Novikov-Floer chain complexes
with coefficient in  an integral domain $R$ is an integral domain (Proposition \ref{prop:compono}). Thus the Betti numbers of the Floer-Novikov homology  groups  are well-defined. Then we  introduce   notions of   an admissible family of  nondegenerate (multi-valued)  Hamiltonian  functions and its good  neighborhood, which are    generalization  and formalization  of the notion of  special  deformations of a nondegenerate 
symplectic isotopy   that has been introduced  in \cite{LO1995} and  refined in \cite{Ono2005}.
Using a  good neighborhood  of an admissible   family of nondegenerate (multi-valued) Hamiltonian  functions we 
compare  the Betti numbers of two  ``close"  Floer-Novikov   chain complexes, using and extending    results and ideas  in \cite{LO1995,Ono2005}. Then we compute the  Betti numbers   of the homology  of a refined  Floer  chain complex  on the   minimal covering $\Mt^{2n}$ of $M^{2n}$  associated with $[\theta]$, using  standard  arguments in  Floer   theory.    

\

Our paper is organized as follows.  In section  \ref{sec:fn} we recall the construction  of  Floer-Novikov chain complexes  on compact  weakly monotone   symplectic manifolds,  following \cite{LO1995}.
In section  \ref{sec:betti}  we   compute the  Betti  numbers  of  the  Floer-Novikov  homology  with coefficient in  an integral  domain $R$  in the case of  compact weakly monotone    symplectic manifold $(M^{2n}, \om)$. In section \ref{sec:main} we  prove our main  theorem.   Finally in section \ref{sec:concl}  we discuss  some  open problems.

\section{Floer-Novikov  chain complexes  on  compact weakly monotone symplectic  manifolds}\label{sec:fn}
  In this section  we summarize the construction   of  Floer-Novikov  chain complexes  on     compact weakly monotone symplectic manifolds $(M^{2n}, \om)$, following \cite{LO1995}.  


We always assume in this paper that   the  equation (\ref{eq:lham3}) is nondegenerate. We identify the set $\Pp (\om, \theta, H)$ of  contractible one-periodic solutions of (\ref{eq:lham3}) 
with  the zero-set of the following closed 1-form $d\Aa _{(\theta, H)}$ on the loop space  over $M^{2n}$:
\begin{equation}
d\Aa _{\theta, H} (x, \xi) = \int \om ( \dot{x}, \xi) + (\theta+  dH _t) (x(t))(\xi).\label{eq:ham2}
\end{equation}
 
We will restrict ourselves to the component  $\Ll M ^{2n}$ of contractible loops on
$M ^{2n}$.  We construct an associated covering space $\Llt  \Mt^{2n}$  of $\Ll M^{2n}$
such that the   pull back of  $d\Aa _{\theta, H}$ on this cover is
an exact 1-form.
Consider the following commutative diagram:
$$\begin{array}{ccccc}
\widetilde{\Ll} \Mt^{2n}& \stackrel{\tilde{j}}{\lra} & \Ll \Mt &
\stackrel{\tilde{e}}{\lra } & \Mt ^{2n} \\
\Big\downarrow\vcenter{\rlap{${\tilde{\Pi}}$}} & & 
\Big\downarrow\vcenter{ \rlap{${\Pi }$}} & &
\Big\downarrow\vcenter{ \rlap{$\pi$ }} \\ 
\widetilde{\Ll} M^{2n} & \stackrel{j}{\lra} & \Ll M^{2n} & \stackrel{e}{\lra } & M^{2n}. 
\end{array}$$

Here $\Mt ^{2n} $ denotes the covering space of $M^{2n}$ associated to the period 
homomorphism of $\theta$, 
$I_{\theta}:\pi_1(M^{2n}) \rightarrow {\bold R}$.  
In other words the covering transformation group of $\Mt$
is isomorphic to the quotient group 
$$\Gamma _1 =H _1(M^{2n},\Z) /\ker I_{\theta}.$$  

In the above diagram $e$ denotes 
the evaluation map $x \mapsto  x(0)$ and    $\Llt M^{2n} $  is the covering  of $\Ll M^{2n}$  whose desk transformation group is
$$\Gamma _2 = {\pi _2 (M^{2n}) \over \ker I_{c_1} \cap \ker I _{\om}},$$
where the homomorphisms $I _{c_1}$, $I_{\om}$: $\pi _2(M^{2n}) \to \R$
 are defined by evaluating $c_1$ and $[\omega]$ respectively.
Thus, an element of $\widetilde{\Ll}\Mt ^{2n}$ is represented by an equivalence class 
of pairs $(\tilde x,\tilde w)$, where $\tilde x$ is a loop in $\Mt  ^{2n}$,
$\tilde w$ is a disk in $\Mt ^{2n}$ bounding $\tilde x$.  The pair
 $(\tilde x, \tilde w)$ is 
equivalent to $(\tilde y, \tilde v)$ if and only if $\tilde x=\tilde y$ and 
the values of $I_{c_1}$ and $I_{\om}$ are zero on $w \# (-v)$, 
where $w=\pi (\tilde w), v=\pi (\tilde v)$.    The    covering transformation group $\Gamma$ acts as follows
\begin{equation}
 (\gamma_1 \oplus \gamma_2) [\xt ,\tilde  w ] = [\gamma_1 \cdot \xt , A_2 \# 
\gamma_1\cdot \tilde w ],
\label{eq:aum}
\end{equation}
where $A_2$ is any representative of $\gamma_2$ in $\pi _2 (M^{2n})$.   
Since  the  torsion  part of $\pi_2(M^{2n})$ lies in the intersection $\ker I_{c_1} \cap \ker I_\om$, we obtain  the following
 
\begin{lemma}\label{lem:sum} (cf. \cite[Lemma 2.2]{LO1995})  The covering transformation group  $\Gamma$ of $\widetilde{\Ll}\Mt ^{2n}\to \Ll M^{2n}$ is 
the direct sum  of the   finitely generated  torsion free  abelian groups $\Gamma _1$ and $ \Gamma _2$. 
\end{lemma}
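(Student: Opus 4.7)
The plan is to verify three claims separately: (i) $\Gamma_1$ is finitely generated and torsion-free, (ii) $\Gamma_2$ is finitely generated and torsion-free, and (iii) the full covering transformation group decomposes as the internal direct sum $\Gamma_1 \oplus \Gamma_2$.

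For (i), I would first note that since $\R$ is abelian, the homomorphism $I_\theta:\pi_1(M^{2n})\to\R$ factors through the abelianization $H_1(M^{2n},\Z)$. Hence $\Gamma_1 = H_1(M^{2n},\Z)/\ker I_\theta$ is isomorphic to the image of $I_\theta$ in $\R$. Since $M^{2n}$ is compact, $H_1(M^{2n},\Z)$ is finitely generated, so this image is a finitely generated subgroup of $\R$, hence a free (in particular torsion-free) abelian group of finite rank. For (ii), the key observation is that both $I_{c_1}$ and $I_\omega$ are defined by pairing with cohomology classes of $M^{2n}$, so they factor through the Hurewicz homomorphism $h:\pi_2(M^{2n})\to H_2(M^{2n},\Z)$. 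Then $\Gamma_2$ is isomorphic to the image of the combined map $(I_{c_1},I_\omega):\pi_2(M^{2n})\to\R\oplus\R$, which equals the image of $(I_{c_1},I_\omega)\circ h^{-1}$ restricted to $h(\pi_2)$. Since $H_2(M^{2n},\Z)$ is finitely generated, this image is a finitely generated subgroup of $\R^2$, hence free abelian of finite rank. Any torsion element of $\pi_2(M^{2n})$ maps to zero in $\R^2$ (as stated in the text right before the lemma), confirming torsion-freeness.

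For (iii), I would argue directly from the construction and the action formula \eqref{eq:aum}. The subgroup of covering transformations coming from $\Gamma_1$ acts on $[\tilde x,\tilde w]$ by translating the loop $\tilde x$ in $\Mt^{2n}$ (trivially extending to the disk), while $\Gamma_2$ acts by connect-summing a representative sphere to $\tilde w$ while leaving $\tilde x$ fixed. These two actions commute, and their intersection is trivial: an element fixing the equivalence class $[\tilde x,\tilde w]$ that acts trivially on $\tilde x$ must lie in $\Gamma_2$, and one that acts trivially on the disk component must lie in $\Gamma_1$, so anything in both is the identity. Since every element of $\Gamma$ can be written as a $\Gamma_1$-translation followed by a $\Gamma_2$-bubble attachment, we get $\Gamma=\Gamma_1\oplus\Gamma_2$.

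The main potential obstacle is subtlety (ii): a priori $\pi_2(M^{2n})$ need not be finitely generated as an abstract abelian group, so one cannot directly conclude that its quotient is finitely generated. The resolution is precisely the factorization through $H_2(M^{2n},\Z)$ noted above, together with the observation that the torsion part of $\pi_2(M^{2n})$ is automatically absorbed into $\ker I_{c_1}\cap\ker I_\omega$ since $\R^2$ is torsion-free. Once this is in place, the rest of the proof is a direct verification from the definitions of the coverings $\Mt^{2n}\to M^{2n}$ and $\Llt M^{2n}\to \Ll M^{2n}$ and the action formula \eqref{eq:aum}.
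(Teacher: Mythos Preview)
Your proposal is correct and in fact supplies considerably more detail than the paper does: the paper's entire argument is the single observation preceding the lemma, namely that the torsion part of $\pi_2(M^{2n})$ lies in $\ker I_{c_1}\cap\ker I_\omega$, together with a reference to \cite[Lemma 2.2]{LO1995}. Your treatment of (ii) via factorization of $(I_{c_1},I_\omega)$ through the Hurewicz map into the finitely generated group $H_2(M^{2n},\Z)$ is exactly the right way to handle the potential infinite generation of $\pi_2(M^{2n})$, and your verification of (iii) from the action formula \eqref{eq:aum} makes explicit what the paper leaves implicit in the construction of the covering tower.
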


Observe  that there exists a unique  up to a constant Hamiltonian $\Ht \in C^\infty (S^1 \times \Mt ^{2n}) $   
such that 

\begin{equation}
d\Ht_t  =  \pi^*(\theta  + dH_t) \label{eq:lift}
\end{equation}
  for all $t \in S^1$. For the sake of simplicity, we also denote by $\om$ the  symplectic form $\pi^*(\om)$ on $\Mt ^{2n}$. Clearly, the time-dependent Hamiltonian
flow on $\Mt ^{2n} $ generated by $\Ht$ is the pull-back of the original
symplectic flow on $M ^{2n}$.  In particular, the set of contractible
one-periodic solutions 
$$\Pp (\Ht ): = \Pp(\om, 0, \Ht)$$ 
coincides with  the set $\pi ^{-1}( (\Pp (\om, \theta, H )))$. 
Furthermore, $\Ppt(\Ht ):=\tilde{j}^{-1}(\Pp (\Ht))$ is the critical 
set of the
following  action functional  
$$\Aa _{\Ht} ([\xt, \tilde w])=  -\int_D \tilde w ^*\om +\int _0^1 \Ht (t, \xt (t))\, dt. $$

Denote by $\Jj (M^{2n}, \om)$ the set of all smooth compatible   almost  complex structures on $(M^{2n}, \om)$.  
Let   $\Jj_{reg} (M^{2n}, \om)\subset \Jj(M^{2n}, \om)$ be the subset  of  regular   compatible almost   complex structures,  see \cite{HS1994}  and Remark \ref{rem:comment1} for a short  explanation.
For  $J \in  \Jj_{reg} (M^{2n}, \om)$  we also denote by $J$  the  lifted  almost complex structure on $\Mt^{2n}$. 
Let us denote by  $g $  the associated  Riemannian metric on $\Mt$.  %
Using  $\om  (X, Y) = g (JX, Y)$ we obtain
$$X_{\Ht_t} : = L_{\om} ^{-1}(d\Ht_t) = J\nabla  \Ht_t$$
where $\nabla$ denotes the   gradient w.r.t the Riemannian metric $g$. 
We now consider the space  $\Mm ([\xt ^-, \tilde w^-], [\xt ^+, \tilde w ^{+}]; \Ht, J)$ of {\it connecting
orbits} $\ut :\R \times S^1 \to \Mt ^{2n}$  satisfying the   equation  of   $L_2$-gradient  flow  on $\Ll\Mt^{2n}$:
\begin{equation}
\overline{\p} _{J,\Ht } (\ut )= {\p \ut\over \p s} + J(u)({\p \ut \over \p t} - X_{\Ht _t} (\ut))=0,\label{eq:conl}
\end{equation}
with the following boundary conditions 
\begin{eqnarray}
\lim _{s \to \pm \infty} \ut (s,t) = \xt ^{\pm} (t)  \in \Pp(\Ht)\label{eq:conb}\\
\, [\tilde x^-, \tilde w^- \# \tilde u]=[\tilde x^+,\tilde w^+].\label{eq:conh}
\end{eqnarray}
The following energy identity for  
 $u \in \Mm ([\xt ^-, \ut^-], [\xt ^+, \ut ^{+}]; \Ht, J)$  is crucial in  the theory of Floer(-Novikov)  chain complexes:
\begin{equation}
E(\ut) = \int _{-\infty}^{\infty} \int_0^1 \vert{\p \ut\over \p s}\vert^2\, dtds = \Aa _{\Ht} ([\xt ^- , \tilde w^-]) - \Aa _{\Ht}([\xt ^+, \tilde w ^+]). \label{eq:energyid}
\end{equation}
The dimension  of the  space  of   connecting orbits  is  computed as follows 
$$\dim \Mm ([\xt ^-, \ut^-], [\xt ^+, \ut ^{+}]; \Ht, J)= \mu ([\xt ^-, \tilde w ^-])
-\mu ([\xt ^+ , \tilde w^+]), $$
where  $\mu ([\xt, \tilde w])$ is the Conley-Zehnder index   of  $[\xt , \tilde w ]$.
The Conley-Zehnder index 
 satisfies the following identity:
$$ \mu ([ \xt , A\# \tilde w ]) -  \mu ([\xt, \tilde w]) = 2c_1(A)  
\text{~for~} A \in \pi_2(M^{2n}).$$

Let $N$ be the minimal Chern number of $(M^{2n}, \om)$  and $\xt \in \Ppt  (\Ht)$.
We will write $\mu (\xt) = k \in \Z_{2N} : = \Z/ 2N $ if there is a bounding disk 
$\tilde w$ such that $\mu ([\xt ,\tilde w]) =k \mod  2N$.

For $k \in \Z_{2N}$ we set
 $$\Ppt _k(\Ht ): =\{ [\xt , \ut ]\in \Ppt (\Ht)|\, \mu ([\xt ,\ut ])=k\}.$$
Let $R$ be an integral domain. We define the {\it Floer-Novikov chain groups  $CFN_*( \Ht,  R)$} as follows.
\begin{eqnarray*}
CFN_k( \Ht,  R)& : = &\{ \sum \xi_{[\xt ,\tilde w ]} \cdot [\xt ,\tilde w ], \,  [\xt ,\tilde w ] \in \Ppt _k(\Ht ), \; \xi_{[\xt ,\tilde w ]} \in R| \\
& &\text{ for all } c \in {\bold R} \text{ there is    only finite  number of} \\
 & &  [\xt ,\tilde w ]\text{ such  that }  \xi_{[\xt ,\tilde w ]} \neq 0 \, \&\, 
 \Aa_{\Ht}([\xt ,\tilde w ]) > c \}
\end{eqnarray*}

Set
$$\Gamma_2^\circ : ={\ker I _{c_1} \over \ker I _{c_1} \cap \ker I_{\om}}\subset \Gamma_2,$$
$$\Gamma ^\circ : = \Gamma_1 \oplus  \Gamma_2 ^\circ\subset \Gamma.$$
Then  $\Gamma ^0$ is  a finitely generated torsion free  abelian group.



Following \cite{LO1995}  we  denote by $\Lambda^R _{\theta, \om}$ {\it the upward  completion  of  the   group ring }   $R[\Gamma^\circ]$  w.r.t. the weight  homomorphism
$\Psi_{\theta, \om} : = I_\theta \oplus- I_{\om}$, which we also call {\it  the Novikov  ring}.
More precisely, 
\begin{eqnarray}
\Lambda^R _{\theta, \om}& : = &\{ \sum \lambda _g\cdot g, \, g \in \Gamma^\circ , \; \lambda _g\in R\, | \text{ for all } c \in \R \text{ there is    only}\nonumber \\
& & \text{ finite  number of }   g  \text{ such  that } \lambda _g \neq 0 \, \&\, 
 \Psi_{\theta, \om}(g) < c\} \label{eq:novikov}
\end{eqnarray}



The Novikov  ring $\Lambda ^R _{\theta, \om}$
is a commutative ring with unit. It  acts on $CFN_* ( \Ht ,  R)$ in the following way. For   $\lambda = \sum \lambda_g \cdot g \in \Lambda ^R _{\theta, \om}$  and for  $\xi = \sum\xi_{[\xt ,\tilde w ]} \cdot [\xt ,\tilde w ] \in CFN_*(\Ht,  R)$  we let
$$(\lambda * \xi) : = \sum (\lambda * \xi)_{[\xt , \tilde w]}[\xt, \tilde w]$$
where
$$ (\lambda * \xi)_{[\xt , \tilde w]}: = \sum_{g\in \Gamma^0} \lambda _g \xi_{-g\circ [\xt ,\tilde w]}.$$

\begin{lemma}\label{lem:chain}(\cite [Lemma 4.2]{LO1995}) For any $k \in \Z_{2N}$, the chain group $CFN_k ( \Ht,  R )$ is a  finitely generated free module 
over the commutative ring $\Lambda _{\theta , \om} ^R$. The rank of this module is the 
cardinal of the set $\Pp _k(\om, \theta, H)$.
\end{lemma}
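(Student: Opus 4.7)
The plan is to exhibit $CFN_k(\Ht,R)$ as a free $\Lambda^R_{\theta,\om}$-module by selecting one $\Gamma^\circ$-orbit representative for each element of $\Pp_k(\om,\theta,H)$. Two transformation identities drive the whole argument. First, under the deck action of $g=\gamma_1\oplus\gamma_2\in\Gamma$ with $\gamma_2=[A_2]$, a direct computation using $d\Ht=\pi^*(\theta+dH)$ yields
\begin{equation*}
\Aa_{\Ht}(g\cdot[\xt,\tilde w])=\Aa_{\Ht}([\xt,\tilde w])+I_\theta(\gamma_1)-I_\om(\gamma_2)=\Aa_{\Ht}([\xt,\tilde w])+\Psi_{\theta,\om}(g),
\end{equation*}
since $\Ht$ shifts by exactly $I_\theta(\gamma_1)$ under $\gamma_1$ (the only contribution to integrating $d\Ht=\pi^*(\theta+dH)$ along a loop representing $\gamma_1$ being $I_\theta(\gamma_1)$) while $\int\tilde w^*\om$ absorbs $I_\om(A_2)$ when $\tilde w$ is replaced by $A_2\#\tilde w$. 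Second, combining the quoted identity $\mu([\xt,A\#\tilde w])-\mu([\xt,\tilde w])=2c_1(A)$ with the obvious invariance of the Conley--Zehnder index under the covering diffeomorphisms in $\Gamma_1$, the subgroup $\Gamma^\circ=\Gamma_1\oplus\Gamma_2^\circ$ is precisely the stabilizer of the integer-valued $\mu$.

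Fix any integer lift of $k\in\Z_{2N}$, which we also call $k$. By Lemma \ref{lem:sum}, the projection $\tilde\Pi:\Ppt(\Ht)\to\Pp(\om,\theta,H)$ makes each fiber a free $\Gamma$-torsor. The slice $\Ppt_k(\Ht)$ intersects each such fiber in a single $\Gamma^\circ$-orbit: the quotient $\Gamma/\Gamma^\circ\cong I_{c_1}(\pi_2)=N\Z$ acts on the $\Z$-valued index by shifts in $2N\Z$, cutting the fiber into integer-index slices indexed by $k+2N\Z$, exactly one of which equals $k$ on the nose. Consequently, for each $x\in\Pp_k(\om,\theta,H)$ we may choose a representative $[\tilde x,\tilde w_x]\in\Ppt_k(\Ht)$, and these form a complete system of $\Gamma^\circ$-orbit representatives of the asserted cardinality.

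Now define $\Phi:\bigoplus_{x\in\Pp_k(\om,\theta,H)}\Lambda^R_{\theta,\om}\cdot e_x\to CFN_k(\Ht,R)$ by $\Phi(\lambda\cdot e_x)=\lambda*[\tilde x,\tilde w_x]$ and extend linearly. Injectivity is immediate from freeness of the $\Gamma^\circ$-action and disjointness of the $|\Pp_k|$ orbits; surjectivity follows by writing every $[\xt,\tilde w]\in\Ppt_k(\Ht)$ uniquely as $g^{-1}\cdot[\tilde x,\tilde w_x]$ and reading off $\lambda^x_g:=\xi_{g^{-1}\cdot[\tilde x,\tilde w_x]}$. The Novikov finiteness conditions match thanks to the first transformation identity above: the defining condition $\Aa_{\Ht}([\xt,\tilde w])>c$ on $CFN_k$ becomes $\Psi_{\theta,\om}(g)<\Aa_{\Ht}([\tilde x,\tilde w_x])-c$, which is precisely the Novikov condition defining $\Lambda^R_{\theta,\om}$. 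The main obstacle — and the only genuinely non-formal step — is the orbit-count in the previous paragraph, where one must keep careful track of the $\Z$-valued versus $\Z_{2N}$-valued index so as not to overcount by the infinite group $\Gamma/\Gamma^\circ$; once that bookkeeping is secured, the rest of the argument is the standard Novikov-module calculation on a covering space.
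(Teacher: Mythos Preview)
The paper does not supply its own proof of this lemma; it is cited directly from \cite[Lemma 4.2]{LO1995} without further argument. Your proof is the standard one expected here: identify $\Gamma^\circ\subset\Gamma$ as precisely the subgroup preserving the integer Conley--Zehnder index, observe that each fiber of $\Ppt_k(\Ht)\to\Pp_k(\om,\theta,H)$ (with $k$ lifted to an integer) is a single free $\Gamma^\circ$-orbit, and match the two Novikov finiteness conditions via the action-shift formula $\Aa_{\Ht}(g\cdot p)=\Aa_{\Ht}(p)+\Psi_{\theta,\om}(g)$. This is exactly the computation carried out in the original reference, so there is nothing substantive to compare.

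One minor bookkeeping remark: with the module action as defined in the present paper, namely $(\lambda*\xi)_p=\sum_g\lambda_g\,\xi_{-g\cdot p}$, one has $g*e_x$ supported at $g\cdot e_x$, so the inverse to your map $\Phi$ should read $\lambda^x_g=\xi_{g\cdot e_x}$ rather than $\xi_{g^{-1}\cdot e_x}$. Your choice of $g^{-1}$ is what makes the finiteness directions line up, and the apparent discrepancy traces back to a sign-convention tension in the paper's own definitions of $\Lambda^R_{\theta,\om}$ (upward in $\Psi$) versus $CFN_k$ (downward in $\Aa$), not to any error in your reasoning.
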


\begin{remark}\label{rem:changeh}  Lemma  \ref{lem:chain} reflects the fact that the   ground ring $\Lambda_{\theta, \om}^R$ for  the  chain  group $CFN_* ( \Ht, R)$  depends only on the 
cohomology  class $[\theta]$  such that   $\pi^*(\theta) = d\Ht$.  We  now express  this   fact in a slightly different way.
  Using the compactness  of $M$  and the finiteness of $\Pp(\om, \theta, H)$, we   characterize  the chain  group $CFN_* (\Ht, R)$  as follows.  Let  $\Ht'  =  \Ht +  \pi  ^*(df)$ where  $f \in C^\infty  (S^1 \times M^{2n})$.  Then  it is not hard  to  see
\begin{eqnarray*}
CFN_k( \Ht,  R)& : = &\{ \sum \xi_{[\xt ,\tilde w ]} \cdot [\xt ,\tilde w ], \,  [\xt ,\tilde w ] \in \Ppt _k(\Ht ), \; \xi_{[\xt ,\tilde w ]} \in R| \\
& &\text{ for all } c \in {\bold R} \text{ there is    only finite  number of} \\
 & &  [\xt ,\tilde w ]\text{ such  that }  \xi_{[\xt ,\tilde w ]} \neq 0 \, \&\, 
 \Aa_{\Ht'}([\xt ,\tilde w ]) > c \}.
\end{eqnarray*}
\end{remark}

We regard $R$  as  a  right $\Z$-module,   and  we denote by $1_R$  the  unit  of  $R$.   For a generator $[\xt , \tilde w ]$ in $CFN_k (\Ht, R)$, we define the boundary operator 
$\partial_{(J, \Ht)} $ as
follows.  

\begin{equation}
 \partial_{(J,\Ht)} ([\xt , \tilde w ]): = \sum_{\mu ([\yt , \vt ])=k-1}  1_R\cdot  n([\xt ,\tilde w],
[\yt , \vt])[\yt , \vt] , \nonumber
\end{equation}
where $ n([\xt ,\tilde w],[\yt , \vt])$ denotes  of the 
algebraic number of the solutions  in the space $\Mm ([\xt ,\tilde w],[\yt , \vt]; \Ht , J)/\R$, where $\R$ acts by translation in variable $s$
It is known that
\begin{equation}
\partial_{(J,\Ht)} ([\xt , \tilde w ]) \in CFN_{k-1} (\Ht, R). \label{eq:bo}
\end{equation}

Observe  that $\p _{(J, \Ht)}$ is invariant under the action of $ \Gamma^0$. Taking into  account (\ref{eq:bo}), this allows us to  extend $\partial_{(J,\Ht)}$ as
 a $\Lambda^R _{\theta, \om}$-linear map from $CFN_k (\Ht,R )$ to $CFN_{k-1}(\Ht, R )$.  
Using the standard
 gluing argument 
and the weak compactness argument, see e.g. \cite{AD2014, McDS2004, Schwarz1994}, we 
deduce that $\partial_{(J, \Ht)} ^2=0$.  
The   chain complex $ (CFN_*(\Ht, R) , \p _{J, \Ht})$ is called {\it  the Floer-Novikov chain  complex} associated with  $(\Ht, J)$.
For $k \in \Z_{2N}$, the homology group
$$HFN_k (\Ht, J, R) = {\ker \partial_{J, \Ht} \cap CFN_k(\Ht, R)\over \text{im } 
\partial_{J, \Ht}(CFN_{k+1} (\Ht, R))}$$
is called the  $k$th Floer-Novikov homology group of the  pair $(\Ht, J)$
with coefficients in $\Lambda ^R_{\theta, \om}$.  
The following theorem shows that the Floer homology groups are invariant
under exact deformations.  

\begin{proposition}\label{prop:iso} (cf.\cite[Theorem 4.3]{LO1995})  For {\it generic} pairs $(\Ht ^{\alpha}, J^{\alpha})$,
$(\Ht ^{\beta}, J^{\beta})$ such that $\Ht ^{\alpha}- \Ht ^{\beta}= \pi^*(H^{\alpha, \beta})$ for some  $H^{\alpha, \beta}\in C^\infty (S^1 \times  M^{2n})$
there exists a natural  
chain homotopy equivalence
$$\Phi^{\beta , \alpha}:( CFN_* (\Ht ^{\alpha}, R), \p_{(J^{\alpha}, \Ht ^\alpha)})  \rightarrow 
(CFN_*(\Ht^{\beta}, R), \p_{(J^{\beta},\Ht^{\beta})}).$$
\end{proposition}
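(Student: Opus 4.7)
The approach is to adapt the standard Floer continuation argument to the Novikov setting, following the template of \cite{LO1995,Schwarz1994}. First I would choose a smooth homotopy of pairs $(\Ht^s, J^s)_{s\in\R}$ with $(\Ht^s, J^s) = (\Ht^\alpha, J^\alpha)$ for $s \leq -1$ and $(\Ht^s, J^s) = (\Ht^\beta, J^\beta)$ for $s \geq 1$. The hypothesis $\Ht^\alpha - \Ht^\beta = \pi^*(H^{\alpha,\beta})$ allows me to arrange that $\Ht^s - \Ht^\alpha = \pi^*(G^s)$ for a family $G^s \in C^\infty(S^1 \times M^{2n})$ compactly supported in $s$. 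For generic such homotopy the $s$-dependent Floer equation
$$\frac{\partial \ut}{\partial s} + J^s(\ut)\Bigl(\frac{\partial \ut}{\partial t} - X_{\Ht^s_t}(\ut)\Bigr) = 0,$$
with asymptotics and capping conditions as in (\ref{eq:conb})--(\ref{eq:conh}), will have moduli spaces $\Mm^{\alpha\beta}([\xt^-,\tilde w^-],[\xt^+,\tilde w^+])$ that are smooth manifolds of dimension $\mu([\xt^-,\tilde w^-]) - \mu([\xt^+,\tilde w^+])$, with no $\R$-translation action to quotient by.

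Next I would define $\Phi^{\beta,\alpha}$ on generators by
$$\Phi^{\beta,\alpha}([\xt^-,\tilde w^-]) := \sum_{\mu([\xt^+,\tilde w^+]) = k} 1_R\cdot n^{\alpha,\beta}([\xt^-,\tilde w^-],[\xt^+,\tilde w^+])\cdot [\xt^+, \tilde w^+],$$
where $n^{\alpha,\beta}$ is the algebraic count of the zero-dimensional component, and extend $\Lambda^R_{\theta,\om}$-linearly; this extension is legitimate because the continuation equation is equivariant under the deck group $\Gamma$. The chain map identity $\partial_{(J^\beta,\Ht^\beta)}\circ \Phi^{\beta,\alpha} = \Phi^{\beta,\alpha}\circ \partial_{(J^\alpha,\Ht^\alpha)}$ then follows by the standard gluing-plus-compactness description of the boundary of the one-dimensional moduli space: it consists of broken configurations of the form \emph{gradient flow for $\Ht^\alpha$ followed by continuation} and \emph{continuation followed by gradient flow for $\Ht^\beta$}. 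To produce a homotopy inverse I would reverse the homotopy to build $\Phi^{\alpha,\beta}$ and then, by deforming the concatenation of the two homotopies through a two-parameter family to the constant homotopy, obtain a chain homotopy $\Phi^{\alpha,\beta}\circ \Phi^{\beta,\alpha} \simeq \mathrm{Id}$, and symmetrically.

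The main obstacle, and the reason the exactness hypothesis enters, is the energy estimate on the \emph{noncompact} covering $\Mt^{2n}$. Differentiating the action along a continuation solution $\ut$ produces
$$E(\ut) = \Aa_{\Ht^\alpha}([\xt^-,\tilde w^-]) - \Aa_{\Ht^\beta}([\xt^+,\tilde w^+]) + \int_{-\infty}^{\infty}\!\int_0^1 \frac{\partial G^s}{\partial s}\bigl(t, \pi(\ut(s,t))\bigr)\,dt\,ds.$$
Because $G^s$ descends to the \emph{compact} manifold $M^{2n}$ and is compactly supported in $s$, the last integral is bounded uniformly by a constant $C$ depending only on the chosen homotopy. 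This yields the crucial filtration estimate $\Aa_{\Ht^\beta}([\xt^+,\tilde w^+]) \geq \Aa_{\Ht^\alpha}([\xt^-,\tilde w^-]) - C$, which ensures both that $\Phi^{\beta,\alpha}([\xt^-,\tilde w^-])$ lies in the Novikov completion $CFN_k(\Ht^\beta, R)$ and that, combined with the weakly-monotone bubbling analysis of \cite{HS1994,LO1995}, the one-dimensional moduli spaces compactify to manifolds with exactly the expected broken boundary. Without the exactness hypothesis, the lifted difference $\Ht^\alpha - \Ht^\beta$ would be unbounded on $\Mt^{2n}$, the continuation error term would escape control, and the Novikov filtration argument needed to make the sums defining $\Phi^{\beta,\alpha}$ convergent in $\Lambda^R_{\theta,\om}$ would collapse.
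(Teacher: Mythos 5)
The paper gives no proof of Proposition~\ref{prop:iso}; it cites \cite[Theorem~4.3]{LO1995}. Your outline is the standard Floer--Novikov continuation argument, which is indeed what that reference uses, and you correctly pinpoint the role of the exactness hypothesis: because $\Ht^\alpha-\Ht^\beta = \pi^*(H^{\alpha,\beta})$, the $s$-derivative $\partial_s G^s$ in the continuation energy identity is a function on the compact quotient $M^{2n}$, so the error term is uniformly bounded by some constant $C$ depending only on the chosen homotopy.

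There is, however, a gap in the step meant to show that $\Phi^{\beta,\alpha}([\xt^-,\tilde w^-])$ lands in the Novikov completion. First, the sign of your ``crucial filtration estimate'' is reversed: from $E(\ut)\geq 0$ and the uniform bound on the error term, the energy identity gives $\Aa_{\Ht^\beta}([\xt^+,\tilde w^+]) \leq \Aa_{\Ht^\alpha}([\xt^-,\tilde w^-]) + C$, not a lower bound. More importantly, no action inequality by itself produces the upward finiteness condition defining $CFN_k(\Ht^\beta,R)$, namely that for each $c\in\R$ only finitely many contributing $[\xt^+,\tilde w^+]$ satisfy $\Aa_{\Ht^\beta}([\xt^+,\tilde w^+])>c$. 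What closes the argument is the observation that whenever $n^{\alpha,\beta}([\xt^-,\tilde w^-],[\xt^+,\tilde w^+])\neq 0$ and $\Aa_{\Ht^\beta}([\xt^+,\tilde w^+])>c$, the energy identity forces $E(\ut) < \Aa_{\Ht^\alpha}([\xt^-,\tilde w^-]) - c + C$, a bound depending only on $c$ and the input, and Gromov compactness on the compact $M^{2n}$ (together with freeness of the deck action) then limits the possible $[\xt^+,\tilde w^+]$ to finitely many. You invoke compactness for the broken-boundary structure of the one-dimensional moduli spaces; the same compactness input is needed already to show that the sum defining $\Phi^{\beta,\alpha}$ is a legitimate element of $CFN_k(\Ht^\beta,R)$, and the displayed inequality does not substitute for it.
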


\begin{remark}\label{rem:comment1}   In our simplified  exposition of  the  theory of  Floer-Novikov   homology  we  did not specify the   regularity condition posed on  a compatible almost complex structure 
$J \in \Jj_{reg} (M^{2n}, \om)$  and we also omit  a $J$-regularity condition on  a nondegenerate  Hamiltonian $\Ht$. These conditions   have been introduced in \cite{HS1994}  for compact weakly monotone  symplectic manifolds $(M^{2n}, \om)$ and extended in \cite{LO1995} for  regular coverings of $(M^{2n}, \om)$.
Roughly speaking,   a  compatible almost  complex  complex structure $J$  is  called {\it regular},  if   the moduli space of $J$-holomorphic  spheres realizing  a homology class $A \in H_2 (M^{2n}, \Z)$ is a manifold  for any $A$.  Given a regular   compatible  almost  complex structure $J$,    a nondegenerate  Hamiltonian $\Ht\in C^\infty (S^1\times \Mt)$  is called   
{\it  $J$-regular},  if  the following three  conditions  hold.\\
(1) The set of points  in $M^{2n}$  that lie on  contractible orbits  in $\Pp (\Ht)$   does not  intersect  with the  set $M_1(J)$ consisting of  points lying  on $J$-holomorphic spheres  of Chern index  at most 1.\\
(2)  The space  of  connecting  orbits  defined by (\ref{eq:conl}), (\ref{eq:conb}) , (\ref{eq:conh}) is   a  finite dimensional manifold.\\
(3)  The set of points  in $M^{2n}$  that lie on  the  connecting orbits  of relative Conly-Zehnder index at most 2    does not  intersect  with the  set $M_0(J)$ consisting of  points lying  on $J$-holomorphic spheres  of Chern index  at most 0.

\end{remark}

By Proposition \ref{prop:iso} the  Floer Novikov homology  group $HFN_* (\Ht, J, R)$ depends only on  $\Ht, R$. So we shall abbreviate it
as $HFN_*(\Ht, R)$. 
We also   abbreviate $(CFN_*(\Ht, R), \p _{( J, \Ht)})$  as  $CFN_*(\Ht,  J, R)$ if   it does not cause  a confusion.


\section{The Betti numbers  of Floer-Novikov  homology}\label{sec:betti}

In this  section we    restrict  ourselves to  the case    of weakly monotone  symplectic  manifolds $(M^{2n}, \om)$ with  minimal Chern number $N$.  
We  fix  a covering $\Mt$ associated with  a    class $[\theta]\in  H^1 (M^{2n}, \R)$. 
First we   show that  the Novikov ring $\Lambda ^R _{\theta, \om}$  is an integral domain  for any  integral domain  $R$ (Proposition \ref{prop:compono}). Hence  the Betti numbers of the Floer-Novikov homology  are well-defined, see  (\ref{eq:ucf1}).
Then we  prove  that  the   Betti numbers  of the Floer-Novikov homology $HFN_*( \Ht, R)$   do not depend  on the choice of $\Ht$ (Theorem \ref{thm:comp}).
For this  purpose   we   first show   that   the chain  complex  $(CFN_*(\Ht), \p_{(J, \Ht)})$   is     an  extension by scalars  of  a    chain complex    with  the same generators  but  defined on   a proper  sub-ring of the  Novikov  ring
(Theorem \ref{thm:small}).  Hence   the   Betti numbers  of the Floer-Novikov homology $HFN_*(\Ht, R)$  are equal to the  Betti numbers   of  the ``smaller''  Floer-Novikov homology groups (Proposition \ref{prop:rank}).   Then we   introduce the notions  of  an admissible   family of nondegenerate (multi-valued) $J$-regular Hamiltonian  functions   and its good neighborhoods  and   study their  properties (Definitions \ref{def:adm}, \ref{def:good}, Theorem \ref{thm:small}, Proposition \ref{prop:rank}). Using the obtained results,  we prove that the Betti numbers  of the Floer-Novikov homology $HFN_*( \Ht, R)$   locally  do not   depend on the ``weight'' of  their Calabi invariant  (Proposition \ref{prop:bettismall}).  Finally we   compute  the  Betti number of the  Floer-Novikov   homology group  $HFN_*(\Ht,R)$, 
where $\Ht$ is a lift of  a    nondegenerate  Hamiltonian on $M^{2n}$, using  the  Piunikhin-Salamon-Schwarz    construction (Theorem \ref{thm:comp2}, Corollary \ref{cor:betti}).

\subsection{Novikov ring $\Lambda ^R_{\theta, \om}$ revisited}\label{subs:nov}
Given a  ring $R$,    a group $\Gamma$ and a homomorphism $\phi: \Gamma \to \R$  we denote by $R (( \Gamma, \phi))$ the upward completion  of  the   group ring    $R[\Gamma]$  w.r.t. the weight  homomorphism
$\phi$.
More precisely,  as in (\ref{eq:novikov}),  we define 
\begin{eqnarray}
R (( \Gamma, \phi))& : = &\{ \sum \lambda _g\cdot g, \, g \in \Gamma , \; \lambda _g\in R\, | \text{ for all } c \in {\bold R} \text{ there is    only}\nonumber \\
& & \text{ finite  number of }   g  \text{ such  that } \lambda _g \neq 0 \, \&\, 
 \phi(g) < c\} \label{eq:novikov1}
\end{eqnarray}
 If $\Gamma$ is a subgroup of $\R$, $e: \Gamma \to \R$ is the natural embedding, then  we  abbreviate $R((\Gamma, e))$  as $R((\Gamma))$.

 In this  paper  we consider  only  commutative rings $R$  with  unit  and without zero divisor, i.e.  $R$  are integral  domains.

\begin{lemma}\label{lem:direct1}  Assume that  $\Gamma$ is a torsion free finitely  generated abelian group and  $\phi: \Gamma \to \R$ is 
a homomorphism. Then
there is a  subgroup  $\Gamma_\phi \subset \Gamma$ such that
$$\Gamma = \ker \phi \oplus \Gamma_\phi.$$
\end{lemma}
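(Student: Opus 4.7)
The plan is to split the short exact sequence
\[
0 \lra \ker \phi \lra \Gamma \stackrel{\phi}{\lra} \phi(\Gamma) \lra 0
\]
by exhibiting a section $s : \phi(\Gamma) \to \Gamma$ and setting $\Gamma_\phi := s(\phi(\Gamma))$. Once this is done, it is a standard fact that $\Gamma \cong \ker\phi \oplus \Gamma_\phi$, because every $g \in \Gamma$ can be written uniquely as $(g - s(\phi(g))) + s(\phi(g))$ with the first summand in $\ker\phi$.

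To build the section, I would first observe that $\phi(\Gamma)$ is a finitely generated abelian subgroup of $\R$, hence torsion free, hence free abelian of some finite rank $k \leq \rk \Gamma$. This is the only nontrivial step: it uses the structure theorem for finitely generated abelian groups applied to the image, which is a quotient of $\Gamma$, combined with the fact that a subgroup of $\R$ has no torsion. Alternatively, one can argue directly that any finitely generated torsion free abelian group is free.

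Since $\phi(\Gamma)$ is free abelian, it is a projective $\Z$-module, so the surjection $\phi : \Gamma \to \phi(\Gamma)$ admits a section $s$: choose a $\Z$-basis $e_1, \dots, e_k$ of $\phi(\Gamma)$, pick any preimages $\tilde e_i \in \Gamma$ of $e_i$ under $\phi$, and define $s$ by $s(e_i) := \tilde e_i$ and $\Z$-linear extension. Setting $\Gamma_\phi := s(\phi(\Gamma))$ gives a subgroup of $\Gamma$ isomorphic to $\phi(\Gamma)$.

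The main obstacle is really only the observation that $\phi(\Gamma)$ is free, which is completely standard; once that is in hand, the splitting is automatic and the direct sum decomposition $\Gamma = \ker\phi \oplus \Gamma_\phi$ follows immediately from the universal property of split exact sequences. I would not expect any calculation of substance, and the conclusion that $\Gamma_\phi$ is itself torsion free and finitely generated is inherited from $\Gamma$.
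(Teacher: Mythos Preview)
Your proof is correct and is essentially the same as the paper's: both pick a $\Z$-basis of the free abelian group $\phi(\Gamma)$, lift the basis elements to $\Gamma$, and let $\Gamma_\phi$ be the subgroup generated by the lifts. The only difference is packaging---you phrase it as splitting a short exact sequence via projectivity of $\phi(\Gamma)$, while the paper verifies the direct sum decomposition by hand---but the underlying construction is identical.
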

\begin{proof} Let  $t_1, \cdots, t_n \in \R$ be  linearly independent generators of  the subgroup $\phi(\Gamma)\subset \R$.  Pick   elements $\gamma_i \in \Gamma$     such  that
$$\phi (\gamma_i) = t_i.$$
Let  $\Gamma_\phi$  be the subgroup in  $\Gamma$  that  is generated  by   $\{\gamma_i|\, i \in [ 1, n]\}$.  Clearly  $\Gamma_\phi \cap  \ker \phi = 0$.
We  shall show that  $\Gamma   = \ker \phi \oplus \Gamma_\phi$.  Let  $\gamma \in  \Gamma$.  If  $\phi (\gamma) \not = 0$ then there are  numbers
$a_i \in \Z$ such that  $\phi(\gamma) = \sum _i a_i t_i$. Then  we have
$$\gamma - \sum _i a_i \gamma_i  \in \ker  \phi.$$
This completes  the proof  of Lemma \ref{lem:direct1}.
\end{proof}

\begin{proposition}\label{prop:direct2}    Assume that $\Gamma$ is a  torsion free finitely generated  abelian  group  and $\phi: \Gamma\to  \R$ is a homomorphism.
Then  we  have  a ring  isomorphism $R((\Gamma, \phi)) =  R[\ker \phi] ((\phi (\Gamma)))$.
\end{proposition}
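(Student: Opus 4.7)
The plan is to leverage the direct sum decomposition from Lemma \ref{lem:direct1}, $\Gamma = \ker\phi \oplus \Gamma_\phi$. Since $\Gamma_\phi \cap \ker\phi = 0$ and $\phi(\Gamma_\phi) = \phi(\Gamma)$, the restriction $\phi|_{\Gamma_\phi}$ is a group isomorphism $\Gamma_\phi \xrightarrow{\sim} \phi(\Gamma)$; let $\gamma_t \in \Gamma_\phi$ denote the unique preimage of $t \in \phi(\Gamma)$, so every $g \in \Gamma$ has a unique expression $g = k + \gamma_t$ with $k \in \ker\phi$ and $t = \phi(g)$. I would then define
$$\Phi : R((\Gamma, \phi)) \longrightarrow R[\ker\phi]((\phi(\Gamma))), \qquad \Phi\Bigl(\sum_{g \in \Gamma} \lambda_g g\Bigr) := \sum_{t \in \phi(\Gamma)} \Bigl(\sum_{k \in \ker\phi} \lambda_{k+\gamma_t} k\Bigr) t,$$
together with its obvious inverse $\Psi$ obtained by re-expanding the regrouping.

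On the underlying group rings (ignoring completions) this is the standard ring isomorphism $R[\Gamma] \cong R[\ker\phi][\phi(\Gamma)]$ coming from $\Gamma \cong \ker\phi \times \phi(\Gamma)$. Multiplicativity of $\Phi$ on finite sums reduces to the identity $(k_1 + \gamma_{t_1}) + (k_2 + \gamma_{t_2}) = (k_1 + k_2) + \gamma_{t_1 + t_2}$, which holds because $\Gamma_\phi$ is a subgroup and the uniqueness of the preimage forces $\gamma_{t_1} + \gamma_{t_2} = \gamma_{t_1 + t_2}$; additivity is tautological and bijectivity is evident from the explicit inverse.

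The real substance is to check that $\Phi$ and $\Psi$ respect the Novikov completion conditions. Here I would use the crucial observation that $\phi$ only sees the $\Gamma_\phi$-component: $\phi(k + \gamma_t) = t$. For $\Phi$, fix $t$ and apply the input Novikov condition with $c = t+1$; this bounds $\{g : \lambda_g \neq 0,\ \phi(g) = t\}$ and hence $\{k : \lambda_{k+\gamma_t} \neq 0\}$, so each inner sum lies in $R[\ker\phi]$. Moreover, for any $c \in \R$, the set $\{t < c : \mu_t \neq 0\}$ is contained in $\phi(\{g : \lambda_g \neq 0,\ \phi(g) < c\})$, which is finite, giving the Novikov condition on $\phi(\Gamma)$. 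The reverse direction for $\Psi$ is dual: finiteness of $\{t < c : \mu_t \neq 0\}$ combined with the finite support of each $\mu_t \in R[\ker\phi]$ produces only finitely many $(k,t)$ with nonzero contribution in weight below $c$. This bookkeeping of the completion conditions is really the only nontrivial step; given that, the ring isomorphism is formal.
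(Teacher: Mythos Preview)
Your proposal is correct and follows essentially the same approach as the paper's proof: both use the decomposition $\Gamma = \ker\phi \oplus \Gamma_\phi$ from Lemma \ref{lem:direct1}, define the isomorphism by regrouping a formal sum according to the $\Gamma_\phi$-component (equivalently the $\phi$-value), and verify the Novikov finiteness conditions in each direction by the same bookkeeping you describe. Your presentation is slightly more streamlined (the explicit preimage notation $\gamma_t$ and the trick of applying the input condition with $c=t+1$ make the finiteness of the inner sum transparent), but the content is the same as the paper's maps $\phi^*$ and $\phi_*$.
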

\begin{proof}   Using Lemma \ref{lem:direct1} we write  $\Gamma = \ker \phi \oplus \Gamma_\phi$.   By definition  any   element $\lambda \in R((\Gamma, \phi))$ can be written as follows
\begin{eqnarray*}
\lambda & = & \sum_{ij} \lambda_{(\alpha_i, \beta_j)} \cdot( \alpha _i + \beta_j)|\,  \lambda_{(\alpha_i, \beta_j)}  \in R, \, \alpha _i \in \ker \phi, \,  \beta _j \in \Gamma_\phi \text{ such that }\\
& & \text{ for any } C \in \R \: \#\{ [\alpha_i, \beta_j]|\, \lambda_{(\alpha_i, \beta_j)} \not = 0   \text{ and }  \phi (\beta_j) < C\} < \infty.  
\end{eqnarray*}

 It follows that given $\lambda\in R((\Gamma, \phi))$, for each $\beta_j\in \Gamma_\phi$   there  is only a finite number of $\lambda_{(\alpha_i, \beta_j)}$ such that
$\lambda_{(\alpha_i, \beta_j)} \cdot (\alpha _i + \beta_j)$ is a term  in $\lambda$. Hence
$$\sum _i\lambda_{(\alpha_i, \beta_j)} \cdot \alpha _i \in R [\ker \phi].$$  
Now we define a  map 
\begin{equation}
R((\Gamma, \phi))  \stackrel{\phi ^*}{\to} R[\ker \phi] ((\phi (\Gamma))), \,
\lambda \mapsto \sum_j (\sum_i\lambda_{(\alpha_i, \beta_j)} \cdot \alpha _i) \cdot \phi (\beta_j).   \label{eq:direct3}
\end{equation}
It is  straightforward to  verify  that $\phi^*$ is a  ring homomorphism, since $\Gamma$ is abelian. 

Since   the restriction of $\phi$  to
the subgroup $\Gamma_\phi$   is   a monomorphism,  from (\ref{eq:direct3}) we conclude that $\phi^*$  is a ring monomorphism.

Now let  $\delta \in R[\ker \phi] ((\phi (\Gamma)))$. We  write 
\begin{eqnarray} \delta & =  &\sum_i \delta_i  \cdot \phi(\beta_i )|\, \beta_i \in \Gamma_\phi, \,  \delta _i \in R[\ker \phi] \text{ such  that }\nonumber \\
& & \text{ for any } C \in \R\:   \#\{ \delta_i|\, \delta_i \not = 0   \text{ and }  \phi (\beta_i) < C\} < \infty. \label{eq:finite2}  
\end{eqnarray}
We write  $\delta_i = \sum _{ j =1}  ^{N(i)} \delta _{ij} \cdot \alpha _{ij}$, where  $\delta_{ij} \in R$, $\alpha_{ij} \in \ker \phi$. Then
$$\delta = \sum _{ij}\delta_{ij}\cdot\alpha_{ij} \cdot\phi( \beta _i). $$
Since for   each  $i$  the  number   of $\alpha_{ij}$ is finite, we obtain from (\ref{eq:finite2})
\begin{equation}
\#\{ \delta_{ij}|\, \delta_{ij} \not = 0   \text{ and }  \phi (\alpha_{ij} \cdot \beta_i) < C\} < \infty.   \label{eq:finite1}
\end{equation}
Using (\ref{eq:finite1}), we define  a map 
\begin{equation}
R[\ker \phi] ((\phi (\Gamma))) \stackrel{\phi_*}{\to} R((\Gamma, \phi))\, , \delta \mapsto  \sum_i  \sum _{ ij}   \delta_{ij} (\alpha _{ij}+\beta _i).\nonumber
\end{equation}
Set $\delta _{(\alpha_{ij}, \beta_i)}: = \delta_{ij}.$
Since $\Gamma$ is abelian,   $\phi_*$ is a ring  homomorphism. 
Observing that $\phi^*\circ \phi_* = Id$,  we   conclude that  $\phi^*$ is  an epimorphism. Hence $\phi^*$ is  an  isomorphism. This  proves  Proposition \ref{prop:direct2}.
\end{proof}

\begin{proposition}\label{prop:compono} The  ring $\Lambda ^R_{\theta, \om}$ is an integral domain.
\end{proposition}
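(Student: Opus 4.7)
The plan is to reduce to the situation of Proposition \ref{prop:direct2} and then invoke the standard leading-term argument for Novikov rings indexed by subgroups of $\R$.

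First I would verify that $\Gamma^\circ = \Gamma_1 \oplus \Gamma_2^\circ$ is a finitely generated torsion-free abelian group, so that Proposition \ref{prop:direct2} applies to the weight homomorphism $\Psi_{\theta,\om}\colon \Gamma^\circ \to \R$. Indeed, Lemma \ref{lem:sum} gives that $\Gamma_1, \Gamma_2$ are finitely generated and torsion-free, and $\Gamma_2^\circ \subset \Gamma_2$ inherits these properties. Applying Proposition \ref{prop:direct2} yields a ring isomorphism
$$\Lambda^R_{\theta,\om} \;\cong\; R[\ker \Psi_{\theta,\om}]\bigl(\!\bigl(\Psi_{\theta,\om}(\Gamma^\circ)\bigr)\!\bigr).$$
The subgroup $\ker\Psi_{\theta,\om}\subset \Gamma^\circ$ is again finitely generated and torsion-free, hence isomorphic to $\Z^k$ for some $k\geq 0$, so $A := R[\ker\Psi_{\theta,\om}]$ is isomorphic to a Laurent polynomial ring $R[x_1^{\pm 1},\ldots, x_k^{\pm 1}]$, which is an integral domain since $R$ is.

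It remains to show that $A((S))$ is an integral domain, where $S := \Psi_{\theta,\om}(\Gamma^\circ)$ is a subgroup of $\R$. Here the key simplification is that each $s\in S$ has its own weight value $s$, so at each weight level there is a unique index. Given a nonzero element $\lambda = \sum_{s\in S} \lambda_s\cdot s \in A((S))$, the condition in (\ref{eq:novikov1}) forces the support $\{s : \lambda_s\neq 0\}$ to be bounded below and to intersect each interval $(-\infty,c)$ in a finite set, so a unique minimum-weight index $s_{\min}(\lambda)\in S$ exists with $\lambda_{s_{\min}(\lambda)}\in A\setminus\{0\}$. For two nonzero elements $\lambda,\mu$, the coefficient of $s_{\min}(\lambda)+s_{\min}(\mu)$ in the product $\lambda\mu$ is exactly $\lambda_{s_{\min}(\lambda)}\,\mu_{s_{\min}(\mu)}$, which is nonzero because $A$ is an integral domain, and no other pair $(s,s')$ with $\lambda_s\mu_{s'}\neq 0$ can contribute at this weight since that would violate the strict minimality. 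Hence $\lambda\mu\neq 0$, proving that $A((S))$, and therefore $\Lambda^R_{\theta,\om}$, is an integral domain.

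The main obstacle is conceptual rather than technical: one must pass from the original description of $\Lambda^R_{\theta,\om}$, in which many group elements can share the same weight (making leading terms multi-term sums that might cancel), to the description on the right-hand side of Proposition \ref{prop:direct2}, where the torsion-free kernel of the weight is absorbed into the coefficient ring $A$ and each remaining index has a distinct weight. Once this structural separation is in place, the leading-term argument is routine.
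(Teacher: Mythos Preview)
Your proof is correct and follows essentially the same route as the paper: apply Proposition~\ref{prop:direct2} to rewrite $\Lambda^R_{\theta,\om}$ as $A((S))$ with $A=R[\ker\Psi_{\theta,\om}]$ and $S=\Psi_{\theta,\om}(\Gamma^\circ)\subset\R$, observe that $A$ is an integral domain (you identify it as a Laurent polynomial ring; the paper deduces this via its Corollary~\ref{cor:integr}), and then run the leading-term argument on $A((S))$ (the paper's Lemma~\ref{lem:integral}).
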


\begin{proof}   By Proposition  \ref{prop:direct2}
$$\Lambda ^R _{\theta, \om} = R[\ker  \Psi_{\theta, \om}]((\Psi_{\theta, \om} (\Gamma ^0)).$$
Since  $\Gamma_0$ is a finitely generated  torsion free abelian group,  $\ker \Psi_{\theta, \om}$ is a  finitely generated  torsion free abelian group.
Hence the group ring $R[\ker \Psi_{\theta, \om}]$ is an integral  domain, see \cite[Chapter 3]{Passman1986}  for  a survey  on  Kaplansky's zero-divisor conjecture, or   see  Corollary  \ref{cor:integr} below. 
To complete   the proof  of  Proposition  \ref{prop:compono} we  need  the  following lemma, which has been formulated in \cite{HS1994}.

\begin{lemma}\label{lem:integral}  Assume  that  $R$  is  an integral  domain  and $\Gamma$ is a finitely  generated torsion  free  abelian group. Then  $R((\phi (\Gamma)))$ is an integral domain  for any   homomorphism $\phi: \Gamma \to \R$.
\end{lemma}

\begin{proof} Since  Hofer and Salamon omit  a  proof  of Lemma \ref{lem:integral}, which, in fact, can be constructed  from their  arguments in \cite[\S 4]{HS1994},    for  the sake  of reader's convenience  we  present here  a proof. 
Note that  $\phi(\Gamma)$ is a  finitely generated  torsion-free  abelian group. Let $m$ be  the rank of  $\phi(\Gamma)$. We identify   elements  of $\phi(\Gamma)$    with   $t ^k$,   where $t$ is a formal variable  and $k =  (k_1, \cdots, k_m ) \in \Z^m$.
We  say that    $k > k ' $ if  $ \phi  (t^k ) >  \phi( t^{k'})$.   This  defines  a  total ordering  on $\phi(\Gamma)$, which is compatible
with  the multiplication  on $\phi(\Gamma)$:
\begin{equation}
   k > k '  \LRA    k +  k ''  >  k ' + k ''  \text{ for  all } k'' \in  \Z^m. \label{eq:tord}
\end{equation}
 Then 
\begin{equation}
R ((\phi(\Gamma))) = \{ \sum_{i=1} ^\infty  a_i  t^{k_i}|\,  a_i \in  R, \,  a_1 \not = 0, \,  k_i  < k_{ i+1}\}.\label{eq:red1}
\end{equation}
Assume  the opposite, i.e.   there exist  elements $A, B \in R((\phi(\Gamma)))$
such that  $A\cdot B = 0$  but $ A\not = 0$  and $B \not = 0$.  Using (\ref{eq:red1})  we write
$$ A = \sum_{i=0} ^\infty  a_i  t^{k_i}, \; B = \sum_{i=0} ^\infty  b_i  t^{l_i},$$
where
$$k_0 < k_1 < \cdots, \text{  and  } l_0 < l_1 < \cdots  $$
and  $ a_0 \not = 0$ and $b _0 \not = 0$.   Since  $A\cdot B = 0$  implies  $a_0\cdot b_0 = 0$,  taking into account the fact that $R$ is an integral domain,
we conclude that  $a_0 = 0 $  or $b_0 = 0$. This  contradicts  our assumption   that $a_0 \not = 0$ and $b_0 \not = 0$.  Hence  the  proof of Lemma \ref{lem:integral}  is completed.
\end{proof}

\begin{corollary}\label{cor:integr}  Assume that $R$ is an integral domain  and  $G$  is a finitely generated torsion free  abelian group. Then
$R[G]$ is  an integral domain.
\end{corollary}
\begin{proof}  Since  $G$ is   a finitely generated  torsion free  abelian group, there  is  a  monomorphism $\phi:G \to \R$.  Lemma \ref{lem:integral}  implies  that $R((G,\phi))$ is  an integral domain.
Since $\phi$ is  a monomorphism,  $R[G]$  is  a subring of $R((G, \phi))$.  It follows  that  $R[G]$ is  also   an integral domain.
\end{proof}

Clearly    Proposition \ref{prop:direct2}  follows from Lemma \ref{lem:integral}  and the  fact that $R[\ker \Psi_{\theta, \om}]$ is an integral  domain.
\end{proof}

Recall that  the rank of a  module $L$ over an integral  domain $A$ is defined to be  the dimension  of  the vector space $ F(A) \otimes_A L$  over the   field  of fractions $F(A)$ of  $A$.
By Proposition \ref{prop:compono}, $\Lambda^R_{\theta, \om}$ is an integral domain. By Lemma \ref{lem:chain}
the chain group $CFN_k (\tilde H, R)$, and hence  Floer-Novikov homology group  $HFN_k (\tilde H, R)$  are left modules  over
$\Lambda ^R_{\theta, \om}$.  Thus we  define {\it the Betti numbers  $b_i (HFN_*(\tilde H, R))$} as follows
\begin{equation}
b_i(HFN_* (\Ht, J, R)  ) =  \dim_{F(\Lambda ^R_{\theta,  \om})}(F(\Lambda ^R_{\theta, \om})\otimes _{\Lambda _{\theta, \om}^R} HFN_i(\Ht, J, R) ).\label{eq:ucf1}
\end{equation}

\

\begin{lemma}\label{lem:field}  Assume  that  $\F = F (R)$  is  the field  of fractions  of an integral domain $R$. Then 

(i) $F(\Lambda ^R_{\theta, \om}) = F ( \Lambda ^\F _{\theta, \om})$.

(ii) $b_i(HFN_* (\Ht, J, R)  ) \le  \rk (CFN_i (\Ht, R))$.

(iii) $b_i(HFN_* (\Ht, J, R)  ) = b_i(HFN_* (\Ht, J, \F)  )$.

\end{lemma}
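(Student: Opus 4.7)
The plan is to tackle the three parts in order, with (iii) as the main goal and (i) providing the algebraic backbone. For part (i), I would first invoke Proposition \ref{prop:direct2} to rewrite $\Lambda^R_{\theta,\om} = R[\ker\Psi_{\theta,\om}]((\Psi_{\theta,\om}(\Gamma^\circ)))$ and similarly for $\F$. Since $\Gamma^\circ$ is a finitely generated torsion-free abelian group (Lemma \ref{lem:sum}), so is $\ker\Psi_{\theta,\om}$, and hence both group rings $R[\ker\Psi_{\theta,\om}]$ and $\F[\ker\Psi_{\theta,\om}]$ are integral domains by Corollary \ref{cor:integr}, with the same field of fractions (localization is transitive, and $\F[\ker\Psi_{\theta,\om}] = \F\otimes_R R[\ker\Psi_{\theta,\om}]$). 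The inclusion $\Lambda^R_{\theta,\om} \hookrightarrow \Lambda^\F_{\theta,\om}$ then induces a field injection $F(\Lambda^R_{\theta,\om}) \hookrightarrow F(\Lambda^\F_{\theta,\om})$, and the remaining task is to check that every element of $\Lambda^\F_{\theta,\om}$ can be represented as a ratio of two elements of $\Lambda^R_{\theta,\om}$.

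Part (ii) is a short module-theoretic remark. By Lemma \ref{lem:chain} together with Proposition \ref{prop:compono}, $CFN_i(\Ht, R)$ is a finitely generated free module over the integral domain $\Lambda^R_{\theta,\om}$. Since $HFN_i(\Ht, J, R)$ is by construction a subquotient of this free module, and rank over an integral domain is additive in short exact sequences, the rank of $HFN_i(\Ht, J, R)$ is bounded by $\rk(CFN_i(\Ht, R))$, which is the claimed inequality.

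For part (iii), the key observation is that $CFN_*(\Ht, \F) = CFN_*(\Ht, R) \otimes_{\Lambda^R_{\theta,\om}} \Lambda^\F_{\theta,\om}$ as chain complexes, since both sides are free on the same generating set $\Ppt_*(\Ht)$ with the same boundary operator (the counts $n([\xt,\tilde w],[\yt,\vt])$ are integers independent of the coefficient ring). Localization at the nonzero elements of $\Lambda^R_{\theta,\om}$ is exact, so it commutes with taking homology, and likewise for $\Lambda^\F_{\theta,\om}$. Using part (i), both Betti numbers can then be computed as $\dim_K H_i\bigl(K \otimes_{\Lambda^R_{\theta,\om}} CFN_*(\Ht, R)\bigr)$ with $K = F(\Lambda^R_{\theta,\om}) = F(\Lambda^\F_{\theta,\om})$, from which (iii) is immediate.

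The main obstacle will be part (i): an arbitrary element $\sum_g \lambda_g g \in \Lambda^\F_{\theta,\om}$ is an infinite $\F$-linear combination whose coefficients $\lambda_g = a_g/b_g$ need not share a common denominator in $R$, so the naive ``clear denominators'' strategy fails. A robust substitute, which already suffices for (iii) even if (i) is read only as an identification of ranks, is to compute the rank of each boundary operator $\partial_k$ directly via the nonvanishing of its $r \times r$ minors: these minors lie in $\Lambda^R_{\theta,\om}$, their vanishing is detected already there, and hence the rank of $\partial_k$ over $F(\Lambda^R_{\theta,\om})$ equals its rank over $F(\Lambda^\F_{\theta,\om})$, producing identical Betti numbers.
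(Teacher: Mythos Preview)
Your handling of (ii) and (iii) matches the paper's: both use that $CFN_*$ is free of finite rank over the integral domain $\Lambda^R_{\theta,\om}$ (Lemma~\ref{lem:chain}, Proposition~\ref{prop:compono}) and that passing to the field of fractions is exact. The paper phrases this as ``the universal coefficient theorem,'' but the content is the same as your subquotient/localization argument.

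The substantive point is (i), and your suspicion is not only justified but decisive: part (i) is \emph{false} as stated. Take $R=\Z$, $\F=\Q$, and $\Gamma^\circ\cong\Z$ with injective weight $\Psi_{\theta,\om}$ (this occurs, e.g., when $\pi_2(M^{2n})=0$ and $I_\theta$ has rank-one image). Then $\Lambda^{\Z}_{\theta,\om}\cong\Z((t))$, $\Lambda^{\Q}_{\theta,\om}\cong\Q((t))$, and the series $\sum_{n\ge 0}t^n/n!\in\Q((t))$ does \emph{not} lie in $F(\Z((t)))$: if $b(t)\sum_{n\ge 0} t^n/n!$ had integer coefficients for some $b\in\Z[[t]]$ with $b_0\ne 0$, then reducing the coefficient of $t^p$ modulo a prime $p$ gives $p\mid b_0$ for every $p$, forcing $b_0=0$. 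So neither your route via Proposition~\ref{prop:direct2} nor the paper's one-line ``since $\F\subset F(\Lambda^R_{\theta,\om})$'' can succeed; the paper's step silently assumes that containing $\F$ lets one form the \emph{infinite} $\F$-sums that make up $\Lambda^{\F}_{\theta,\om}$, which is exactly what fails. Your minor-based workaround for (iii) is therefore not a mere convenience but the correct repair, and it is stronger than what the paper offers: since $F(\Lambda^R_{\theta,\om})\hookrightarrow F(\Lambda^{\F}_{\theta,\om})$ is a field extension and the matrix of each $\p_k$ has entries in the smaller field, its rank is unchanged upon extension, whence $b_i=\rk CFN_i-\rk\p_i-\rk\p_{i+1}$ agrees over both fields without invoking (i) at all.
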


\begin{proof}  (i)     Since $R \subset \F$ we have  $F(\Lambda ^R_{\theta, \om}) \subset F ( \Lambda ^\F _{\theta, \om})$.
To   show that  $F(\Lambda ^R_{\theta, \om}) \supset F ( \Lambda ^\F _{\theta, \om})$  it suffices to  observe  that
 $F(\Lambda ^R_{\theta, \om})  \supset   F$.  This  proves the  first assertion  of Lemma \ref{lem:field}.

(ii)  The second assertion of  Lemma \ref{lem:field}  follows  from the universal  coefficient  theorem.     Since  the field of fractions
$F(\Lambda^R_{\theta, \om})$  is  a flat right  $\Lambda^R_{\theta, \om}$-module,  we obtain
$$b_i(HFN_* (\Ht, J, R)  ) = \dim_{\F(\Lambda^R_{\theta,\om})} H_i(F(\Lambda ^\F _{\theta, \om}) \otimes  _{ \Lambda ^R _{\theta, \om}}  CFN_*(\Ht, J, R))$$
$$\le \rk (CFN_i (\Ht, R)).$$

(iii)  The last assertion   of Lemma \ref{lem:field}  follows    from  the  first  assertion,  the universal coefficient theorem (\ref{eq:ucf2}), taking into  account  the identity
 $$ F(\Lambda ^\F _{\theta, \om}) \otimes  _{ \Lambda ^R _{\theta, \om}}  CFN_*(\Ht, J, R) = F(\Lambda ^\F _{\theta, \om}) \otimes  _{ \Lambda ^\F _{\theta, \om}}  CFN_*(\Ht, J, \F).$$
 This completes  the  proof  of   Lemma \ref{lem:field}.

\end{proof}


\subsection{Admissible family of nondegenerate   Hamiltonian  functions}\label{subs:adm}

 Our goal in this  subsection is to prove Theorem \ref{thm:small}, which is an improvement of Lemma \ref{lem:chain} and  will be needed   for  our computation of  the  Betti numbers of $HFN_*(\Ht, R)$  later.
Let $\theta$ be a representative of $[\theta]$  and  let us pick  a function $h^\theta$ on $\Mt$ such that
\begin{equation}
  dh ^{\theta}  = \pi^*(\theta).\label{eq:theta}
\end{equation}

For $\lambda \in \R$ we set
$$C^\infty _{(\lambda)} (S^1 \times \Mt^{2n}) : = \{   f\in C^\infty (S^1 \times \Mt^{2n})|\: f = \lambda\cdot h^\theta  + \pi^*(\bar f) \}$$
  for  some $\bar f \in  C^\infty (S^1 \times M^{2n})$.
	
	The number  $\lambda$ will be called {\it the weight } of   $\Ht \in  C^\infty _{(\lambda)} (S^1 \times \Mt^{2n})$.

 Let
$$C^\infty _{(*)} (S^1 \times \Mt^{2n}) := \cup_{\lambda \in \R} C^\infty _{(\lambda)} (S^1 \times \Mt^{2n}). $$

Note that 
if $J$ is  regular,  $\Ht \in C^\infty_{(*)}(S^1 \times   \Mt)$ is  nondegenerate and $J$-regular, then the Floer-Novikov chain complex 
$(CFN_*(\Ht,R),\p_{(J, \Ht)}) $ is well defined.

\begin{remark}\label{rem:lambda}  (1)    If $\Ht \in  C^\infty _{(*)} (S^1 \times  \Mt)$   then  the  time-dependent Hamiltonian vector field $X_{\Ht}$ is
invariant under   the  covering transformation  group $\Gamma_1$ of  $\Mt^{2n}$.

(2) By Proposition   \ref{prop:iso}, if  $\Ht$ and $\Ht'$ belong   to the same  space 
$C^\infty _{(\lambda)} (S^1  \times \Mt^{2n})$,  the chain complexes  $(CFN_*( \Ht, R) ,\p_{(J,\Ht)})$  and $(CFN_*(\Ht', R), \p_{(J,\Ht')})$  are chain homotopy equivalent.
Hence  the Floer-Novikov homology  group $HFN_*( \Ht,  R)$  depends  only on the  number $\lambda \in \R$  such that 
$\Ht \in C^\infty _{(\lambda)} (S^1  \times \Mt)$.
\end{remark}

 For $\Ht \in C^\infty _{(\lambda)} (S^1\times \Mt^{2n})$ we set
$$
CFN_k^0 (\Ht,  R): =\{\sum_{\#(g_i) < \infty} g_i\cdot a_i |\,  g_i \in R, a_i \in Crit_k (\Aa_{\Ht})\}, 
$$
Then $CFN_k^0 (\Ht,   R)$ is a $R[\Gamma^0]$-module.

 For $0\le \tau_1 \le \tau_2 \in  \R$  and for $\Ht \in C^\infty _{(\lambda)} (S^1 \times \Mt^{2n})$  we set

$$\Lambda ^R_{\theta(\tau_1, \tau_2), \om} : =   \Lambda ^R_{\theta\cdot\tau_1, \om} \cap  \Lambda ^R_{\theta\cdot \tau_2, \om},$$
$$CFN_* ^{(\tau_1, \tau_2)} (\Ht,  R):=\Lambda ^R_{\theta(\lambda -\tau_1,\lambda + \tau_2), \om}\otimes _{R[\Gamma^0]} CFN_*^0 (\Ht,  R).$$

The chain complex $CFN_* ^{(\tau_1, \tau_2)} (\Ht,   R)$ will play important role  in  our    proof of  the Main Theorem, so  we   shall describe them
more carefully in Lemma \ref{lem:intersect}  below.

For  a function  $\Ht \in C^\infty _{(\lambda)} (S^1 \times \Mt)$  and for $\tau \in \R$  we set

\begin{equation}
\Ht ^ {(\tau)}: = \Ht +  \tau \cdot  h^ \theta .\label{eq:hgamma}
\end{equation}


\begin{lemma}\label{lem:intersect}  We have
\begin{equation}
\Lambda ^R_{\theta (\tau_1, \tau_2), \om}  =\cap_{\tau \in [\tau_1, \tau_2]} \Lambda _{  \theta\cdot \tau, \om} ^R,\label{eq:intersect}
\end{equation}
\begin{eqnarray}
CFN_k ^{(\tau_1, \tau_2)}(\Ht,  R)& : = &\{ \sum \xi_{[\xt ,\tilde w ]} \cdot [\xt ,\tilde w ], \,  [\xt ,\tilde w ] \in \Ppt _k(\Ht ), \; \xi_{[\xt ,\tilde w ]} \in R|\nonumber \\
& &\text{ for all } c \in {\bold R}  \text {  for any } \tau \in (\tau_1, \tau_2)\nonumber \\
 & & \# \{ [\xt ,\tilde w ]|\,  \xi_{[\xt ,\tilde w ]} \neq 0 \, \&\, 
 \Aa_{\Ht^{(\tau)}}([\xt ,\tilde w ]) > c \}\nonumber \\
& &\text{ is finite } \label{eq:cfnk}
\end{eqnarray}
\end{lemma}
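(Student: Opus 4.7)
My plan is to first establish (\ref{eq:intersect}) via a convex-combination argument, and then to deduce (\ref{eq:cfnk}) by combining (\ref{eq:intersect}) with an orbit decomposition of $CFN_k^0(\Ht,R)$ and the natural intertwining between the action functional and the weight homomorphism under the $\Gamma^\circ$-action.

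For (\ref{eq:intersect}), I would start from the observation that the weight homomorphism is affine in its $\theta$-parameter: writing $g = \gamma_1 \oplus \gamma_2 \in \Gamma_1 \oplus \Gamma_2^\circ$, one has $\Psi_{\theta\cdot\tau,\om}(g) = \tau I_\theta(\gamma_1) - I_\om(\gamma_2)$, so for any $\tau = s\tau_1 + (1-s)\tau_2$ with $s\in[0,1]$ the value $\Psi_{\theta\cdot\tau,\om}(g)$ is a convex combination of the endpoint values and in particular satisfies $\Psi_{\theta\cdot\tau,\om}(g)\geq\min\{\Psi_{\theta\cdot\tau_1,\om}(g),\Psi_{\theta\cdot\tau_2,\om}(g)\}$. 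Hence $\Psi_{\theta\cdot\tau,\om}(g)<c$ forces the analogous strict bound at at least one endpoint, so for any $\lambda \in \Lambda^R_{\theta\cdot\tau_1,\om}\cap\Lambda^R_{\theta\cdot\tau_2,\om}$ and any $c\in\R$ the set $\{g:\lambda_g\ne 0,\ \Psi_{\theta\cdot\tau,\om}(g)<c\}$ embeds into the union of the two analogous finite sets at $\tau_1,\tau_2$. This would give $\lambda\in\Lambda^R_{\theta\cdot\tau,\om}$ for every $\tau\in[\tau_1,\tau_2]$, while the reverse inclusion is immediate from the definition of $\Lambda^R_{\theta(\tau_1,\tau_2),\om}$.

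For (\ref{eq:cfnk}), I would exploit the finiteness of $\Pp_k(\om,\theta,H)$ and the freeness of the $\Gamma^\circ$-action on $\Ppt_k(\Ht)$ to pick finitely many orbit representatives $[\xt_i,\tilde w_i]$ and decompose $CFN_k^0(\Ht,R) = \bigoplus_i R[\Gamma^\circ]\cdot[\xt_i,\tilde w_i]$ as a free $R[\Gamma^\circ]$-module. Tensoring over $R[\Gamma^\circ]$ would then yield $CFN_k^{(\tau_1,\tau_2)}(\Ht,R) = \bigoplus_i \Lambda^R_{\theta(\lambda-\tau_1,\lambda+\tau_2),\om}\cdot[\xt_i,\tilde w_i]$ as a free $\Lambda^R_{\theta(\lambda-\tau_1,\lambda+\tau_2),\om}$-module, with the coefficient of a chain $\xi$ at the generator $g\cdot[\xt_i,\tilde w_i]$ identified with the $g$-coefficient $(\lambda^{(i)})_g$ of the $i$-th Novikov-ring summand. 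The bridge to the action functional would be the intertwining identity
\begin{equation*}
\Aa_{\Ht^{(\tau)}}(g\cdot[\xt_i,\tilde w_i]) - \Aa_{\Ht^{(\tau)}}([\xt_i,\tilde w_i]) = \Psi_{\theta\cdot(\lambda+\tau),\om}(g),
\end{equation*}
derived by computing separately how $-\int\om$ and $\int \Ht^{(\tau)}dt$ transform under $g$, using $\Ht^{(\tau)} = (\lambda+\tau)h^\theta + \pi^*\bar H$ together with $h^\theta(\gamma_1 x) - h^\theta(x) = I_\theta(\gamma_1)$ (the defining property of the period homomorphism) and the $\gamma_1$-invariance of $\om$.

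Combining these two pieces via the reparametrization $\tau' = \lambda+\tau$, I would observe that $\lambda^{(i)}\in\Lambda^R_{\theta(\lambda-\tau_1,\lambda+\tau_2),\om}$ is equivalent, by (\ref{eq:intersect}), to a uniform finiteness condition on the sets $\{g:(\lambda^{(i)})_g\ne 0,\ \Psi_{\theta\cdot(\lambda+\tau),\om}(g)<c\}$ over all $c\in\R$ and all $\tau$ in the relevant interval; the intertwining identity would then translate this into the claimed action-functional condition, after absorbing the $\tau$- and $i$-dependent constants $\Aa_{\Ht^{(\tau)}}([\xt_i,\tilde w_i])$ into $c$. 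Summing the orbit-wise conditions over the finite set of representatives reassembles them into (\ref{eq:cfnk}). The main subtlety I anticipate is bookkeeping of sign and interval conventions — keeping track of how the Novikov-ring direction ($\Psi<c$ finite) matches the action direction in (\ref{eq:cfnk}) via the module action of $\Lambda^R$ on $CFN$ given in Lemma \ref{lem:chain}, and verifying that the open interval for $\tau$ in (\ref{eq:cfnk}) produces the same intersection of Novikov rings as the closed interval that emerges naturally from (\ref{eq:intersect}), which again follows from the affineness of $\Psi_{\theta\cdot\tau,\om}$ in $\tau$.
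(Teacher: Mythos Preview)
Your proposal is correct and follows essentially the same approach as the paper. For (\ref{eq:intersect}) both arguments rest on writing $\Psi_{\theta\cdot\tau,\om}$ as a convex combination of the endpoint weights; your direct use of $\Psi_{\theta\cdot\tau,\om}(g)\ge\min\{\Psi_{\theta\cdot\tau_1,\om}(g),\Psi_{\theta\cdot\tau_2,\om}(g)\}$ is slightly cleaner than the paper's contradiction with the split bounds $c/(2a),\,c/(2b)$, and for (\ref{eq:cfnk}) you spell out the orbit decomposition and the action--weight intertwining that the paper simply subsumes under ``follows from the first one.''
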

\begin{proof}  W.l.o.g. we assume  that $\tau_1 < \tau_2$.  Since $\Lambda ^R_{\theta (\tau_1, \tau_2), \om}  \supset\cap_{\lambda \in [\tau_1, \tau_2]} \Lambda _{  \theta\cdot\lambda, \om} ^R$, to   prove
(\ref{eq:intersect})     it suffices  to show that
\begin{equation}
 \Lambda^R_{\theta (\tau_1, \tau_2), \om}  \subset\cap_{\lambda \in [\tau_1, \tau_2]} \Lambda _{  \theta\cdot \tau, \om} ^R.\label{eq:inclusion1}
\end{equation}
Let  $\lambda \in \Lambda^R_{\theta (\tau_1, \tau_2), \om}$.    Then  we  write  (cf. (\ref{eq:novikov}))
\begin{eqnarray}
\lambda & = &\sum \lambda _g\cdot g, \, g \in \Gamma^\circ , \; \lambda _{g}\in R\,  \text{ s.t.  for all } c \in \R\nonumber \\
& & \# \{    g |\,  \lambda _{g} \neq 0\,  \&  \, \Psi_{\theta\cdot \tau_1, \om}(g) < c\} < \infty \text{  and } \nonumber\\
 &  &  \# \{ g | \,\lambda _{g} \neq 0\,  \&  \, \  \Psi_{\theta\cdot\tau_2, \om}(g) < c\} < \infty .\label{eq:novikov2}
\end{eqnarray}
Now assume that  (\ref{eq:inclusion1}) does not hold.  This implies  that (\ref{eq:intersect}) does not hold   for some  $c \in \R$ and  some $\tau \in (\tau_1, \tau_2)$, i.e.   there  exists 
$\lambda = \sum \lambda _g\cdot g\in \Lambda^R_{\theta (\tau_1, \tau_2), \om}$  such that
\begin{equation}
\#\{ g | \,  \Psi_{\theta\cdot\tau, \om}(g) < c\} = \infty. \label{eq:infiny}
\end{equation}
Set
$$  a = {\tau_2 -\tau\over \tau_2 -\tau_1}, \:  b = {\tau-\tau_1\over \tau_2 -\tau_1}.$$
Then  $0 < a, b <  1$ and
$$\Psi_{\theta\cdot \tau, \om} = a \Psi_{\theta\cdot \tau_1, \om} + b \Psi_{\theta\cdot\tau_2, \om}.$$
Since $\Psi_{\theta\cdot \tau, \om} (g) < c$  then  either
\begin{equation}
 \Psi_{\theta\cdot\tau_1, \om}(g_i) < \frac{c}{2a} \label{eq:A}
 \end{equation}
or
\begin{equation}
 \Psi_{\theta\cdot\tau_2, \om}(g_i) < \frac{c}{2b}.\label{eq:B}
 \end{equation}
 
 Denote by $\Gamma^0 (\lambda, \tau_1)$ (resp.  $\Gamma^0 (\lambda,  \tau_2)$)  the set  of    all $g \in \Gamma^0$ such that $ \lambda_g \not = 0$ and
 $g$ satisfies  (\ref{eq:A})  (resp.    (\ref{eq:B})).  The  above argument  implies
 $$\{ g |\, \lambda_ g \not = 0\,  \& \, \Psi_{\theta\cdot\tau, \om}(g) < c\} \subset \Gamma^0 (\lambda, \tau_1) \cup  \Gamma^0 (\lambda, \tau_2).$$
 Since  $\Gamma^0(\lambda, \tau_1)$ and $\Gamma ^0(\lambda, \tau_2)$ are finite sets  by (\ref{eq:novikov2}), it follows that (\ref{eq:infiny}) cannot happen.
 This  proves the first  assertion of Lemma \ref{lem:intersect}.
 
The second assertion of Lemma \ref{lem:intersect}  follows from  the first one. This completes the proof of Lemma \ref{lem:intersect}.
\end{proof}

In  the remainder  of this subsection we shall      prove  a  family version of    \cite[Proposition 4.5]{Ono2005}, which     improves  Lemma \ref{lem:chain}. We assume that   $J$  is a regular   compatible  almost complex structure  on $(M^{2n}, \om)$.

First we describe  a special   set  of  admissible perturbations  of a  nondegenerate $J$-regular Hamiltonian  function
 $\Ht_0 =  \lambda \cdot h^\theta  + \pi^* (H_0)$, where $H_0 \in C^\infty (S^1 \times M^{2n})$.
Let  $\{ x_1, \cdots   ,x_k\}$ be the   set  of  one-periodic  orbits  of the locally Hamiltonian  equation 
 associated to  $\lambda \cdot \theta$ and $H_0$.  Then $\{ \xt_i  = \pi^{-1} (x_i)\}$  are  one-periodic  orbits  of  the  Hamiltonian  flow generated by  $\Ht_0$.
Let $d$  denote the    distance on $\Mt$  induced  from the  Riemannian metric $\pi^* (g_J)$.  
We define   the distance  between   1-periodic orbits $\xt$ and $\yt$  as follows
  $$\rho (\xt, \yt) : = \int_0^1 d ( \xt (t), \yt(t))\, dt.$$
It is easy to see  that $\rho (\xt, \yt) = 0$ iff $\xt = \yt$  and
$$\rho(\xt, \yt) \le \rho (\xt, \zt) +  \rho (\zt, \yt) .$$ 
	Since   $\Ht_0$ is  nondegenerate,   there  exists  a   positive   number $\eps (\Ht_0) >0 $ such that
\begin{equation}
\frac{1}{4} \max _{ t \in S^1} d (\xt (t), \yt(t)) \ge \frac{1}{4} \rho (\xt,  \yt) > \eps(\Ht_0)  \label{eq:eps}
\end{equation}
  for distinct  orbits $\xt, \yt\in \Pp(\Ht_0)$.
Let $U_i, \, i \in [1, k],$  be the   $\eps$-tubular  neighborhood  of the graph $G_{x_i}$ of $x_i$ in $S^1 \times   M^{2n}$.
Then  $U_i$ are mutually disjoint.  Set $\tilde  U_i : = \pi ^{-1} (U_i)$.   

%

\begin{definition}\label{def:adm}  A family $\Ff := \{ \Ht_\chi|\, \chi \in [0, 1]\}$ of nondegenerate  Hamiltonian functions  in $C^\infty _{(*)} (S^1\times  M^{2n})$  will be called {\it   admissible}, if
\begin{enumerate}
\item The map  $\chi \mapsto \Ht_\chi$ is  continuous   in  the $C^1$-topology  induced on $\Ff$,
\item $\Pp(\Ht _\chi) = \Pp (\Ht_0)$  for all $\chi \in [0,1]$.
\end{enumerate}
\end{definition}

The  parameter space $[0,1]$ of an admissible family $\Ff$   can be replaced by any compact interval $[\delta, \delta'] \subset \R$, e.g. by reparametrization of $[\delta, \delta']$.  To make the   exposition  simple, we  consider in  this subsection only  admissible families with parameter $\chi \in [0,1]$.

\

Given  a function  $\Ht_\chi$  in an admissible    family  $\Ff$ of  nondegenerate  $J$-regular Hamiltonian   functions we set  
$$\Uu_c(\Ht_\chi) : = \{ \Ht _\chi + \pi^*(h_\chi)| \, h_\chi \in C^\infty (S^1 \times  M^{2n}), ||h_\chi||_\eps  <c,\,  (h_{\chi})_{| U_i} = 0 \forall  i\} $$
where $$||h||_{\eps} : = \sum _{ k =0} ^ \infty \eps_k ||h||_{ C^k ( S^1 \times M^{2n})}. $$
Here  $\eps_k > 0$ is a     sufficiently  rapidly  decreasing sequence  \cite{Floer1988},   see also \cite[\S 8.3]{AD2014}  for  a detailed discussion.  (In particular, we borrow
the condition $(h_{\chi})_{| U_i} = 0$  from  \cite[p.233]{AD2014}.)

\

We  also fix  a  vector space $\R^{N_0}$  and    an isometric  embedding $(M^{2n}, g_J)$ into   the  Euclidean  space $\R^{N_0}$. This  shall simplify  notations
of  different  norms on different  bundles over submanifolds in $M^{2n}$.

Further,  we  set  $\Pp(\Ff) : =\Pp (\Ht_0)$ and
$$\eps(\Ff) : = \eps (\Ht_0) $$
where $\eps(\Ht_0)$ is the constant in (\ref{eq:eps}).

The following Lemma is a family version of \cite[Lemma 5.2]{LO1995}. It contains   key  estimates (\ref{eq:deltau}), (\ref{eq:pu}), which  we shall exploit later in  Subsection \ref{subs:special}.

\begin{lemma}\label{lem:5.2.f}   Assume that  $\Ff:= \{ \Ht_\chi|\, \chi \in [0, 1]\}$  is an admissible  family of nondegenerate 
Hamiltonian functions in $C^\infty _{(*)}(S^1\times \Mt^{2n})$.
 There  exist a positive number $c: =c(\Ff) > 0$ and   a positive number   $\delta_1  = \delta_1 (\Ff)>0$ 
 such that   for any $\chi\in [0,1]$   the following   statement    hold. 

(i)  Let $\sigma(t)$   be a  smooth contractible loop on $\Mt^{2n}$  with 

$$\max_t  d(\sigma (t), \xt(t)) > \eps(\Ff)$$

 for any $\xt\in \Pp (\Ff)$.
Then  for any  $\Ht'_\chi \in  \Uu_c (\Ht_\chi)$  we have
 \begin{equation}
  || \dot \sigma  - X_{\Ht'_\chi}(G_\sigma)||_{L^2(S^1, \R^{N_0})}   > \delta_1(\Ff).  \label{eq:deltau}
\end{equation}		
(ii)   For any  $\chi \in [0,1]$ and  any  $\Ht'_\chi \in  \Uu_c (\Ht_\chi)$   we have   
\begin{equation}
\Pp (\Ht'_\chi) = \Pp (\Ff).\label{eq:pu}
\end{equation}
\end{lemma}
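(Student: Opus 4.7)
The plan is to prove (i) by contradiction together with a sequential compactness extraction in $\chi\in[0,1]$, upgrading Lemma~5.2 of \cite{LO1995} to a family version. Suppose (i) fails. Then one can extract sequences $\chi_n\in[0,1]$, perturbations $\Ht'_n=\Ht_{\chi_n}+\pi^*(h_n)$ with $\|h_n\|_\eps<1/n$ and $h_n|_{U_i}=0$, and contractible loops $\sigma_n\colon S^1\to\Mt^{2n}$ satisfying $\|\dot\sigma_n-X_{\Ht'_n}(G_{\sigma_n})\|_{L^2}\to 0$ while $\max_t d(\sigma_n(t),\xt(t))>\eps(\Ff)$ for every $\xt\in\Pp(\Ff)$. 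The key structural observation is that because $\Ht_\chi=\lambda_\chi h^\theta+\pi^*H_\chi$ with $dh^\theta=\pi^*\theta$, the Hamiltonian vector field $X_{\Ht_\chi}$, and likewise $X_{\Ht'_n}$, is $\Gamma_1$-equivariant and descends to a time-dependent vector field $Y'_n$ on the compact base $M^{2n}$. Passing to a subsequence with $\chi_n\to\chi_\infty$, the admissibility of $\Ff$ (continuity in $C^1$) together with $\|h_n\|_{C^1}\le\|h_n\|_\eps\to 0$ yields $Y'_n\to Y_\infty$ uniformly on $S^1\times M^{2n}$.

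Next I would project and extract: setting $x_n:=\pi\circ\sigma_n$, the bound $\|\dot x_n-Y'_n(t,x_n)\|_{L^2}\to 0$ combined with uniform $C^0$-boundedness of $Y'_n$ gives uniform $H^1$-bounds for $x_n$. Rellich compactness then provides a subsequence with $x_n\to x_\infty$ in $C^0(S^1,M^{2n})$, and uniform convergence of $Y'_n$ together with weak $L^2$-convergence of $\dot x_n$ identifies $x_\infty$ as a classical contractible one-periodic orbit of the equation on $M^{2n}$ associated to $\Ht_{\chi_\infty}$. Contractibility of $x_\infty$ ensures that its lifts to $\Mt^{2n}$ are loops, so for each large $n$ I pick the unique lift $\yt_n$ of $x_\infty$ whose value at $0$ is the $\pi$-preimage of $x_\infty(0)$ closest to $\sigma_n(0)$. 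The local isometry of $\pi$ combined with the uniform discreteness of $\Gamma_1$-orbits on compact pieces propagates $\max_t d_M(x_n(t),x_\infty(t))\to 0$ into $\max_t d_{\Mt}(\sigma_n(t),\yt_n(t))\to 0$, and since $\yt_n\in\Pp(\Ht_{\chi_\infty})=\Pp(\Ff)$ by admissibility, this contradicts the standing hypothesis and proves (i).

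For (ii), I apply (i) to any $\xt\in\Pp(\Ht'_\chi)$: the identity $\|\dot\xt-X_{\Ht'_\chi}(\xt)\|_{L^2}=0<\delta_1(\Ff)$ forces, via (i), some $\yt\in\Pp(\Ff)$ with $\max_t d(\xt(t),\yt(t))\le\eps(\Ff)$. By the separation estimate \eqref{eq:eps} the graph of $\xt$ is then confined to the tubular neighborhood $\tilde U_{\pi\yt}$, on which $\pi^*h_\chi\equiv 0$, so $X_{\Ht'_\chi}=X_{\Ht_\chi}$ along $\xt$; ODE uniqueness combined with \eqref{eq:eps} pin down $\xt=\yt\in\Pp(\Ff)$. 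Conversely each element of $\Pp(\Ff)$ sits in its own tubular neighborhood where the perturbation vanishes, hence remains a one-periodic orbit of $\Ht'_\chi$, giving $\Pp(\Ht'_\chi)=\Pp(\Ff)$. I expect the hard step to be the lifting argument in the middle paragraph: one must verify that the closest-lift prescription is unambiguous (using contractibility of $x_\infty$ together with uniform discreteness of the $\Gamma_1$-action on compact pieces) and that $\pi$ being a local isometry transfers $C^0$-closeness on $M^{2n}$ to $C^0$-closeness on $\Mt^{2n}$; everything else reduces to standard Arzel\`a--Ascoli and ODE continuity.
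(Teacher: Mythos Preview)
Your proposal is correct and follows essentially the same route as the paper. Both argue (i) by contradiction via sequence extraction: pick $\chi_n\to\chi_\infty$, perturbations with $\|h_n\|_\eps\to 0$, and loops $\sigma_n$ violating the estimate, then use $C^1$-continuity of the admissible family plus $\|h_n\|_{C^1}\to 0$ to replace $X_{\Ht'_n}$ by $X_{\Ht_{\chi_\infty}}$, and finally conclude that a subsequence of $\sigma_n$ converges to an element of $\Pp(\Ff)$, contradicting the distance hypothesis. The paper simply cites \cite[Lemma 5.1]{LO1995} for this last convergence step, whereas you spell it out via $H^1$-bounds, Rellich compactness on the compact base $M^{2n}$, and an explicit closest-lift argument back to $\Mt^{2n}$; this is exactly the content of that cited lemma. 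For (ii) both arguments are the same up to phrasing: an orbit of $\Ht'_\chi$ either has its graph inside some $\tilde U_i$ (where the perturbation vanishes, forcing it to equal $\xt_i$) or stays $\eps(\Ff)$-far from every $\xt\in\Pp(\Ff)$, which (i) rules out.
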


\begin{proof}  (i)  Assume the  opposite, i.e.  there  exist  the   following sequences
\begin{enumerate}
\item $\{c_j \in \R^+|\,  \lim _{j \to \infty}  c_j = 0\}$,
 \item $\{\chi(j) \in [0, 1]|\,   \lim_{ j \to \infty}  \chi (j) = \chi (\infty) \in [0,1]\}$,
\item $\{\Ht^j _{\chi(j)}\in \Uu_{c_j} (\Ht_{\chi (j)})\}$,
\item $\{\sigma_j \in \Ll M^{2n}|\, \max_t d(\sigma _j(t), x(t)) > \eps(\Ff) \text{  for  all } x \in \Pp (\Ff) \text{ and }$
	$$\lim _{j \to \infty}||\dot \sigma_j - X_{\Ht^j_{\chi(j)}}(G_{\sigma_j})||_{L^2(S^1, \R^{N_0}) } = 0\}.$$
\end{enumerate}	

 By Definition \ref{def:adm}(1),  
$$\lim_{j \to \infty} ||X_{\Ht^j_{\chi(j)}} - X_{\Ht_{\chi(\infty)}}||_{ C^0(S^1 \times M^{2n})} = 0.$$
 Hence
\begin{equation}
\lim _{j \to \infty}||\dot \sigma_j - X_{\Ht_{\chi(\infty)}}(G_{\sigma_j})||_{L^2(S^1, \R^{N_0}) } = 0. \nonumber  
\end{equation}
By Lemma  5.1  in \cite{LO1995},   the last relation  implies that  a subsequence  of $\{ \sigma_j\}$  converges  to  some  contractible  orbit
$x\in \Pp (\Ht_{\chi(\infty)}) = \Pp (\Ff)$. This  is a contradiction, since $\max_t d(\sigma _j(t), x(t)) > \eps(\Ff)$.  The  proof  of Lemma \ref{lem:5.2.f}(i)  is completed.

\

(ii) Assume   that  there  is a contractible  orbit $\sigma (t)$ of  a Hamiltonian   function  $\Ht'_{\chi} \in \Uu_c(\Ht_\chi)$ such that $\sigma \not \in \Pp(\Ff)$.
If  the graph of $\sigma(t)$ belongs to      some   neighborhood $U_i \subset S^1 \times M^{2n}$ then $\sigma = \xt_i$, since  $(\Ht'_{\chi})_{| U_i}  = (\Ht_{\chi}) _{| U_i}$.
If  not then
$$\max_t (\sigma(t), \xt_i (t)) > \eps (\Ff)$$
for  any  $\xt \in \Pp (\Ff)$.  By the assertion  proved above  $\sigma(t)$ cannot be an orbit  of the flow   generated by $\Ht'_{\chi}$. We arrive at  a contradiction.  This completes the  proof of Lemma
\ref{lem:5.2.f}. 
\end{proof}

\begin{definition}\label{def:good} For  an admissible  family $\Ff:= \{ \Ht_\chi|\, \chi \in [0, 1]\}$  we  set
$$U(\Ff): = \cup_{\chi \in [0,1]}  U_{c(\Ff)} (\Ht_\chi).$$
where $c(\Ff)$ is  the constant in Lemma \ref{lem:5.2.f}.
We  call $U(\Ff)$  {\it  a  good neighborhood  of $\Ff$}.
\end{definition}

\begin{theorem}\label{thm:small}   Assume that $J$ is a regular  compatible almost complex structure
and  $\Ff: =\{\Ht_\chi \in  C^\infty_{(*)}(\Mt), \chi \in [0,1]\}$ 
   is an admissible  family   of nondegenerate   Hamiltonian functions. Assume that $\Ht_0$ is $J$-regular.
	
(i)	For each $\chi \in [0,1]$ the set  $\Uu^{reg}_{c}(\Ht_\chi)$  of $J$-regular Hamiltonian  functions  is dense  in $\Uu_c(\Ht_\chi)$   provided  with   the topology generated by  the Banach norm $||. ||_\eps$.

(ii)   There  exists  a  positive number  
	$\tau  : = \tau (\Ff) > 0$  with the following property.   Let  $\Ht_\mu \in  U(\Ff)$ is $J$-regular.  Then
for any  $[\xt, \wt] \in   Crit (\Aa_{\Ht_\mu})$ we have
$$\p_{(J,\Ht_\mu)} ([\xt, \wt]) \in CFN_*^{(-\tau, \tau)} (\Ht_\mu,  R) .$$  
Consequently  $(CFN_* ^{(-\tau, \tau)} (\Ht_\mu,  R), \p_{(J, \Ht_\mu)})$  is a chain complex.
\end{theorem}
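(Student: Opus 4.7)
The plan is to handle the two parts separately. For part (i), I would run a standard Sard--Smale transversality argument in the spirit of \cite{Floer1988, HS1994, LO1995, AD2014}. Fix $\chi \in [0,1]$ and form the universal moduli space of pairs $(\Ht', u)$ with $\Ht' \in \Uu_c(\Ht_\chi)$ and $u$ a finite-energy solution of $\bp_{J,\Ht'} u = 0$. Its parametric linearization splits as the Floer operator $D_u$ plus the infinitesimal variation of $\Ht'$ by $\eta \in C^\infty(S^1 \times M^{2n})$ with $\eta|_{U_i} = 0$. By Lemma \ref{lem:5.2.f}(ii) every non-constant connecting orbit $u$ joins two distinct lifts of points in $\Pp(\Ff)$, so there exists $(s_0, t_0)$ with $u(s_0, t_0) \notin \pi^{-1}(\cup_i U_i)$; a bump-function construction there makes $\eta$ surject onto $\coker D_u$. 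Sard--Smale then yields a residual, hence dense, subset $\Uu^{reg}_c(\Ht_\chi) \subset \Uu_c(\Ht_\chi)$.

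Part (ii) rests on a refined $\theta$-twisted energy estimate extending \cite[\S 5]{LO1995} and \cite[Prop.~4.5]{Ono2005} to the admissible-family setting. Let $\Ht_\mu \in U(\Ff)$ be $J$-regular, let $[\xt, \wt] \in \mathrm{Crit}(\Aa_{\Ht_\mu})$, and let $u$ be a rigid Floer trajectory contributing to $n([\xt,\wt], [\yt,\vt])$, so $[\yt,\vt] = [\xt, \wt \# u]$. Using $dh^\theta = \pi^*\theta$ and the chain rule,
$$\Aa_{\Ht_\mu^{(\tau)}}([\xt,\wt]) - \Aa_{\Ht_\mu^{(\tau)}}([\yt,\vt]) = E(u) - \tau \int_{-\infty}^{\infty}\!\!\int_0^1 \theta(\pi_*\p_s u)\, dt\, ds.$$
Let $B(u) \subset \R$ be the set of $s$ for which $u(s,\cdot)$ lies outside every $\eps(\Ff)$-tube around an orbit in $\Pp(\Ff)$. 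Applying Lemma \ref{lem:5.2.f}(i) slice-wise and using $\|\p_s u(s, \cdot)\|_{L^2(S^1)} = \|\p_t u(s, \cdot) - X_{\Ht_\mu}(u(s, \cdot))\|_{L^2(S^1)}$ gives $\int_0^1 |\p_s u(s,t)|^2\, dt \ge \delta_1(\Ff)^2$ on $B(u)$, and hence $|B(u)| \le E(u)/\delta_1(\Ff)^2$. Cauchy--Schwarz on $B(u)$, combined with the exponential decay of $u$ at its ends inside the tubes and the uniform $C^1$-bound on Hamiltonians in $U(\Ff)$, produces a constant $K(\Ff) > 0$ with
$$\left|\int_{-\infty}^{\infty}\!\!\int_0^1 \theta(\pi_*\p_s u)\, dt\, ds\right| \le K(\Ff)\, E(u).$$

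Setting $\tau(\Ff) := 1/(2K(\Ff))$, for every $|\tau| \le \tau(\Ff)$ the above identity gives $\Aa_{\Ht_\mu^{(\tau)}}([\xt,\wt]) - \Aa_{\Ht_\mu^{(\tau)}}([\yt,\vt]) \ge \tfrac12 E(u) \ge 0$, so the shifted action is non-increasing along $\p_{(J,\Ht_\mu)}$. The usual Gromov-type compactness argument (as in \cite{LO1995, Ono2005}, for which the unshifted Novikov finiteness of $CFN_*(\Ht_\mu, R)$ is already in hand) then shows that for each $c \in \R$ and each $\tau \in (-\tau(\Ff), \tau(\Ff))$ only finitely many $[\yt,\vt]$ with $n([\xt,\wt], [\yt,\vt]) \ne 0$ satisfy $\Aa_{\Ht_\mu^{(\tau)}}([\yt,\vt]) > c$. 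By the characterization in Lemma \ref{lem:intersect}(\ref{eq:cfnk}) this is exactly the assertion $\p_{(J,\Ht_\mu)}([\xt,\wt]) \in CFN_*^{(-\tau,\tau)}(\Ht_\mu, R)$; that $\p^2 = 0$ on the subcomplex is inherited from the ambient chain complex. The main obstacle is producing $K(\Ff)$ and $\delta_1(\Ff)$ uniformly over the entire good neighborhood $U(\Ff)$ rather than on a single slice $\Uu_c(\Ht_\chi)$, and this is precisely why the $C^1$-continuity in Definition \ref{def:adm}(1) and the compactness of the parameter $\chi \in [0,1]$ were built into the notion of an admissible family and its good neighborhood.
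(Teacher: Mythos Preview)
Your outline for part (i) matches the paper's treatment exactly. For part (ii) your overall strategy --- bounding the $\theta$-perturbation of the action drop by $K(\Ff)\,E(u)$ and then reducing to the unshifted Novikov finiteness already built into $CFN_*(\Ht_\mu,R)$ --- is precisely the paper's strategy, but your execution of the key estimate has a gap. The complement $\R\setminus B(u)$ need not consist only of the two asymptotic ends: a Floer trajectory may spend arbitrarily long intervals $\eps(\Ff)$-close to \emph{intermediate} periodic orbits before moving on, and on such intervals there is no exponential decay of $\p_s u$ and no a priori $L^1$ bound on $\p_s u$. So ``exponential decay at the ends'' does not control $\int_{\R\setminus B(u)}\int_0^1 \theta(\pi_*\p_s u)\,dt\,ds$.

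The paper sidesteps this by never splitting the cylinder integral. Since the $\theta$-term telescopes to $\int_0^1[h^\theta(\xt(t))-h^\theta(\yt(t))]\,dt$ and $dh^\theta=\pi^*\theta$, it is bounded outright by $\|\theta\|_{C^0}\,\rho(\xt,\yt)$; the real work is then Lemma~\ref{lem:onolem34} (a family version of \cite[Lemma~3.4]{Ono2005}), which gives $E(u)\ge\tfrac{\delta_1(\Ff)}{2}\,\rho(\xt,\yt)$ by decomposing $u$ into finitely many ``bad'' segments between consecutive intermediate orbits $\xt=\xt_0,\xt_1,\dots,\xt_k=\yt$ and applying your slice-wise bound together with Cauchy--Schwarz on each segment. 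This yields the explicit constant $\tau(\Ff)=\delta_1(\Ff)/(4\|\theta\|_{C^0})$. Your decomposition can in fact be repaired --- each connected component $[s_1,s_2]$ of $\R\setminus B(u)$ contributes at most $2\eps(\Ff)\|\theta\|_{C^0}$ by telescoping (both endpoints are $\eps(\Ff)$-close to the \emph{same} orbit), and the number of such components is at most $2E(u)/e(\Ff)$ by the energy quantization of Lemma~\ref{lem:onolem33} --- but the paper's route through $\rho(\xt,\yt)$ is both cleaner and gives an explicit $\tau(\Ff)$.
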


\begin{proof}   (i)  The $J$-regularity   of $\Ht_0$  ensures that  any nondegenerate $\Ht_\mu \in U_c(\Ff)$   satisfies the  requirement (1) in Remark \ref{rem:comment1} for the $J$-regularity, since  $\Pp(\Ht_\mu) = \Pp(\Ht_0)$  by (\ref{eq:pu}). Clearly, the nondegeneracy  condition of $\Ht_{\mu} \in U_c(\Ff)$  defines a   open and dense  subset of  $U_c (\Ff)$.
 To prove    that  the   requirements   (2), (3) of the  $J$-regularity  also define a
dense  subset in $\Uu_c(\Ht_\chi)$ we use the standard   transversality argument   in the proof  of    Theorems  3.2, 3.3 in \cite{HS1994}, 
see also the proof of Theorems   3.2, 3.3 in \cite{LO1995}. 
 So we omit  the  proof.

(ii) Our  proof   of the second assertion uses   many ideas in the  proof of  \cite[Proposition 4.5]{Ono2005}.
 
First we prove the following  two Lemmas containing     uniform estimates   for the  proof of  Theorem \ref{thm:small}.

We set
$$ e(\Ff) : = \min ( 4 \eps^2 (\Ff), \frac{\delta_1^2 (\Ff)}{2}).$$

\begin{lemma}\label{lem:onolem33}  (cf. \cite[Lemma 3.2]{Ono2005}) Suppose   that $\Ht_\mu \in U(\Ff)$  is $J$-regular,  $-\infty < R_1 < R_2 < \infty$,
$\xt_1, \xt_2 \in \Pp(\Ht_\mu)
= \Pp(\Ff)$  are distinct one-periodic orbits  and 
$u \in \Mm([\xt_1, \wt], [\xt_2, \vt], \Ht_\mu, J)$      satisfies $\max_t d(u (R_i, t), \xt_i (t)) \le \eps(\Ff)$. Then
$$\int_{R_1}^{R_2}\int_0^1 |\frac{\p u} {\p s}|^2 \, ds\, dt  > e(\Ff).$$
\end{lemma}

\begin{lemma}\label{lem:onolem34}(cf. \cite[Lemma 3.4]{Ono2005}, cf. \cite[Lemma 3.5]{LO1995}) Suppose   that $\Ht_\mu \in U(\Ff)$ is $J$-regular  and $\xt, \yt \in \Pp (\Ht_\mu) = \Pp (\Ff)$ are  distinct one-periodic orbits. 
For any  $u \in \Mm([\xt, \wt], [\yt, \vt], \Ht_\mu, J)$  we have
$$ E(u) = \int_{-\infty}^{\infty}\int_0^1 |\frac{\p u} {\p s}|^2 \, ds\, dt  \ge \frac{\delta_1(\Ff)}{2} \rho(\xt, \yt).$$
\end{lemma}

\


\begin{proof}[Proof of Lemma \ref{lem:onolem33}] 
 Our   proof   is a refinement  of the proof  of  \cite[Lemma 3.2]{Ono2005}. 
W.l.o.g.  we may assume that
\begin{enumerate}
\item $\max_t d(u(R_i, t), \xt_i (t)) = \eps (\Ff)$,
\item  For any $r \in (R_1, R_2)$  and    for $i = 1,2$ we have \\
$\max_{ t \in (R_1, R_2)} d(u(r, t), \xt_i (t)) > \eps(\Ff)$.
\end{enumerate}

\

{\it Case 1:  $R_2 - R_1 \le 1$.} Using the Cauchy-Schwarz  inequality  we obtain
$$\int_0^1\int_{R_1} ^{R_2}|\frac{\p u }{\p s} | ^2 \, ds dt \ge \int_0^1 (\int_{R_1}^{R_2} |\frac{\p u}{\p s}| ds)^2 dt \ge \int_0^1  (d(u(R_1, t),  u(R_2, t)))^2 \, dt. $$
Applying the  Cauchy-Schwarz   inequality again, we obtain  from the  above inequality
$$\int_0^1\int_{R_1} ^{R_2}|\frac{\p u }{\p s} | ^2 \, ds\, dt \ge \rho (u(R_1, -), u(R_2,- ))^2$$

$$\ge    (\rho (\xt_1, \xt_2) -\rho (\xt_1, u(R_1, -)) - \rho ( \xt_2, u(R_2, -))^2$$

$$\ge  (\rho(\xt_1 , \xt_2 ) - 2 \eps (\Ff))^2 > 4 \eps^2 (\Ff),  $$

since $\rho (\xt_i, u(R_i, -)) \le \max_t d(u(R_i, t), \xt_i (t)) = \eps (\Ff)$  and by (\ref{eq:eps})
$\rho(\xt_1 , \xt_2 ) - 2 \eps (\Ff) \ge 2\eps(\Ff)$.
\

Since  $4 \eps  ^2 (\Ff) \ge  e(\Ff)$, Lemma  \ref{lem:onolem33}  holds in Case 1.

\

{\it Case 2: $ R_2 - R_1  > 1$}.  Assume  that  Lemma \ref{lem:onolem33}   does not hold.  Since $R_2 - R_1 > 1$ there exists  $r \in  (R_1, R_2)$ such that
\begin{equation}
\int _0^1 |\frac{\p u(s, t) }{\p s} |^2(r, t)\, dt   < e (\Ff) \le \frac{\delta_1 ^2 (\Ff)}{2}.\label{eq:lem33}
\end{equation}
Since $u$ is a connecting orbit  associated with the  Hamiltonian $\Ht_\mu$ we obtain  from (\ref{eq:lem33})
\begin{equation}
|| \frac{\p u}{\p t} (r, .) -  X_{\Ht_{\mu}} (G_{u(r,.)})||_{L^2(S^1, \R^{N_0})} < \delta_1 (\Ff).\label{eq:ono33}
\end{equation}
By Lemma \ref{lem:5.2.f}, taking into  account  the assumption (2) at the beginning og the proof of Lemma \ref{lem:onolem33},  (\ref{eq:ono33}) cannot happen.  Hence  Lemma \ref{lem:onolem33}     also holds in  Case 2.
This completes  the  proof of Lemma \ref{lem:onolem33}.
\end{proof}

\begin{proof}[Proof  of Lemma \ref{lem:onolem34}] We repeat  the  proof of  \cite[Lemma 3.4]{Ono2005}, which is a refinement of   the  proof of Lemma \cite[Lemma 3.5]{LO1995},  and we make precise  the meaning of the constant  $\delta$  in the statement of  \cite[Lemma 3.4]{Ono2005}.

Let $u \in \Mm([\xt, \wt], [\yt, \vt], \Ht_\mu, J)$. Since $E(u) < \infty$, by Lemma \ref{lem:onolem33}  there are finitely many real  numbers  $-\infty < R ^{1-} <  R^{ 1+} < \cdots   R^{k-} <R^{ k +}  < + \infty $   and  one-periodic   solutions  $\xt_0 = \xt, \xt_1, \cdots,
\xt_k =  \yt$  such that 
\begin{enumerate}
\item $ \max_t  d(\xt_{i-1}  (t), u(R^{i-}, t))  = \max _t  d(\xt_t(t), u(R^{i+}, t)) = \eps (\Ff)$,
\item $\max_t (u(s, t), \zt(t)) > \eps(\Ff)$ for $s \in (R^{i-}, R^{i+})$  and $\zt \in \Pp (\Ff)$.
\end{enumerate} 
First we estimate
$$E ^{R^{i+}}_{R^{i-}} (u) : = \int_{R^{i-}} ^{R^{i+}} \int_0^1 | \frac{\p u }{\p s}| ^2 \, ds\, dt $$
$$ =  \int_{R^{i-}}^{R^{i+}} (\sqrt{\int_0^1 |\frac{\p u} {\p  s} (s, t) - X_{\Ht_\mu} (t, u (s, t))|^2 \, dt } ) ^2 \, ds.$$
Applying  the Cauchy-Schwarz inequality, we obtain
\begin{equation}
E ^{R^{i+}}_{R^{i-}} (u) \ge \frac{1}{R^{i+} - R^{ i -}} (\int_{R^{i-}} ^{ R^{i +}}\sqrt{\int_0^1 |\frac{\p u} {\p  s} (s, t) - X_{\Ht_\mu} (t, u (s, t))|^2 \, dt }\, ds )^2. \label{eq:cs}
\end{equation}
Combining   the  property (2)  with Lemma \ref{lem:5.2.f},  taking into account that $u$   is a connecting orbit  associated with the  Hamiltonian $\Ht_\mu$,  we obtain  from (\ref{eq:cs})
$$E^{R^{i+}}_{R^{i-}} (u) \ge \delta_1 (\Ff)\int_{R^{i-}} ^{R^{i +}}\sqrt{\int_0^1 |\frac{\p u}{\p s} |^2 dt}\, ds. $$
Applying  the Cauchy-Schwarz inequality again,  we obtain
$$E^{R^{i+}}_{R^{i-}} (u) \ge \delta_1 (\Ff)\int_{R^{i-}} ^{R^{i +}} \int_0^1|\frac{\p u}{\p s} | \, dt \, ds \ge \delta_1 (\Ff) (\rho (\xt_{i-1}, \xt_i ) - 2 \eps (\Ff)). $$
Using (\ref{eq:eps}), we  obtain
$$E^{R^{i+}}_{R^{i-}} (u) \ge \delta_1 (\Ff)(\rho (\xt_{i-1}, \xt_i)  -  2 \eps (\Ff)) \ge \frac{\delta_1 (\Ff)}{2} \rho (\xt_{i-1}, \xt_i).$$
Hence 
$$E(u) \ge  \sum_{i=1} ^k E^{R^{i+}} _{R^{i-}}(u) \ge  \frac{\delta_1 (\Ff)}{2} \rho(\xt, \yt).$$
This completes  the  proof of Lemma \ref{lem:onolem34}.
\end{proof}

\

{\it Continuation of the  proof of Theorem \ref{thm:small}} (ii)  Now assume that $\Ht_\mu\in U(\Ff)$ is $J$-regular.
We set  
\begin{equation}
\tau = \tau(\Ff): = \frac{\delta_1(\Ff)}{4 || \theta||_{C^0}}.\label{eq:alpha}
\end{equation}
Recall that $h^\theta$   is defined  in (\ref{eq:theta})  and $\Ht^{(\lambda)}_\mu$  is defined  in (\ref{eq:hgamma}). To  prove   Theorem \ref{thm:small} it suffices to show that  for any $C\in \R$  and  any $[\xt, \wt]$ with  $\mu([\xt, \wt]) = k$
we have
\begin{equation}
\# \{  u \in \Mm([\xt, \wt], [\yt, \vt])| \, \mu ([\yt, \vt]) =  k-1,\,  \Aa_{\Ht^{(\lambda)}_\mu} ([\yt, \vt]) >C\}< \infty   \label{eq:finite}
\end{equation}
for $\lambda =  \tau$  and   for $\lambda = -\tau$.
 We  write
\begin{eqnarray}
\Aa_{\Ht^{(\pm \tau)}_\mu} ([\yt, \vt]) =  \Aa_{\Ht ^{(\pm \tau)}_\mu}([\xt, \wt]) + \Aa_{\Ht_\mu} ([\yt, \vt])\nonumber\\
 +  \int_0^1  \pm \tau \cdot  h^{\theta}(\yt (t)) dt  - \Aa_{\Ht_\mu} ([\xt, \wt]) - \int_0^1\pm \tau \cdot  h^{\theta}(\xt(t))\, dt.\label{eq:decom1}
\end{eqnarray}
Taking into account  (\ref{eq:alpha})  and Lemma \ref{lem:onolem34},  we obtain
\begin{eqnarray}
|\int_0^1 \pm \tau \cdot  h ^{\theta}   (\xt (t))\, dt  - \int_0^1 \pm \tau \cdot h^{ \theta} (\yt (t))\, dt |\nonumber\\
\le |\tau\cdot\theta|_{ C^0}\cdot \rho (\xt, \yt) < \frac{E(u)}{2}.\label{eq:ono34}
\end{eqnarray}
We obtain from (\ref{eq:decom1})  and (\ref{eq:ono34}), taking into  account the energy identity (\ref{eq:energyid})
$$\Aa_{\Ht^{(\pm \tau)}_\mu} ([\yt, \vt])   > C $$
$$\LRA  \Aa_{\Ht_\mu} ([\yt, \vt])  >  C - \Aa_{\Ht ^{'(\pm \tau)}_\mu}([\xt, \wt]) + \Aa_{\Ht_\mu} ([\xt, \wt]) - \frac{E(u)}{2}$$
$$ >  C - \Aa_{\Ht ^{(\pm \tau)}_\mu}([\xt, \wt]) +  \Aa_{\Ht_\mu} ([\xt, \wt]) - \frac{\Aa_{\Ht_\mu} ([\xt, \wt]) - \Aa_{\Ht_\mu} ([\yt, \vt])}{2}$$
\begin{equation}
\LRA \frac{\Aa_{\Ht_\mu} ([\yt, \vt])}{2} \ge  C - \Aa_{\Ht ^{(\pm \tau)}_\mu}([\xt, \wt]) + \frac{\Aa_{\Ht_\mu} ([\xt, \wt])}{2}.\label{eq:filtr}
\end{equation}
Since  the RHS of (\ref{eq:filtr}), which   depends only on $C$  and on $[\xt, \wt]$,  is bounded form below,   there  is only finite  numbers  of $[\yt, \vt]$ that  satisfies  (\ref{eq:filtr}),
because $[\yt, \vt]$ enters in $\p_{J, \Ht_\mu}$.
This yields  (\ref{eq:finite}) and  completes the proof of Theorem \ref{thm:small}.
\end{proof}

From now on  we  abbreviate   the notation $(CFN_* ^{(-\tau, \tau)} (\Ht,  R), \p_{(J, \Ht)})$    as  $CFN_*^{(-\tau, \tau)} (\Ht, J, R)$.

\begin{proposition}\label{prop:rank}  Let $N$ be the minimal Chern number of  a compact symplectic manifold $(M^{2n}, \om)$  and $U(\Ff)$  a good neighborhood  of an admissible family $\Ff$ of  nondegenerate Hamiltonian functions
in $C^\infty_{(*)} (S^1 \times \Mt)$.
Assume that  $\Ht \in U(\Ff)$  is a $J$-regular  for some regular compatible almost complex structure $J$ on   $(M^{2n}, \om)$.   For any $i \in \Z_{2N}$ we have
$$ b_i (HFN_* (\Ht, J, R)  ) =   b_i (HFN_*  ^{(-\tau, \tau)}  (\Ht,  J, R)),$$
where $\tau = \tau (\Ff)$  is defined in (\ref{eq:alpha}).
\end{proposition}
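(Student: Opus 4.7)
My plan is to realize $CFN_*(\Ht, J, R)$ as the extension of scalars of $CFN_*^{(-\tau, \tau)}(\Ht, J, R)$ along an inclusion of Novikov rings, and then to use a common field of fractions together with flatness to equate Betti numbers. First, I would fix a system $\{[\xt_i, \wt_i]\}$ of representatives of the $\Gamma^0$-orbits in $\Ppt_k(\Ht)$, indexed by $\Pp_k(\om, \theta, H)$. By Lemma~\ref{lem:chain}, $CFN_k(\Ht, R)$ is the free $\Lambda^R_{\theta, \om}$-module on this basis; the analogous description over the smaller ring $\Lambda^R_{\theta(-\tau, \tau), \om}$ holds for $CFN_k^{(-\tau, \tau)}(\Ht, R)$ thanks to the intersection description in Lemma~\ref{lem:intersect}. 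Since the weight interval associated to the smaller ring contains the weight $\lambda$ of $\Ht$, one has a natural ring inclusion $\Lambda^R_{\theta(-\tau, \tau), \om} \hookrightarrow \Lambda^R_{\theta, \om}$, which assembles the two free-module descriptions into a canonical chain isomorphism
$$CFN_*(\Ht, J, R) \;\cong\; \Lambda^R_{\theta, \om} \otimes_{\Lambda^R_{\theta(-\tau, \tau), \om}} CFN_*^{(-\tau, \tau)}(\Ht, J, R),$$
whose boundary on the right is the extension by $\Lambda^R_{\theta, \om}$-linearity of the restricted Floer differential that exists by Theorem~\ref{thm:small}(ii).

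Second, I would establish the key ring-theoretic equality
$$F(\Lambda^R_{\theta, \om}) \;=\; F(\Lambda^R_{\theta(-\tau, \tau), \om}).$$
Both rings are integral domains by Proposition~\ref{prop:compono} and admit, via Proposition~\ref{prop:direct2}, Laurent-series-type descriptions as completions of $R[\ker \Psi]$. The idea is that any $\xi \in \Lambda^R_{\theta, \om}$ can, after multiplication by a suitable $g \in \Gamma^0$, be transported into the wedge of supports allowed by both weights defining the intersection ring, so that $\xi$ is visibly a fraction of two elements of $\Lambda^R_{\theta(-\tau, \tau), \om}$ inside $F(\Lambda^R_{\theta, \om})$. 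Combined with the inclusion of fields of fractions coming from Step 1, this gives the equality.

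Third, combining Steps~1--2 with the flatness of a field over its integral domain (compare Lemma~\ref{lem:field}), I would compute
\begin{align*}
F(\Lambda^R_{\theta, \om}) \otimes_{\Lambda^R_{\theta, \om}} HFN_i(\Ht, J, R)
&\cong H_i\!\bigl(F(\Lambda^R_{\theta, \om}) \otimes_{\Lambda^R_{\theta(-\tau, \tau), \om}} CFN_*^{(-\tau, \tau)}(\Ht, J, R)\bigr)\\
&\cong F(\Lambda^R_{\theta(-\tau, \tau), \om}) \otimes_{\Lambda^R_{\theta(-\tau, \tau), \om}} HFN_i^{(-\tau, \tau)}(\Ht, J, R).
\end{align*}
Taking dimensions over the common field of fractions yields the desired equality of Betti numbers. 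The main obstacle is Step~2: a careful combinatorial argument about the supports of Novikov series under two distinct weight homomorphisms on the finitely generated torsion-free group $\Gamma^0$, generalizing the classical observation (used in the proof of Lemma~\ref{lem:integral}) that a suitably completed group ring becomes a field of formal Laurent series. Once this common-denominator statement is in hand, Steps~1 and~3 follow formally from the module descriptions and from flatness.
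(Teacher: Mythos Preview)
Your overall strategy---realize the big complex as an extension of scalars of the small one and then compare dimensions after passing to fields of fractions---is exactly the paper's approach. Steps~1 and~3 are fine and match the paper's use of Theorem~\ref{thm:small} together with flatness of $F(\Lambda)$ over $\Lambda$.

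The problem is Step~2. Your sketch for the equality $F(\Lambda^R_{\theta,\om})=F(\Lambda^R_{\theta(-\tau,\tau),\om})$ does not work: multiplying an element $\xi\in\Lambda^R_{\theta,\om}$ by a single $g\in\Gamma^0$ merely \emph{translates} its support, it does not change its asymptotic shape. If the support of $\xi$ runs off to infinity along a ray on which $\Psi_{(\lambda-\tau)\theta,\om}$ (say) is bounded, no translate of $\xi$ will lie in the intersection ring. Concretely, with $\Gamma^0=\Z^2$, $\Psi_\mu(n,m)=\mu n-m$, $\lambda=1$, $\tau=1/2$, the element $\xi=\sum_{k\ge 0}a_k\,t^{(2k,k)}$ lies in $\Lambda^R_{\theta,\om}$ but no $g\cdot\xi$ lies in $\Lambda^R_{\theta(1/2,3/2),\om}$, since $\Psi_{1/2}(2,1)=0$. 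For generic coefficients $a_k$ there is no reason to expect $\xi$ to be a ratio of elements of the intersection ring, so the equality of fraction fields is doubtful in general.

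Fortunately Step~2 is unnecessary. All you need is the inclusion
\[
F\bigl(\Lambda^R_{\theta(-\tau,\tau),\om}\bigr)\ \hookrightarrow\ F\bigl(\Lambda^R_{\theta,\om}\bigr),
\]
which is immediate from the ring inclusion you already noted in Step~1, together with the elementary fact that for any field extension $K\supset F$ and any $F$-vector space $V$ one has $\dim_K(K\otimes_F V)=\dim_F V$. Applying this with $F=F(\Lambda^R_{\theta(-\tau,\tau),\om})$, $K=F(\Lambda^R_{\theta,\om})$, and $V=H_i\bigl(F\otimes_{\Lambda^R_{\theta(-\tau,\tau),\om}} CFN_*^{(-\tau,\tau)}(\Ht,J,R)\bigr)$ (using flatness of $K$ over $F$ to pull $K\otimes_F$ through homology) gives the equality of Betti numbers directly. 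This is precisely how the paper passes from the line involving $F(\Lambda^R_{\theta,\om})$ to the line involving $F(\Lambda^R_{\theta(-\tau,\tau),\om})$ in its proof. Drop the equality claim and replace it by this field-extension argument, and your proof goes through.
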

\begin{proof}  
Since the field   of fractions  $F(\Lambda ^R_{\theta (\tau_1, \tau_2)})$ is  a flat right $\Lambda ^R _{\theta (\tau_1, \tau_2)}$-module,  
we obtain  from (\ref{eq:ucf1}), using the  universal   coefficient     theorem
\begin{equation}
b_i(HFN_* (\Ht, J, R)  )  = \dim _{F(\Lambda ^\R_{\theta,  \om})} H_i(F(\Lambda ^R_{\theta, \om})\otimes _{\Lambda _{\theta, \om}^R}CFN_*(\Ht, J, R) ).\label{eq:ucf2}
\end{equation}
Using Theorem \ref{thm:small}, we obtain  from (\ref{eq:ucf2})
$$b_i(HFN_* (\Ht, J, R)  ) =  \dim _{F(\Lambda ^R_{\theta,  \om})} H_i(F(\Lambda ^R_{\theta, \om})\otimes _{\Lambda^R _{\theta (-\tau, \tau) , \om}}(CFN_* ^{(-\tau, \tau)}(\Ht, J, R) )$$
$$=\dim _{F(\Lambda ^R_{\theta(-\tau, \tau),  \om})} H_i(F(\Lambda ^R_{\theta(-\tau, \tau), \om})\otimes _{\Lambda^R_{\theta (-\tau, \tau), \om}}(CFN_* ^{(-\tau, \tau)}(\Ht, J, R) )$$
$$ =\dim _{F(\Lambda ^R_{\theta(-\tau, \tau),  \om})}(F(\Lambda ^R_{\theta(-\tau, \tau), \om}) \otimes _{\Lambda^R_{\theta (-\tau, \tau) , \om}} HFN_i ^{(-\tau, \tau)}(\Ht, J, R)) $$
$$ = b_i(HFN_* ^{(-\tau, \tau)} (\Ht, J, R)  )  .$$
This completes  the proof of Proposition \ref{prop:rank}.
\end{proof}



\subsection{Invariance of  the Betti numbers of   Floer-Novikov  chain complexes}\label{subs:special}
In this subsection we   assume  that  $\F$  is a field. 
 The goal of this subsection is to   prove the following.

\begin{theorem}\label{thm:comp}   The  Betti  numbers   $b_i (HFN_* ( \Ht, \F))$   do not depend on the choice of $\Ht \in  C^\infty _{(*)} (S^1\times \Mt^{2n})$.
\end{theorem}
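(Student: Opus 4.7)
By Remark \ref{rem:lambda}(2), the Floer--Novikov homology $HFN_*(\Ht,\F)$ depends on $\Ht \in C^\infty_{(*)}(S^1 \times \Mt^{2n})$ only through its weight $\lambda$; hence $\Ht \mapsto b_i(HFN_*(\Ht,\F))$ factors through a well-defined function $f_i : \R \to \Z_{\ge 0}$. Since $\R$ is connected and $f_i$ takes integer values, it suffices to prove that $f_i$ is locally constant. We therefore fix $\lambda_0 \in \R$ and aim to exhibit $\mu_0 > 0$ such that $f_i(\lambda_0) = f_i(\lambda_0 + \mu)$ for all $|\mu| < \mu_0$.

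\textbf{Construction of an admissible family with varying weight.} Choose a nondegenerate $J$-regular $\Ht_0 \in C^\infty_{(\lambda_0)}(S^1 \times \Mt^{2n})$ with contractible periodic orbits $\tilde x_1, \ldots, \tilde x_k$, and set $x_i := \pi(\tilde x_i)$. For small $\mu$ define
\[
\Ht_\chi := \Ht_0 + \chi \mu\, h^\theta + \pi^*(g_\chi),\qquad \chi \in [0,1],
\]
where $g_\chi \in C^\infty(S^1 \times M^{2n})$ is produced by an explicit local construction supported in the $\eps(\Ht_0)$-tubular neighborhoods $U_i$ of the graphs $G_{x_i}$, satisfying $dg_\chi(t, x_i(t)) = -\chi\mu\,\theta(x_i(t))$ for every $i,t$. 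This forces $X_{\Ht_\chi}(\tilde x_i(t)) = X_{\Ht_0}(\tilde x_i(t))$, hence $\Pp(\Ht_\chi) = \Pp(\Ht_0)$ for all $\chi$; continuity in $\chi$ is automatic from linearity in $\chi$. Thus $\Ff := \{\Ht_\chi\}_{\chi \in [0,1]}$ is admissible in the sense of Definition \ref{def:adm}. For $|\mu|$ small enough $\Ht_1$ remains nondegenerate, and after an arbitrarily small perturbation inside the good neighborhood $U_{c(\Ff)}(\Ht_1)$---which by Theorem \ref{thm:small}(i) can be taken $J$-regular and by Proposition \ref{prop:iso} does not alter $f_i(\lambda_0 + \mu)$---we may also assume $\Ht_1$ is $J$-regular. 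Applying Proposition \ref{prop:rank} to both endpoints with the same $\Ff$ gives
\[
f_i(\lambda_0) = b_i\bigl(HFN_*^{(-\tau,\tau)}(\Ht_0, J, \F)\bigr),\quad
f_i(\lambda_0 + \mu) = b_i\bigl(HFN_*^{(-\tau,\tau)}(\Ht_1, J, \F)\bigr),
\]
with $\tau = \tau(\Ff)$, and the underlying $\Lambda^\F_{\theta(-\tau,\tau),\om}$-modules of the two small complexes are canonically identified, since $\Pp(\Ht_0) = \Pp(\Ht_1)$.

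\textbf{Continuation and main obstacle.} It remains to show that the two differentials $\p_{(J, \Ht_0)}$ and $\p_{(J, \Ht_1)}$ are chain homotopic on this common module. The standard Floer continuation recipe produces a chain map by counting $s$-dependent solutions of $\p_s u + J(\p_t u - X_{\Ht_s}) = 0$ for a monotone homotopy $\Ht_s$ running through $\Ff$, together with a chain homotopy supplied by the usual parameter-homotopy argument. The hard part is the energy estimate: the continuation energy contains a term $\int_\R \int_{S^1} (\p_s \Ht_s)(u)\, dt\, ds$, and since $\Ht_1 - \Ht_0 = \mu h^\theta + \pi^*(g_1)$ contains the term $\mu h^\theta$ which is unbounded on $\Mt^{2n}$, this contribution is not controlled by any $C^0$-norm on $\Mt^{2n}$. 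The resolution borrows the Cauchy--Schwarz bound from the proof of Lemma \ref{lem:onolem34}: the contribution of $\mu h^\theta$ along any connecting orbit $u$ is dominated by $|\mu|\,\|\theta\|_{C^0}\,\rho(\tilde x^-,\tilde x^+)$, which by Lemma \ref{lem:onolem34} is in turn dominated by $2|\mu|\,\|\theta\|_{C^0}\,E(u)/\delta_1(\Ff)$, so the inequality $|\mu| < \delta_1(\Ff)/(4\|\theta\|_{C^0}) = \tau(\Ff)$ absorbs this error into the action filtration in exactly the manner used in the proof of Theorem \ref{thm:small}(ii) (see the derivation leading to \eqref{eq:filtr}). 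This forces the continuation map and the homotopy operator to land in the small completion $\Lambda^\F_{\theta(-\tau,\tau),\om}$, and the standard gluing/homotopy argument then yields a chain homotopy equivalence between the two small complexes. Consequently $f_i(\lambda_0) = f_i(\lambda_0 + \mu)$ for all $|\mu| < \tau(\Ff)$, which establishes local constancy and, by connectedness of $\R$, completes the proof.
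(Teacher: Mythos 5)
Your overall strategy matches the paper's: reduce to local constancy of the weight-dependence via Remark \ref{rem:lambda}, build an admissible family through the target weight, and run a continuation map whose energy defect is absorbed using the $\delta_1/\|\theta\|_{C^0}$-type bound. However, there are two genuine gaps.

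\textbf{Admissibility is not established.} From $d_x g_\chi(t,x_i(t)) = -\chi\mu\,\theta(x_i(t))$ you only get $X_{\Ht_\chi} = X_{\Ht_0}$ \emph{at the points of the old orbits}, hence $\Pp(\Ht_0) \subseteq \Pp(\Ht_\chi)$. Definition \ref{def:adm} requires equality, i.e.\ you must also rule out \emph{new} contractible one-periodic orbits, and nothing in your construction does this. Note moreover that you cannot strengthen the condition to $d_x g_\chi = -\chi\mu\,\theta$ on all of $U_i$: since $g_\chi$ is supported in $U_i$, $d_x g_\chi$ must vanish on $\p U_i$, while $\theta$ does not. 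So the modified and original vector fields agree only on the graphs, not on a neighborhood, and a new orbit could appear arbitrarily close to some $x_i$. The paper avoids this entirely by building the family from the \emph{flow} side: one chooses a closed $1$-form $\eta$ cohomologous (in the $d_x$-sense) to $p^*(\theta)$ with $\eta|_{U_i}\equiv 0$ (Lemma \ref{lem:eta}), takes the isotopy $\phi_t^{\chi\eta}\circ\varphi_t$, and lifts it. Because $\phi_t^{\chi\eta}$ is the identity on each $U_i$, any periodic orbit of the composed flow whose graph enters some $U_i$ is already an orbit of $\varphi_t$; orbits staying outside $\bigcup_i U_i$ are excluded by the quantitative gap estimate (Lemma \ref{lem:same}, relying on Lemma \ref{lem:5.2}). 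Your additive construction needs an explicit substitute for Lemma \ref{lem:same} (nondegeneracy plus a compactness argument for small $\mu$) that you do not supply.

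\textbf{The two small complexes are not over the same Novikov ring.} If $\Ht_0$ has weight $\lambda_0$ and $\Ht_1$ has weight $\lambda_0+\mu$, then by the definitions in Section \ref{subs:adm} the truncated complexes live over $\Lambda^\F_{\theta(\lambda_0-\tau,\lambda_0+\tau),\om}$ and $\Lambda^\F_{\theta(\lambda_0+\mu-\tau,\lambda_0+\mu+\tau),\om}$, which are different rings. Your phrase ``the underlying $\Lambda^\F_{\theta(-\tau,\tau),\om}$-modules of the two small complexes are canonically identified'' conflates them. The paper resolves this in Step 4 of Proposition \ref{prop:bettismall} by choosing $\delta_4 \le \min\{\delta_3/2,\tau/2\}$ so that both truncated complexes embed into, and have the same Betti numbers as, complexes over the \emph{common} ring $\Lambda^\F_{\theta(\lambda_0-\delta_4,\lambda_0+\delta_4),\om}$ (Lemma \ref{lem:uniform}, using Lemma \ref{lem:intersect}). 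Your argument needs this shrinking step before the continuation map can be declared a map of $\Lambda$-modules. Once both points are repaired, the continuation and energy-estimate part of your sketch (invoking the analogues of Lemmas \ref{lem:energy1}, \ref{lem:onolem34}, \ref{lem:5.5}) is in line with the paper's Steps 5--6.
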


\begin{proof} By Remark  \ref{rem:lambda}  the Betti  numbers $b_i (HFN_* (\Ht,\F))$  depend  only on  $\lambda$,  where $\Ht \in C^\infty _{(\lambda)} (S^1 \times  \Mt^{2n})$. Thus, to  prove Theorem \ref{thm:comp},  it suffices  to prove   the following.

\begin{proposition}\label{prop:bettismall}   Assume  that $J$  is regular. Let  $\Ht \in C^\infty _{(\lambda)} (S^1 \times \Mt)$  be  nondegenerate and $J$-regular.  Then there  exists  a number $\delta = \delta(\Ht) > 0$     such that  for any $\chi \in (-\delta, \delta)$  there is  a nondegenerate  $J$-regular  Hamiltonian function $\Ht_{\mu(\chi)} \in  C^\infty _{(\lambda +\chi)} (S^1 \times \Mt^{2n})$  with
$$b_i (HFN_* (\Ht,\F)) = b_i (HFN_*(\Ht_\chi ,\F))$$
for all $i \in \Z_{2N}$.
\end{proposition}

\begin{proof} Let $\Ht \in C^\infty _{(\lambda)} (S^1 \times \Mt)$. We shall  construct  an admissible  family $\Ff \ni \Ht$  and   find an  open  interval $(-\delta, \delta)$  such  that
for any $\chi \in (-\delta, \delta)$  there is  a function $\Ht_{\mu(\chi)}\in  C^\infty _{(\lambda +\chi)} (S^1 \times \Mt^{2n}) \cap U(\Ff)$  which satisfies 
the required  property in  Proposition \ref{prop:bettismall}.  This will be done in  6 steps.

 Since $\Ht \in C^\infty _{(\lambda)} (S^1 \times \Mt)$,  there  is a function $H \in C^\infty (S^1\times \Mt^{2n})$  such  that %
$$ \Ht -  \lambda\cdot  h^\theta  = \pi^*(H).$$

 

\
	
\underline{Step 1}.  In  this step  we  construct  a  ``linear  part"  of  the desired  admissible family $\Ff \ni \Ht$  of Hamiltonians   for the  proof of   Proposition \ref{prop:bettismall}.  The main point  of this step is Lemma \ref{lem:eta}.

 Denote by  $ U_i : = U_\eps (G_{x_i})$ the open $\eps$-tubular neighborhood  of the  graph $G_{x_i} \subset   S^1 \times  M^{2n}$ of the  periodic   solution
$x_i \in \Pp (\Ht)$, where   $ \eps = \eps  (\Ht)$  satisfies  the  inequality  in (\ref{eq:eps})
\begin{equation}
\frac{1}{4} \max _{ t \in S^1} d (x_i (t), x_j(t)) \ge \frac{1}{4} \rho (x_i , x_j) > \eps(\Ht)  \nonumber
\end{equation}
if $i \not = j$.  


Let $p : S^1 \times  M^{2n} \to M^{2n}$ and $q: S^1 \times M^{2n} \to S^1$ denote  the projections onto the second and the first component  respectively. We  set
$$T^{0,1} (S^1 \times  M^{2n})  : = p ^* ( T^*M^{2n}), \:  \:   T^ {1, 0}(S^1  \times M^{2n}) : =   q ^* (T^* S^1).$$
Then we  have $T^* (S^1 \times M^{2n}) =  T^{0,1} (S^1 \times  M^{2n}) \oplus T^ {1, 0}(S^1  \times M^{2n})$.   This  yields a direct  decomposition
$$ \Om ^1 (S^1 \times M^{2n} )  =\Om ^{0,1} (S^1 \times M^{2n}) \oplus  \Om ^{1, 0} (S^1 \times M^{2n}),  $$
where
$$\Om ^{0,1} (S^1 \times M^{2n}) : = \{  \zeta \in \Om ^1 (S^1 \times  M^{2n})|\,  \zeta(t, x) \in T^{0,1} (S^1 \times M^{2n})\},$$
$$\Om ^{1,0} (S^1 \times M^{2n}) : = \{  \zeta \in \Om ^1 (S^1 \times  M^{2n})|\,  \zeta(t, x) \in T^{1, 0} (S^1 \times M^{2n})\}.$$

\

For a function $H \in C^\infty (S^1 \times  M^{2n})$  denote by $d_x H$ the  projection  of $dH$ on  the  component  $\Om ^{0,1} (S^1 \times  M^{2n})$.
Denote by $p $ the natural projection $ S^1 \times M^{2n}  \to M^{2n}$.

\begin{lemma}\label{lem:eta}  There  exists  a    1-form $\eta\in \Om^{0,1} (S^1 \times M^{2n})$ such that   the following conditions hold:\\
1)  $\eta   - p^*(\theta)   =  d_x H$,   for some  $H \in C^\infty (S^1 \times  M^{2n})$, \\
2) $  \eta_{| U_i} = 0 $ for all $i \in [1,k]$.
\end{lemma}
\begin{proof}    Lemma \ref{lem:eta}  has been used in \cite{LO1995} without  a (detailed)  proof. For  the reader's convenience
we  present    a  detailed  proof here.
 Since $d_x p^* (\theta) =  d_x \theta =0$, and  $x_i(t)$ is a contractible   curve  in $M^{2n}$,  there  exists  a function $H_i  \in C^\infty (U_i)$ such that  
\begin{equation}
p^* (\theta) _{| U_i}   =  d_xH_i  .  \label{eq:dx1}
\end{equation}
Since $U_i$   are  mutually  disjoint, there   exists a function $H \in C^\infty (S^1 \times   M^{2n})$ such that
\begin{equation}
H _{ | U_i}   = H_i .\label{eq:dx2}
\end{equation}
Now we set  $\eta = p ^* (\theta) -  d_x H $.  Then   $\eta$ satisfies the  first    condition in Lemma \ref{lem:eta}.
  By (\ref{eq:dx1}), (\ref{eq:dx2}) we have
$$\eta_{|  U_i} =  d_x H_i  - d_x H_i = 0.$$
Thus  $\eta$  also satisfies  the second   condition  of Lemma  \ref{lem:eta}. This completes  the  proof  of Lemma \ref{lem:eta}.
\end{proof}

\underline{Step 2}.  In this  step, using  $\eta$ in Lemma \ref{lem:eta}, we     construct  ``the action"  of   the  desired    admissible family $\Ff$.
The main point  of this step is  Lemma \ref{lem:same}.

First we choose  a positive number $\delta_1 =\delta_1(\Ht)$ from  following  Lemma, which is a special  case of Lemma \ref{lem:5.2.f}.

\begin{lemma}\label{lem:5.2} There  exists  a positive number   $\delta_1 = \delta_1 (\Ht)> 0$  
 such that   
$$|| \dot \sigma  - X_{\Ht}(G_\sigma)||_{L^2(S^1, \R^{N_0})}   > \delta_1$$
  for any    loop $\sigma(t)$ in $\Mt^{2n}$  satisfying $\max_t  d(\sigma (t), x(t)) > \eps(\Ht)$ for any $x\in \Pp (\Ht)$.
\end{lemma}

\
Let $\eta_t : =  \eta (t, -)$.
Denote by $\phi_t ^{\eta}$  the symplectic flow  on $M^{2n}$  that is generated by   the time-depending symplectic  vector  field  $L_{\om} ^{-1} (\eta_t)$  with $\phi_0^{\eta} = Id$.  

\

Now we  choose   a small positive number   $0<\delta_2 = \delta_2 (\Ht, \eta) < \delta_1/3$ such that
\begin{equation}
\| \pi_*( X_{\Ht_t})  - d\phi_t ^{c \cdot \eta} (\pi_* (X_{\Ht_t})) \|_{C^0 ( M^{2n})}   < \frac{\delta_1}{3}\label{eq:est1}
\end{equation}
and
\begin{equation}
\|X_{c\cdot \eta_t}\|_{ C^0(M^{2n}) }< \frac{\delta_1}{3}\label{eq:est2}
\end{equation}
for any $c \in [-\delta_2, \delta_2]$ and any $t \in [0,1]$.  The number  $\delta_2$ exists, since   $[0,1] \times M^{2n}$ is compact and    $d\phi_t  ^{0\cdot \eta} = Id$  for all $t \in [0,1]$. 

\

Denote by $\varphi_t$ the symplectic flow on $M^{2n}$ generated by  the time-dependent vector  field $\pi_*(X_{\Ht_t})$.

\begin{lemma}\label{lem:same}  The    symplectic flows   $\varphi_t$ and   $\phi_t ^{c\cdot \eta}\circ \varphi_t$  have the same    contractible one-periodic  orbits
for  all $c\in [-\delta_2, \delta_2]$.  
\end{lemma}

\begin{proof}  Lemma \ref{lem:same}    and    our  proof   stem   from  analogous      arguments  in \cite[\S 3.2]{Ono2005}.
Assume the opposite,  i.e.  there are  a number  $c\in [-\delta_2, \delta_2]$ and a  contractible  one-periodic  orbit $\sigma (t)$  of  the flow $\phi_t^{c\cdot \eta}\circ \varphi_t$
which is not  a one-periodic orbit of $\varphi_t$.  We  abbreviate $L_{\om} ^{-1} (\eta_t)$ as $X_{\eta_t}$.   We  compute
\begin{equation}
{d \over dt}(\phi_t^{c\cdot \eta}\circ \varphi_t (x) )= d\phi_t ^{c\cdot \eta}  (\pi_*(X_{\Ht_t}) (\varphi_t (x)) + X_{c\cdot\eta_t} (\phi_t^{c\cdot \eta}\circ \varphi_t (x))\label{eq:vector}
\end{equation}
 Now let $x = \sigma(0)$.  If   $(t,\sigma (t) )\subset  U_i$ then  by (\ref{eq:vector})
\begin{equation}
{d \over dt}(\phi_t^{c\cdot \eta}\circ \varphi_t (\sigma(0))) = d\phi_t ^{c\cdot \eta}  (\pi_*(X_{\Ht_t}) (\varphi_t (x)).\label{eq:vector2}
\end{equation}
Since  $\eta_{| U_i}  = 0$, it is not hard  to  conclude    from (\ref{eq:vector2}) that 
$$ \sigma (t) : = \phi_t^{c\cdot \eta} (\varphi_t (\sigma(0)) = \varphi_t (\sigma (0)).$$
Hence $\sigma(t)$ is also an  one-periodic orbit   of $\varphi_t$, which  is a contradiction. This implies that
$(t,\sigma (t))$    does not lie in $U_i$  for any $i$. Hence
\begin{equation}
\max_t d(\sigma(t),  x _i (t))  > \eps(\Ht) \nonumber
\end{equation}
 for any $i$.  Combining with Lemma \ref{lem:5.2}  we obtain
\begin{equation}
\|\pi_*( X_{\Ht_t}) (\sigma (t)) - \dot \sigma (t)\|_{L^2(S^1, \R^{N_0})} > \delta_1(\Ht).\label{eq:est3}
\end{equation}
Using (\ref{eq:vector}) we obtain
  the following inequalities,  taking into account  the inequalities (\ref{eq:est3}), \ref{eq:est2}), (\ref{eq:est1}), 
$$ ||{d\over dt} (\phi_t ^{c\cdot  \eta} \circ \varphi_t)(\sigma (0)) - \dot \sigma(t)||_{L^2 (S^1, \R^{N_0})} $$
$$=||d\phi_t ^{c \cdot \eta} (\pi_*(X_{\Ht_t})(\varphi_t(\sigma(0))) + X_{c\cdot \eta_t} (\sigma (t))  -\dot\sigma(t)||_{L^2(S^1, \R^{N_0})}  $$ 
$$ \ge ||\pi_*( X_{\Ht_t})(\sigma(t)) - \dot \sigma(t)||_{L^2(S^1, \R^{N_0})} - || X_{c\cdot  \eta_t}||_{C^0 (M^{2n})} $$
$$-|| \pi_*( X_{\Ht_t})(\sigma(t))  - d\phi_t ^{c \cdot \eta} (\pi_* (X_{\Ht_t}))(\sigma(t))||_{L^2(S^1, \R^{N_0})}  $$
$$  \ge \delta_1 - \frac{\delta_1}{3} -\frac{\delta_1}{3} > 0 .$$
 We arrive  at  a contradiction.  This completes the  proof of Lemma \ref{lem:same}.
\end{proof}

\underline{Step 3}  In this  step we construct  the desired admissible family $\Ff \ni \Ht$.  The main point  of this  step
is  Lemma  \ref{lem:hregular}.

  Assume that $\chi \in [-\delta_2, \delta_2]$.   Then   for all  $t\in [0,1]$
$$[L_\om^{-1}(d\phi_t ^{\chi\cdot \eta} (\pi_*(X_{\Ht_t} (\phi_t (x))  - \pi_*(X_{\Ht_t}) (\phi_t^{\chi\cdot \eta}\circ \varphi_t( x))]  = 0  \in H^1 (M^{2n}, \R).$$
Hence  there   exists  a  unique  function  $  h^0_\chi  \in C^\infty (S^1  \times  M^{2n})$ such that   for   a given  point $x_0 \in M^{2n}$
$$h^0_\chi (x_0)  = 0$$
and $\Ht + \chi\cdot h^\theta +\pi^*( h^0_\chi)$  generates the  Hamiltonian   isotopy  whose  projection  on $M^{2n}$   is the isotopy  $\phi_t ^{\chi \cdot  \eta} \circ \varphi_t$.
We  set  
\begin{eqnarray}
\tilde h^0_\chi: = \chi\cdot h^\theta + \pi^*(h^0_\chi),\label{eq:htilde}\\
\Ht_\chi: =  \Ht +  \tilde h^0_\chi. \label{eq:Htilde}
\end{eqnarray}
 


From Lemma \ref{lem:same} we obtain immediately the following, observing that the  nondegeneracy of $\Ht$ is an open property.

\begin{lemma}\label{lem:hregular} There is a positive number  $\delta_3 (\Ht) \le  \delta_2 (\Ht, \eta)$ such that the following  statement holds.
The  family  $\Ff (\Ht) : = \{ \Ht_\chi \in  C^\infty_{(\lambda +\chi)} (S^1  \times  \Mt^{2n})|\,  \chi \in [-\delta_3 (\Ht), \delta_3 (\Ht)]\}$ is  an admissible 
family  of nondegenerate  Hamiltonian  functions.
\end{lemma}

\

\underline{Step  4}.  
In this  step   we   shall choose   first candidates  for  $\delta(\Ht)$  and  a $J$-regular   nondegenerate  Hamiltonian $\Ht_{\mu(\chi)}$ for   the  proof of Proposition \ref{prop:bettismall}.
 
Set 
$$\delta_4  = \delta_4 (\Ht) : = \min \{  {\delta_3(\Ht)\over 2}, {\tau (\Ff)\over 2} \}.$$
Then 
\begin{equation}
\delta_4 \le \min \{ \tau(\Ff) -\chi, \tau (\Ff) + \chi \} \text { for  all } \chi \in  [-\delta_4, \delta_4]. \label{eq:deltah}
\end{equation}

\begin{lemma}\label{lem:uniform} For any  $\chi \in  [-\delta_4 ,  \delta_4]$     and  any $J$-regular  Hamiltonian  function 
$\Ht_{\mu(\chi)} \in  U_c^{reg} (\Ht_\chi)$ the  following   assertion holds.
 Let $[\xt, \wt]  \in  Crit (\Aa_{\Ht_{\mu(\chi)}})$. Then 
$$\p_{J, \Ht_{\mu(\chi)}}  (\xt, \wt) \in  \Lambda^ \F _{\theta(\lambda - \delta_4, \lambda + \delta_4), \om} \otimes_{\F[\Gamma^0]} CFN_* ^0 (\Ht_{\mu(\chi)}),  \F).$$ 
Furthermore
$$b_i (CFN_* (\Ht_{\mu (\chi)}, J, \F)  =  b_i (\Lambda^ \F _{\theta(\lambda - \delta_4, \lambda + \delta_4), \om} \otimes_{\F[\Gamma^0]} CFN_* ^0 (\Ht_{\mu(\chi)}),  J, \F).$$ 
\end{lemma}

\begin{proof} Let $\Ht_{\mu(\chi)} \in  U_c^{reg} (\Ht_\chi)$.  We  set $\tau: = \tau  (\Ff)$. By Theorem \ref{thm:small} we  have
\begin{equation}
 \p_{J, \Ht_{\mu(\chi)}}  (\xt, \wt) \in  CFN_*^{-\tau, \tau}(\Ht_{\mu (\chi)}, \F).\label{eq:red1}
 \end{equation}
 Since  $\Ht_{\mu (\chi)} \in C^\infty _{(\lambda + \chi)}  (S^1 \times  \Mt ^{2n})$, by definition we have
 
\begin{equation}
CFN_*^{-\tau, \tau}(\Ht_{\mu (\chi)}, \F) = \Lambda ^\F _{\theta ( \lambda + \chi - \tau, \lambda+\chi + \tau), \om} \otimes _{\F[\Gamma ^0]} CFN_* ^ 0 (\Ht_{\mu(\chi)}, \F). \label{eq:uni1}
\end{equation}
 
 From (\ref{eq:deltah}) we obtain
 \begin{equation}
 \lambda + \chi - \tau \le  \lambda - \delta_4 \le \lambda + \delta_4  \le \lambda + \chi + \tau. \label{eq:uni2}
 \end{equation}
  Using Lemma \ref{lem:intersect} we  deduce from  (\ref{eq:red1}),  (\ref{eq:uni1})   and   (\ref{eq:uni1})  the first  assertion of  Lemma
  \ref{lem:uniform} immediately.  The second assertion follows  from  the first  assertion and  Proposition \ref{prop:rank}.
 This completes   the  proof  of  Lemma \ref{lem:uniform}. 
\end{proof}



\underline{Step 5}   In this  step we   shrink  the   chosen interval $[-\delta_4, \delta_4]$  to a  smaller  sub-interval $[-\delta, \delta]$   and  shrink   the  good neighborhood   $U(\Ff')$  to a  ``better''   neighborhood  $U_{c'} (\Ff')$,  where   $\Ff'$  is the subfamily of $\Ff$   with parameter  $\chi \in [-\delta, \delta]$.  This is necessary   for the   proof  of  different  energy estimates, which  we use  in establishing  a chain map  between the  chain complexes     arising from the map in (\ref{eq:chainh2}). 

First, $\delta= \delta (\Ht)$ must  satisfy  the following  two conditions.
\begin{equation}
|| \delta  \cdot \theta||_{ C^0 (S^1 \times  M^{2n})}  < \frac{\delta_1(\Ht)}{12}, \label{eq:d16}
\end{equation}
\begin{equation}
||d(h  ^0 _\chi - h ^0 _{\chi'})||_{C^0 (S^1 \times  M^{2n})}  < \frac{\delta_1(\Ht)}{12},  \text{ for any } \chi , \chi ' \in [-\delta, \delta].\label{eq:d16c}
\end{equation}
Further, we     define   
\begin{equation}
U_{c'} (\Ht_\chi) : = U_c (\Ht_\chi) \cap\{ \Ht_\chi + \pi^* (h_\chi)|\,  ||dh_\chi|| _{ C^0(S^1 \times  M^{2n})}< \frac{\delta_1(\Ht)}{12}\} .\label{eq:d16b}
\end{equation}


\begin{lemma}\label{lem:estn}  Let  $\chi \in [-\delta, \delta]$,  $\Ht_{\mu(0)}\in  U_{c'} (\Ht_0)$ and   $ \Ht_{\mu(\chi)}\in U_{c'} (\Ht_\chi)$.
Then   there  exists $\tilde h _{\mu(\chi)} \in C^\infty_{(*)} (S^1 \times  \Mt^{2n})$ such that
$$\Ht_{\mu (\chi)} = \Ht_{\mu(0)} + \tilde h _{\mu (\chi)},$$ 
$$||   d \tilde h_{\mu(\chi)}||_{ C^0(S^1 \times   M^{2n})}  < \frac{\delta_1 (\Ht)}{3}. $$
\end{lemma}

In Lemma \ref{lem:estn},  under   the norm $||   d \tilde h_{\mu(\chi)}||_{ C^0(S^1 \times   M^{2n})}$   we mean  the norm    $||\theta'||_{C^0(S^1 \times   M^{2n})}$  where $d \tilde h_{\mu(\chi)} = \pi^*(\theta')$.   The 1-form $\theta'$     exists  uniquely, since
$\tilde h _{\mu(\chi)} \in C^\infty_{(*)} (S^1 \times  \Mt^{2n})$.

\begin{proof} By  (\ref{eq:d16b}), (\ref{eq:htilde}), (\ref{eq:Htilde}), we have
$$\Ht_{\mu(\chi)} -\Ht_{\mu(0)}  = (\Ht_\chi  +\pi ^* (h_\chi)) - (\Ht_0  + \pi ^* (h_0))  ) $$
$$ =  \tilde h_\chi ^ 0  - \tilde h _0 ^0   +  \pi ^* (h_\chi) -\pi ^* (h_0)$$
$$ = \chi \cdot h ^ \theta  + (\pi ^* (h_\chi^0) -\pi ^* (h_0^0)) +  \pi ^* (h_\chi) -\pi ^* (h_0).$$
Now   using (\ref{eq:d16}, (\ref{eq:d16b})  and  (\ref{eq:d16c}) we obtain
$$||   d h_{\mu(\chi)}||_{ C^0(S^1 \times   M^{2n})}  < 4 \cdot ( \frac{\delta_1 (\Ht)}{12}) =\frac{\delta_1 (\Ht)}{3},$$
what is required   to  prove.
\end{proof}

  To   simplify    notations  we  shall    re-denote in  the remainder  of this subsection $c'$ as  $c$   and we let
$$CFN_* ^{red} (\Ht_{\mu(\chi)}),  \F) : =\Lambda^ \F _{\theta(\lambda - \delta, \lambda + \delta), \om} \otimes_{\F[\Gamma^0]} CFN_* ^0 (\Ht_{\mu(\chi)}),  \F)$$
for any $\chi \in [-\delta, \delta]$.

 Now  we are ready  to   define a  linear mapping   between  
  $CFN_* ^{red}(\Ht_{\mu(0)}, \F)$ and  $CFN_*^{red}(\Ht_{\mu(\chi)}, \F)$  for $\chi \in [-\delta, \delta]$.   
Let $\phi (s)$ be a monotone increasing smooth function on $[-R, R]$ which vanishes
near $-R$ and equals $1$ near $R$.  Taking into account Lemma \ref{lem:estn}, we set
with 
$$(J_s,\Ht_{s,t})=(J,(\Ht_{\mu(0)}) _t)  \text{~for~} s<-R,$$
$$(J_s,\Ht_{s,t})=(J,+  (\Ht_{\mu(\chi)})_t)  \text{~for~} s>R, $$
$$\Ht_{s,t}= (\Ht_{\mu(0)})_t + \phi (s) \cdot (\tilde h_{\mu (\chi)})_t.$$

We  consider the space  $\Mm([\xt, \wt^-], [\yt, \wt^+],\Ht_{s,t}, J_s)$  of the solution  $\ut: \R \times S^1 \to \Mt^{2n}$ of the following Floer  chain map equation  (cf: (\ref{eq:conl}),
\ref{eq:conb}), \ref{eq:conh})
\begin{equation} 
\frac{\partial u}{\partial s}+ J_s(u)(\frac{\partial u}{\partial t}-
X_{\Ht_{s,t}})=0, \label{eq:chainh}
\end{equation}
with the following boundary conditions
\begin{equation}
\lim_{s\to -\infty} = \xt \in \Pp (\Ff),\label{eq:chainb+}
\end{equation}
\begin{equation}
\lim_{s\to \infty} = \yt \in \Pp (\Ff),\label{eq:chainb-}
\end{equation}
\begin{equation}
[\xt, \wt^- \# \ut] = [\yt, \wt^+] \label{eq:chainhh}
\end{equation}

\begin{lemma}\label{lem:finite1}  $\Mm([\xt, \wt^-], [\yt, \wt^+], \Ht_{s,t}, J_s)$ is  a finite  set  if
$\mu([\xt, \wt^-]) =  \mu([\yt, \wt^+])$.
\end{lemma}

\begin{proof}  We define the  energy of   a solution  $\ut  \in  \Mm([\xt, \wt^-], [\yt, \wt^+], \Ht_{s,t}, J_s)$  as follows
$$E(u)  = \int_\infty ^\infty \int_0^1 |\frac{\p u}{\p s}| ^2 \, dt ds.$$
The following estimate has been obtained  in \cite{LO1995}. 

\begin{lemma}\label{lem:energy1} (\cite[Lemma 5.4]{LO1995}) 
Assume that $\vert d\tilde h_{\mu(\chi)} \vert_{C^0(S^1\times M^{2n})} <\delta_1/3$.  
For a solution $\tilde u$ of (\ref{eq:chainh}) satisfying (\ref{eq:chainb+}, \ref{eq:chainb-}) and  (\ref{eq:chainhh})
we have
$$E(\tilde u) \leq 3(\Aa_{\Ht_{\mu(0)}}([\xt,\tilde w^+])-\Aa_{\Ht_{\mu(\chi)}}([\yt,\tilde w^-])).$$
\end{lemma}
Recall that our choice of $\tilde h_{\mu(\chi)}$  satisfies the estimate in Lemma \ref{lem:estn}  and hence  the condition of Lemma \ref{lem:energy1}  is fulfilled.
 Hence  the energy  of   a solution  $\ut  \in  \Mm([\xt, \wt^-], [\yt, \wt^+], \Ht_{s,t}, J_s)$  is uniformly bounded.
The  weak  compactness theorem   yields    Lemma \ref{lem:finite1}  immediately.
\end{proof}

Now we  define  a   map $\psi:  \Pp(\Ht)  \to  CFN_*^{red} (\Ht_{\mu (\chi)}, \F)$ as  follows
\begin{equation}
\psi (\xt): = \sum_{\mu(\yt) = \mu (\xt)} m (\xt, \yt) \cdot \yt \label{eq:chainh2}
\end{equation}
where $m (\xt, \yt)$ denotes the  algebraic  cardinality  of $\Mm(\xt, \yt, \Ht_{s,t},  J_s)$.

Lemma \ref{lem:finite1}   and the existence  of  a coherent  orientation on the moduli  space of    the solutions of the Floer chain map equation
imply that  the     number $m(\xt, \yt)$ is well  defined,  but there  are possibly  infinitely many   terms in the   RHS of (\ref{eq:chainh2}).
So we need the following

\begin{lemma}\label{lem:5.5} Given $\xt \in \Pp (\Ht)$  and a  number $ C \in \R$   there  exists only finitely many  $\yt$  in  the  RHS of (\ref{eq:chainh2})  such that   $\Aa_{\Ht_{\tau}} (\yt) > C$
for any $\tau \in [-\delta, \delta]$.
\end{lemma}

\begin{proof} This Lemma  is an  analogue of   the relation (\ref{eq:finite}).  As   the  proof of  (\ref{eq:finite})  is based on
the uniform   energy estimate in Lemma \ref{lem:onolem34}    our  proof is based  on the  following estimate  in \cite[Lemma 3.5]{Ono2005}  for  a solution $\ut   \in  \Mm([\xt, \wt^-], [\yt, \wt^+], \Ht_{s,t}, J_s)$, which states  that
\begin{equation}
\Aa_{\Ht} ([\xt, \wt^-]) - \Aa_{\Ht}([\yt, \wt^+]) > \frac{\delta_1(\Ff)}{6} \rho (\xt, \yt).\label{eq:ono35}
\end{equation}
Repeating the argument in the  proof  of  (\ref{eq:finite}), using (\ref{eq:ono35})  instead  of Lemma \ref{lem:onolem34},   we obtain   immediately Lemma \ref{lem:5.5}.
This   completes  the  proof  of Lemma \ref{lem:5.5}.
\end{proof}

\

\underline{Step  6}.  In this  step   we complete  the  proof of Proposition \ref{prop:bettismall}.
By Lemmas \ref{lem:finite1}, \ref{lem:5.5},  the  map $\psi$ in (\ref{eq:chainh2}) extends   uniquely  as  a chain map, which we also denote by $\psi$:
$$\psi: CFN_*^{red} (\Ht_{\mu(0)}, \F)  \to CFN_*^{red} (\Ht_{\mu (\chi)}, \F). $$
We need to show that   $\psi$  is a chain homotopy equivalence.  
The  proof of    the chain homotopy equivalence      is  proceeded  using   standard arguments  as in   the  proof of  \cite[Theorem 5.3]{LO1995},  see also \cite[Theorem 4.6]{Ono2005}. 
Combining with Lemma \ref{lem:uniform}, this completes  the proof  of Proposition \ref{prop:bettismall}
\end{proof}
As  we have remarked,   Proposition \ref{prop:bettismall} yields   Theorem \ref{thm:small}.
\end{proof}


\subsection{Computing the Betti numbers of   $HFN_*(\pi^*(H),\F)$}\label{subs:comp} Let $\F$ be a field.
 In this  subsection  we      compare the  sum of the Betti numbers of the Floer-Novikov homology $HFN_*(\pi^*(H),\F)$ with   the  sum  of the Betti  numbers  of the Novikov homology  $HN_*(M, [\theta], \F)$. 
  It is known that the  latter ones can be computed 
via  the refined Morse  complex $CM_* (\pi ^* (f), \p ^{Morse}, \F)$  with coefficients in $\F$ of a  lifted  Morse  function $f\in C^\infty (M^{2n})$,
see  e.g. \cite{Farber2004, Pajitnov2006}. 
Recall that $\Lambda  ^\F_{0, \om}$ is the  completion of  $\F[\Gamma ^0]$  w.r.t.   the weight homomorphism $0 \oplus - I_\om$, see  (\ref{eq:novikov}).
By Proposition \ref{prop:direct2}, $\Lambda^\F_{0,\om}  = \F[\Gamma_1]((-I_\om(\Gamma_2^0)))$. 
Thus  $\Lambda_{0, \om}^\F$  is   the underlying   ring of the   Floer-Novikov chain complex   $CFN_*(\pi^*(H), \F)$, which is also called {\it  a  refined   Floer  chain complex}  by Ono-Pajitnov \cite{OP2014}, see also \cite{LO2015} for   further discussion. 
%

\begin{theorem}\label{thm:comp2}  Assume that $J$ is a regular compatible almost complex structure   and  $H\in C^\infty (S^1 \times M^{2n})$  is nondegenerate  and $J$-regular.
There  is  a chain isomorphism  between  the
Floer-Novikov chain complex  $CFN_*( \pi^*(H), J, \F)$  and the $\Z_{2N}$-graded extended Novikov  chain complex $(\Lambda _{0,\om}^\F  \otimes _{\F[\Gamma_1]} \tau_{2N}(CM_{*+n}(\tilde  f)), Id \otimes\p ^{Morse})$,
where $\tau_{2N}$  denotes  the natural projection  of $\Z$-grading  to $\Z_{2N}$-grading.  Consequently\\
   $\sum b_i (HFN_*(\pi^*(H), \F)  = \sum b_i (HN_*(M; [\theta], \F)).$
\end{theorem}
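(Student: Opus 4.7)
My plan is to reduce to the Floer-Hofer-Salamon setting in which the Floer-Novikov complex literally coincides with a Morse-Novikov complex on the cover $\Mt^{2n}$. By Proposition \ref{prop:iso}, the complex $CFN_*(\pi^*(H), J, \F)$ is chain homotopy equivalent, for generic choices, to $CFN_*(\pi^*(\eps f), J, \F)$ whenever $f \in C^\infty(M^{2n})$ is a Morse function and $\eps > 0$. Since both the chain isomorphism statement and its Betti number corollary are homotopy invariants of $CFN_*$, I would first replace $H$ by such an $\eps f$, with $\eps$ so small that the standard $C^2$-smallness arguments apply.

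For $\eps$ small, the one-periodic orbits of $X_{\eps f}$ on $M^{2n}$ are precisely the constant loops at critical points of $f$, and hence $\Ppt(\pi^*(\eps f))$ consists of constant loops at the critical points $\tilde p$ of $\tilde f := \pi^*f$. A generator $[\xt_{\tilde p}, \wt]$ of $CFN_k(\pi^*(\eps f), \F)$ is then parameterized by a critical point $\tilde p$ and an equivalence class of bounding disc modulo $\ker I_{c_1}\cap\ker I_\om$, i.e.\ by an element of $\Gamma_2^\circ$. Its Conley-Zehnder index equals $\mathrm{ind}(\tilde f, \tilde p) - n + 2c_1(\pi(\wt))$ mod $2N$; since $I_{c_1}$ vanishes on $\Gamma_2^\circ$ the $\Gamma_2^\circ$-action preserves the $\Z_{2N}$-grading, and assembling the generators produces the $\Lambda^\F_{0,\om}$-linear identification
\begin{equation*}
CFN_k(\pi^*(\eps f), \F) \ \cong \ \Lambda^\F_{0,\om}\otimes_{\F[\Gamma_1]}\tau_{2N}\bigl(CM_{k+n}(\tilde f, \F)\bigr).
\end{equation*}

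The next step is to match differentials. For $\eps$ sufficiently small and $J$ regular, the Hofer-Salamon argument (lifted equivariantly to $\Mt^{2n}$) shows that every element of $\Mm([\xt_{\tilde p}, \wt], [\yt_{\tilde q}, \vt]; \pi^*(\eps f), J)/\R$ of relative Conley-Zehnder index one is $t$-independent and hence a $g_J$-gradient trajectory of $\tilde f$ on $\Mt^{2n}$, and conversely every such Morse trajectory is cut out transversally for generic $g_J$. The relative index formula forces $\wt=\vt$ in $\Gamma_2^\circ$ in this case, so the Floer differential is $\F[\Gamma_2^\circ]$-linear and equals $\mathrm{Id}\otimes \p^{Morse}$ after completion. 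Sphere bubbling in these one-dimensional moduli is excluded by weak monotonicity together with the $J$-regularity conditions of Remark \ref{rem:comment1}, exactly as in \cite{HS1994,LO1995}.

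For the Betti number identity, tensoring the chain isomorphism with $F(\Lambda^\F_{0,\om})$ and using flatness gives $b_i(HFN_*(\pi^*(\eps f), \F))$ as the $F(\Lambda^\F_{0,\om})$-rank of the $\Z_{2N}$-projected Morse-Novikov homology of $\tilde f$ on $\Mt^{2n}$. By Proposition \ref{prop:direct2}, $\Lambda^\F_{0,\om}\cong \F[\Gamma_1]((-I_\om(\Gamma_2^\circ)))$, and its fraction field extends the fraction field of the classical Novikov ring $\F((\Gamma_1, I_\theta))$ of $[\theta]$; the $\Z_{2N}$-reduction merely regroups the $\Z$-graded Morse-Novikov Betti numbers, so summing over $i$ yields $\sum_i b_i(HFN_*(\pi^*(H),\F))=\sum_i b_i(HN_*(M;[\theta],\F))$. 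I expect the main obstacle to be the Floer-to-Morse reduction on the noncompact cover $\Mt^{2n}$: although $M^{2n}$ is compact and weakly monotone, the $L^\infty$ and energy estimates have to be lifted equivariantly along $\Gamma_1$, and one must verify that the refined Novikov completion $\Lambda^\F_{0,\om}$ accommodates the infinitely many $\Gamma_1$-translates of each generator without destroying $\p^2=0$ or the freeness over $\Lambda^\F_{0,\om}$ asserted in Lemma \ref{lem:chain}.
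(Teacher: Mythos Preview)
Your approach is correct but follows a genuinely different route from the paper. The paper uses the Piunikhin--Salamon--Schwarz construction: it builds explicit chain maps $\Phi$ and $\Psi$ between $CFN_*(\pi^*(H),J,\F)$ and the extended Morse--Novikov complex by counting moduli spaces of \emph{spiked disks} (perturbed holomorphic half-cylinders attached to a gradient flow line of $\tilde f$), proves an energy estimate (Lemma~\ref{lem:energy2}) to obtain compactness, and then argues that $\Phi\circ\Psi$ and $\Psi\circ\Phi$ are chain homotopic to the identity. You instead invoke Proposition~\ref{prop:iso} to replace $H$ by a $C^2$-small autonomous Morse Hamiltonian $\eps f$ and identify the Floer complex of $\pi^*(\eps f)$ with the Morse--Novikov complex directly, via the classical Floer/Hofer--Salamon argument that index-one Floer trajectories of a small autonomous Hamiltonian are $t$-independent. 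Your route is more elementary in that it avoids the spiked-disk analysis, while the PSS route applies to the given $H$ without first deforming and yields explicit comparison maps that are sometimes useful in their own right. One small point: your claim that the ``chain isomorphism statement'' is a homotopy invariant of $CFN_*$ is not correct as stated---chain isomorphism is strictly stronger than chain homotopy equivalence, so passing through Proposition~\ref{prop:iso} only transports the latter. However, the paper's own PSS argument (Theorem~\ref{thm:pssn}) also only establishes a chain homotopy equivalence, so the gap is in the phrasing of the theorem rather than in your reduction; the Betti-number identity, which is all that feeds into Corollary~\ref{cor:betti}, follows from either approach.
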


Theorem \ref{thm:comp2}  is a partial  case  of Theorem 2.8  in \cite{OP2014}  which considers    refined  Floer complexes  over $\Z$, see also
Theorem  2.9 in \cite{OP2014}   concerning  general compact symplectic manifolds.    Ono and Pajitnov omit  the   proof of    \cite[Theorems  2.8, 2.9]{OP2014}, which  can be  done in the same way as  in the   Floer  homology  theory.   For the  reader  convenience   we shall outline   the  proof
of  Theorem  \ref{thm:comp2}, using the Piunikhin-Salamon-Schwarz scheme  for comparing  Floer  chain complexes with  Morse chain  complexes  \cite{PSS1996}.  We use the analytical   framework  developed by Oh-Zhu in \cite{OZ2011}  and by Lu in \cite{Lu2004}, see also  \cite{OZ2012} for another  analytical  approach.

{\it Outline of the proof of Theorem \ref{thm:comp2}}.
Let  $H \in  C^\infty (S^1 \times M^{2n})$  be   a  nondegenerate  Hamiltonian and $J$-regular for   some    regular  compatible almost complex structure  $J$.  
We  shall define   a chain  map  
$$\Phi: (\Lambda_{0, \om}^\F\otimes _{\F[\Gamma_1]} \tau_{2N}(CM_* (\pi^*( f))), Id \otimes\p ^{Morse}) \to  CFN_*(\pi^*(H), J, \F) $$
and   a chain  map
$$ \Psi: CFN_*(\pi^*(H), J)  \to   (\Lambda_{0, \om}^\F \otimes _{\F[\Gamma_1]} \tau_{2N}(CM_* (\pi^*(f))), Id \otimes\p ^{Morse})$$
by  defining  the value of   $\Phi$ and $\Psi$ on  the  generators  of each involved module, and then  extending    $\Phi$ and  $\Psi$ linearly over the   ring  $\Lambda  _{0,\om}^\F$.

Recall that  the generators  of $\Lambda_{0, \om}^\F\otimes _{\F[\Gamma_1]} CM_* (\pi^*(f))$ are critical points   of  $\tilde f$, which we shall denote by $\tilde p$. Note  that   $\tilde p : = \pi^{-1} (p)$, where $p$  are  critical points  of $f$.
The generators   of   $CFN_*(\pi^*(H), J)$
are  the critical points  $[\xt,  \wt] \in \Ppt (\pi^*(H))$ of the action functional $\Aa _{\pi^*(H)}$.  Similarly we have
$\Ppt (\pi^*(H))  = \pi^{-1}(\Ppt (H))$.
 
We define  $\Phi$ by the following formula  
\begin{equation}
\Phi (\tilde p) : = \sum _{[\tilde  x, \tilde w] \in \Ppt(\pi^{-1} (H))} m (\tilde p,   [\tilde x,   \tilde w]; A ) [\tilde  x, \tilde w] e ^ {-A}, \label{eq:Phi} 
\end{equation}
where $A \in \Gamma_0$. 
We   define $\Psi$ as follows
\begin{equation}
\Psi ([\tilde x, \tilde w]): = \sum_{\tilde p \in Crit (\tilde  f)} n ([\tilde  x, \tilde w], [\tilde  p, A] )  \tilde  p\cdot e^ A, \label{eq:Psi}
\end{equation}
where  $A  \in \Gamma_0$.    The coefficient $m (\tilde p,   [\tilde x,   \tilde w]; A )$  (resp. $n ([\tilde  x, \tilde w], [\tilde  p, A])$)    will be defined
as  the algebraic cardinal of the moduli space  $\Mm (\tilde  p,  [\tilde x,   \tilde w]; A )$  (resp. $\Mm ([\tilde  x, \tilde w], [\tilde  p, A])$) of spiked disks  that consist of (perturbed) holomorphic  disks   equivalent to $ [A\# -\tilde w]$   and bounding  $\tilde  x$ and  of
 a spike   which is a gradient flow line from  $\tilde p$  to a  holomorphic disk in consideration.

In the first  step  we shall  describe     the moduli spaces of  involved spiked disks. Then  we shall  show that    $\Phi$ and $\Psi$ are well-defined  and
 $\Gamma^0$-invariant  and therefore   extensible  over $\Lambda^\F_{0, \om}$.  Finally  we shall  show that   $\Psi $ and $\Phi$ are  chain  homotopy equivalences.

\

{\it Description of the moduli  space $\Mm (\tilde  p,  [\tilde x,   \ut]; A )$}.   Let 
$\dot \Sigma_+$   (resp.  $\dot\Sigma _-$) denote the  the punctured  sphere 
with one    marked point  $o_+$  (resp.  $o_-$)  and one positive   puncture $e_+$ (resp. $e_-$).
We fix  an identification   
\begin{equation}
\dot\Sigma_\pm \setminus  \{  o _\pm\}  \cong\R \times  S^1\label{eq:can}
\end{equation}
 and denote  by  $(\tau, t)$  the corresponding  coordinates   so that  
$\{ \mp \infty\} \times S^1$  and  $\{ \pm \infty\}  \times S^1 $ correspond   to $o_\pm$ and   $e_\pm$  respectively.

  Set     
$$ K_\pm(s, t, x) : = \beta _\pm (s) \cdot \pi^*(H) (t, x), $$
where  $\beta_\pm : \R \to [0,1]$  is a smooth cut-off function  given by
\begin{eqnarray*}
\beta_+ (s)  & =  & 0  \text{ for } s \le 0,\\
  &    =   & 1 \text{ for } s \ge 1,\\
	\beta_- (s) & : =  &  \beta_+ (-s)
\end{eqnarray*}
such that
$$ 0< s _+  ' (\tau) < 2 \text{ for } s \in (0,1).$$





 For    a  regular   compatible almost complex structure $J$ on $M^{2n}$ we define  a perturbed  Cauchy-Riemann operator
$\bar \p _{(K_\pm, J)}$   acting on the  space of smooth  mappings $u_\pm : \dot \Sigma_\pm \setminus  \{  o _\pm\}  \to \Mt$ as follows.
$$\bar \p _{(K_\pm, J)} u_\pm : = \p_s u + J (u)  (\p_t  u  - \beta_\pm(s)  X_{\pi^* (H)}).$$
 Assume  that $\bar \p _{(K_\pm, J)}u = 0$   and   the  energy
$$E ( u _\pm): = \int_\infty ^ \infty |\p _s  u _\pm |^2 _{ g_J}  \, ds dt < + \infty.$$
Then  $u_\pm$  extends to  $\dot\Sigma_\pm$, since nearby   the marked  point $o_\pm$ the  perturbation term  $K_\pm$  vanishes.
%




We set
\begin{eqnarray*}
\Mm_\pm (\tilde  p,  [\tilde x,   \wt]; A )  & = & \{ (\chi_\pm, u_\pm)| u _\pm : \dot\Sigma_\pm \to \Mt, \, [u_\pm\#  \wt]  = A,\\
                         &   & u_\pm(\pm\infty, t) = \tilde x (t), \bar\p_{(K_\pm, J)}  u_\pm  = 0, \\
	&  & 	\dot  \chi _\pm  = \nabla (\tilde f (\chi_\pm)), \, \chi_\pm (\mp \infty) = \tilde p, \, \chi_\pm (0) = u_\pm (o_\pm)\}.
\end{eqnarray*}
In the  above  expression   we require that
$$ m(\xt, \wt]) = n - (\mu_{ ( \pm  f)}(p) $$
so that
$$\dim \Mm_\pm (\tilde  p,  [\xt,   \wt]; A] )  = 0.$$
Then we  set
\begin{equation}
m (\tilde p, [\xt, \wt]; A): = \#\Mm_+ (\tilde  p,  [\xt,   \wt]; A_+ ). \label{eq:Phi}) 
\end{equation}

Similarly we set
\begin{equation}
n ([\xt, \wt],[\tilde p,  A]): = \#\Mm_- (\tilde  p,  [\tilde x,   \tilde w]; A ). \label{eq:Psi}) 
\end{equation}

\begin{theorem}\label{thm:pssn} The maps   $\Phi$ and  $\Psi$ defined   in  (\ref{eq:Phi}) and (\ref{eq:Psi})     are well-defined   and they are $\Gamma_1$-equivariant.
Furthermore, the compositions $\Psi\circ \Phi$ and $\Phi\circ   \Psi$    are    chain homotopy equivalent to the  identity.
\end{theorem}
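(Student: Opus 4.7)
The plan is to follow the standard Piunikhin--Salamon--Schwarz scheme, as in \cite{PSS1996}, adapted to the Novikov covering setting using the analytical framework of Oh--Zhu \cite{OZ2011} and Lu \cite{Lu2004}. The whole argument rests on three pillars: transversality, compactness/energy, and gluing.

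First I would verify that the moduli spaces $\Mm_\pm(\tilde p, [\xt,\wt]; A)$ are cut out transversally for a generic choice of data $(J,H,f)$ satisfying the standing regularity hypotheses. Because the perturbation term $K_\pm$ vanishes near the marked point $o_\pm$, the linearization of $\bp_{(K_\pm,J)}$ has Fredholm index equal to $\mu([\xt,\wt]) - (n - \mu^{\mathrm{Morse}}(\pm f)(p))$, which matches the prescribed dimension formula and yields $\dim\Mm_\pm = 0$ in the range used to define the coefficients. Transversality for the spiked disks is then obtained by the usual universal moduli argument (the spike conditions $\chi_\pm(\mp\infty)=\tilde p$ and $\chi_\pm(0)=u_\pm(o_\pm)$ are transverse evaluations provided the underlying moduli of perturbed $J$-holomorphic disks are regular). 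Next, the coefficients $m(\tilde p,[\xt,\wt];A)$ and $n([\xt,\wt],[\tilde p,A])$ are counted with signs using a coherent orientation, just as in the Floer case.

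The next step is to show that $\Phi$ and $\Psi$ land in the correct completed modules, i.e.\ satisfy the Novikov finiteness condition. The decisive input is the energy identity
\[
E(u_\pm) \;=\; \mp\Aa_{\pi^*(H)}([\xt,\wt]) \;+\; \pi^*(H)\text{-boundary terms},
\]
which, combined with the nonnegativity of $E(u_\pm)$ and an estimate of the spike's contribution by $|\tilde f(\tilde p)-\tilde f(u_\pm(o_\pm))|$, shows that for fixed $\tilde p$ (resp.\ $[\xt,\wt]$) and any constant $C$ only finitely many classes $A\in\Gamma^\circ$ with $-I_\omega(A) < C$ can support a nonempty zero-dimensional moduli space. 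This is the same argument used to establish boundedness in Lemma \ref{lem:onolem34}, now applied to the PSS equation rather than to genuine Floer connecting orbits. The $\Gamma_1$-equivariance is immediate, since $\pi^*(H)$, $\tilde f$, $J$ (lifted) and the orbits $\xt\in\pi^{-1}(\Pp(H))$ are all $\Gamma_1$-invariant, and the action of $\Gamma_1$ permutes the moduli spaces $\Mm_\pm$ bijectively and orientation-preservingly; therefore $\Phi$ and $\Psi$ extend $\Lambda^\F_{0,\om}$-linearly.

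The chain map property $\Phi\circ(\Id\otimes\p^{\mathrm{Morse}}) = \p_{(J,\pi^*H)}\circ\Phi$ (and the analogous one for $\Psi$) is obtained by analyzing the boundary of the one-dimensional moduli spaces $\Mm_+(\tilde p,[\xt,\wt];A)$ with $\dim = 1$: by the standard compactness + gluing package, their compactification is a one-manifold with boundary, and the boundary components correspond precisely to broken trajectories of one of two types, namely a gradient flow line of $\tilde f$ concatenated with a PSS spike, or a PSS spike concatenated with a Floer cylinder, whose signed count reproduces the two sides of the chain map identity.

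Finally, the chain homotopy equivalences $\Psi\circ\Phi\simeq\Id$ and $\Phi\circ\Psi\simeq\Id$ follow from the now-classical one-parameter deformation argument: consider the family of cutoff data obtained by gluing $\dot\Sigma_+$ and $\dot\Sigma_-$ along a tube of length $R\in[0,\infty)$ (respectively by gluing two Floer half-cylinders across a Morse flow line of length $R$), and study the resulting parametrized moduli space. At $R=+\infty$ its boundary reproduces the composition $\Psi\circ\Phi$ (resp.\ $\Phi\circ\Psi$), while at $R=0$ the cutoff profile becomes trivial and one reads off the identity map (after standard identifications); the signed count of the one-dimensional parametrized moduli then yields the chain homotopy operator. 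The main technical obstacle is the gluing/compactness analysis for the mixed PSS--Morse--Floer moduli spaces in the presence of weakly monotone bubbling; here I would invoke the weak-monotonicity + minimal-Chern-number hypothesis exactly as in \cite{HS1994,LO1995} to rule out codimension-one sphere bubbling in the relevant strata, so that the moduli space boundaries consist solely of the breakings enumerated above. Combining everything shows $\Phi$ and $\Psi$ are mutually inverse chain homotopy equivalences, completing the proof.
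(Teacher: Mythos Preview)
Your proposal is correct and follows essentially the same Piunikhin--Salamon--Schwarz scheme as the paper's outline: an energy estimate on the perturbed disks gives compactness and Novikov finiteness, $\Gamma_1$-invariance of all the lifted data yields the equivariance, and the chain-homotopy identities are deferred to the standard gluing/deformation package in the Floer literature. One small correction: the Novikov finiteness for $\Phi$ and $\Psi$ comes from the direct energy computation the paper records as Lemma~\ref{lem:energy2} (namely $E(u_+)\le \om(A)-\Aa_H([x,w])+2\max H$), not from the distance-based lower bound of Lemma~\ref{lem:onolem34}, which serves the different purpose of refining the Floer boundary operator to the smaller ring $\Lambda^\F_{\theta(-\tau,\tau),\om}$.
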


{\it Outline of the proof}. 
The first   assertion of   Theorem \ref{thm:pssn}    follows   from the   compactness   and the coherent  orientability  of  the moduli  spaces  $\Mm_\pm (\tilde  p,  [\xt,   \wt]; A)$.  The compactness is  proved by
   using    an  upper estimation for  the energy of   the  perturbed holomorphic  disks $u_\pm$  involved in the   moduli spaces  in consideration.  Recall that
\begin{equation}
E(u_\pm) : = E(u_\pm |_{\dot \Sigma_\pm  \setminus  \{ o_\pm \}} ) = \int_{-\infty} ^\infty \int_0 ^ 1 | \p _\tau  u| _{g_J} ^2   dt  d\tau. \label{eq:energy}
\end{equation}

\begin{lemma}\label{lem:energy2}   Assume that  $u_+ \in \Mm_+ (\tilde  p,  [\xt,   \wt]; A)$. Then
$$E(u)  \le  \om (A) - \Aa_H ([x, w]) + 2 \max_{ (t, x) \in S^1 \times M^{2n}}   H (t, x).$$
\end{lemma}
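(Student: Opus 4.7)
The plan is to run the standard PSS-type energy computation, using the perturbed Floer equation to rewrite $|\partial_s u|^2$ as a sum of a topological term (integrating $u^*\omega$) and a term involving $\partial_s(H\circ u)$, and then integrating by parts in the $s$-variable to extract the action functional value at $[\xt,\wt]$ plus an error controlled by $\max H$.

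First I would note that from the perturbed Cauchy--Riemann equation
$\partial_s u + J(u)(\partial_t u - \beta_+(s) X_{\pi^*(H)}) = 0$
together with the compatibility relation $g_J(V,W)=\omega(V,JW)$, one computes
\[
|\partial_s u|^2_{g_J} \;=\; \omega(\partial_s u,\partial_t u) \;-\; \beta_+(s)\,\omega(\partial_s u, X_{\pi^*(H_t)}).
\]
Using $\iota_{X_{H_t}}\omega = -dH_t$ (the convention fixed in the introduction by $\langle L_\omega V,W\rangle=-\omega(V,W)$), the second factor equals $dH_t(\partial_s u) = \partial_s(H(t,u(s,t)))$. Integrating over $\R\times S^1$ (after identifying $\dot\Sigma_+\setminus\{o_+\}\cong\R\times S^1$; the perturbation vanishes for $s\le 0$ so $u_+$ extends smoothly over $o_+$ and the domain of integration is in effect the full disk) yields
\[
E(u_+) \;=\; \int u_+^*\omega \;-\; \int_{-\infty}^{\infty}\!\!\int_0^1 \beta_+(s)\,\partial_s\bigl(H(t,u(s,t))\bigr)\,dt\,ds.
\]

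Next, the topology enters: since $[u_+\#\wt]=A$ in $\pi_2(M^{2n})$, the disk $u_+$ (viewed as a cap of $\xt$) is homotopic rel $\xt$ to $A\#\wt$, so $\int u_+^*\omega = \omega(A) + \int \wt^*\omega$. Combined with the definition $\Aa_H([x,w]) = -\int\wt^*\omega + \int_0^1 H(t,x(t))\,dt$, this gives
\[
\int u_+^*\omega \;=\; \omega(A) \;-\; \Aa_H([x,w]) \;+\; \int_0^1 H(t,x(t))\,dt.
\]
Integration by parts in $s$ handles the remaining term. Writing $\beta_+\,\partial_s H = \partial_s(\beta_+ H) - \beta_+'\,H$, using $\beta_+(-\infty)=0$, $\beta_+(+\infty)=1$, and $u(+\infty,t)=\xt(t)$, one gets
\[
\int_{-\infty}^\infty \beta_+(s)\,\partial_s H(t,u(s,t))\,ds \;=\; H(t,\xt(t)) \;-\; \int_{-\infty}^\infty \beta_+'(s)\,H(t,u(s,t))\,ds.
\]
Substituting back, the two occurrences of $\int_0^1 H(t,x(t))\,dt$ cancel, leaving
\[
E(u_+) \;=\; \omega(A) \;-\; \Aa_H([x,w]) \;+\; \int_0^1\!\!\int_{-\infty}^\infty \beta_+'(s)\,H(t,u(s,t))\,ds\,dt.
\]
Finally, since $\beta_+'\ge 0$ with $\int_{-\infty}^\infty \beta_+'(s)\,ds=1$, the last double integral is bounded by $\max_{(t,x)\in S^1\times M^{2n}}H(t,x)$ (and certainly by $2\max H$ if one prefers the slightly loose bound stated; alternatively, the pointwise bound $\beta_+'<2$ gives $\le 2\max H$ directly). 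This yields the claimed inequality.

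The only real subtlety I expect is bookkeeping of signs: the orientation convention implicit in $[u_+\#\wt]=A$ and the sign in the Hamilton equation $\iota_{X_H}\omega=-dH$ must be consistently applied so that the term $\int_0^1 H(t,\xt(t))\,dt$ cancels cleanly after integration by parts. Everything else is a standard manipulation.
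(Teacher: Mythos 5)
Your proof is correct and follows essentially the same route as the paper: rewrite $|\partial_s u|^2$ via the perturbed Cauchy--Riemann equation as a topological $\omega$-term minus a $\beta_+\,\partial_s H$ term, use $[u_+\#\wt]=A$ and the definition of $\Aa_H$ to identify $\omega(A)-\Aa_H([x,w])$, and integrate by parts in $s$ so the boundary contribution $\int_0^1 H(t,\xt(t))\,dt$ cancels, leaving the error term $\int\!\!\int\beta_+'\,H$. You even obtain the sharper constant $\max H$ from $\int\beta_+'=1$, while the paper settles for $2\max H$ via the pointwise bound $\beta_+'<2$.
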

\begin{proof} We write
$$ E(u_+)  = \int_{-\infty} ^\infty \int_0 ^ 1 | \p _\tau  u| _{g_J} ^2   dt  d\tau  = \int_{-\infty} ^{\infty} \int _0 ^1 \la  \p_\tau u,  J \p_t u + \beta(\tau)\nabla  H( t, u) \ra dt d\tau $$
$$ =  \om (A)   -\om (\tilde w) - \int_{-\infty} ^\infty\int _0^1 \beta(\tau) {d\over ds} H( t, u)\, dt d\tau $$
$$ = \om (A) -\Aa_{\Ht} ([\tilde x, \tilde w]) +\int_0 ^1  \beta ' (\tau) \int_0 ^1  H(t,  u)\, dt d\tau ,$$
since  $\beta '  (s) = 0$ for  $s \in \R \setminus [0,1]$.    Taking into account $0 \le  \beta _+ (s) ' \le  2$, we obtain Lemma  \ref{lem:energy2} immediately.
\end{proof}

Lemma   \ref{lem:energy2}  provides  the    weak  compactness   of   the  moduli space $\Mm_+ (\tilde  p,  [\xt,   \wt]; A)$.
In the same  way we   obtain the    weak  compactness   of   the  moduli space $\Mm_- (\tilde  p,  [\xt,   \wt]; A)$.
The regularity of $J$ and  the $J$-regularity of $H$  yield  the compactness  of the moduli  spaces  in considerations.  The  coherent   orientation of   the moduli   space is defined
  as in the  \cite{FH1993}.
  
 \
  
The  second  assertion of  Theorem  \ref{thm:pssn}    follows  from the fact, that on the covering  space $\Mt$  all    objects  under consideration are $\Gamma_1$-invariant.
Finally     the  last assertion   of Theorem \ref{thm:pssn}  is  proved in the  same way  as   in the Floer homology case, so we omit  the   proof.

We obtain immediately from Theorems \ref{thm:comp}, \ref{thm:comp2}  the following.

\begin{corollary}\label{cor:betti}  For any  nondegenerate $J$-regular  Hamiltonian function $\Ht \in  C^\infty _{(*)} (S^1 \times \Mt)$ we have
$$\sum_i b_i (HFN_*(\Ht, \F)) = \sum_i  b_i (HN_*(M, [\theta]), \F).$$
\end{corollary}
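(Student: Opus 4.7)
The plan is to obtain Corollary \ref{cor:betti} as a direct combination of Theorems \ref{thm:comp} and \ref{thm:comp2}, with only a small auxiliary argument needed to bridge them. The point is that Theorem \ref{thm:comp2} is formulated for Hamiltonians of the special form $\pi^*(H)$, i.e.\ for elements of $C^\infty_{(0)}(S^1 \times \Mt^{2n})$ (weight $\lambda = 0$), whereas Corollary \ref{cor:betti} concerns arbitrary $\Ht \in C^\infty_{(*)}(S^1 \times \Mt^{2n})$; Theorem \ref{thm:comp} is precisely the invariance statement that allows this reduction.

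First I would invoke Theorem \ref{thm:comp} to conclude that the Betti numbers $b_i(HFN_*(\Ht,\F))$ are independent of the choice of nondegenerate $J$-regular Hamiltonian in $C^\infty_{(*)}(S^1 \times \Mt^{2n})$. In particular, for computing $\sum_i b_i(HFN_*(\Ht,\F))$, I may replace the given $\Ht$ by a convenient representative in the weight-zero stratum $C^\infty_{(0)}(S^1 \times \Mt^{2n})$, namely a Hamiltonian of the form $\pi^*(H)$ with $H \in C^\infty(S^1 \times M^{2n})$ nondegenerate and $J$-regular. Such an $H$ exists by the standard density results for nondegenerate Hamiltonians combined with the transversality arguments recalled in Remark \ref{rem:comment1}; this is the only auxiliary fact beyond the two theorems.

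Next I would apply Theorem \ref{thm:comp2} to this chosen $\pi^*(H)$, which supplies the chain isomorphism between $CFN_*(\pi^*(H), J, \F)$ and the $\Z_{2N}$-graded extended Novikov complex
$\bigl(\Lambda^\F_{0,\om} \otimes_{\F[\Gamma_1]} \tau_{2N}(CM_{*+n}(\tilde f)), \, Id \otimes \partial^{Morse}\bigr)$,
yielding $\sum_i b_i(HFN_*(\pi^*(H),\F)) = \sum_i b_i(HN_*(M, [\theta], \F))$. Chaining these two equalities gives
\[
\sum_i b_i(HFN_*(\Ht,\F)) \;=\; \sum_i b_i(HFN_*(\pi^*(H),\F)) \;=\; \sum_i b_i(HN_*(M, [\theta], \F)),
\]
which is the desired identity.

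There is essentially no hard step here: all of the analytic work has been carried out in Section \ref{sec:betti} (Novikov-ring integrality in Proposition \ref{prop:compono}, the reduction to small Novikov subrings in Theorem \ref{thm:small} and Proposition \ref{prop:rank}, the local invariance of Betti numbers under changes of weight in Proposition \ref{prop:bettismall}, and the PSS identification in Theorem \ref{thm:comp2}). The only conceptual content of the corollary is the observation that these results combine, via the weight-invariance from Theorem \ref{thm:comp}, to eliminate the restriction $\lambda = 0$ present in Theorem \ref{thm:comp2}. If any subtlety were to arise, it would be in verifying that the chosen representative $\pi^*(H)$ can be taken simultaneously nondegenerate and $J$-regular for the same $J$ used in computing $b_i(HFN_*(\Ht,\F))$; but this is a standard transversality argument and does not require a new estimate.
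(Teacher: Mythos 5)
Your proposal is correct and is precisely the argument the paper intends: the paper states Corollary \ref{cor:betti} follows immediately from Theorems \ref{thm:comp} and \ref{thm:comp2}, and your write-up simply spells this out (invariance of Betti numbers in the weight parameter, then reduce to the weight-zero case $\pi^*(H)$ and apply the PSS identification). The remark about choosing a nondegenerate $J$-regular representative $H$ is a standard transversality point that the paper leaves implicit.
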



\section{Proof of Theorem  \ref{thm:main}}\label{sec:main}

The second  assertion   of Theorem  \ref{thm:main}  follows  immediately from  Corollary  \ref{cor:betti}  and Lemmas \ref{lem:field}.(ii), \ref{lem:chain}.

To prove the first assertion  of  Theorem \ref{thm:main}    we  consider the Floer-Novikov  chain complexes  and their  homology with coefficients in $\Q$  as  in  \cite{Ono2005,  FO1999}. We also  refer  the reader to \cite{FOOO2015} for the latest account  of the  Fukaya-Oh-Ohta-Ono theory of   Kuranishi structures  and their applications.
Let  us       rapidly recall    the construction  of the   Floer-Novikov chain complexes  on general compact symplectic manifolds.
 First we compactify  the quotient spaces $\Mm([\xt, w^-], [\yt,  w^+])/\R$. 
The compactified space $\overline{\Mm} ([\xt, w^-], [\yt, w^+])$
is obtained from  the quotient  space  $\Mm([\xt, w^-], [\yt,  w^+])/\R$   by   adding stable connecting orbits as in \cite[Definitions 19.9,  19.10,  p.1018-1019]{FO1999}.
Here  we do not require  the   regularity of    a  compatible almost complex structure  $J$   and  the $J$-regularity  of a Hamiltonian  function $\Ht \in C^\infty _{(*)}(S^1 \times \Mt^{2n})$.
There  exists   a natural Kuranishi structure  on $\overline{\Mm}([\xt, w^-], [\yt,  w^+])$. Using  abstract  multi-valued  perturbation technique, we can define  the  ``algebraic cardinality" $\la [\xt, w^-], [\yt, w^+]\ra  \in \Q$ when  $\mu([\xt, w^-] - \mu([\yt,  w^+]) = 1$  and   set
$$\p _{J, \Ht}  [\xt, w^-]  : = \sum \la [\xt, w^-], [\yt, w^+]\ra [\yt, w^+], $$
where $[\yt, w^+]$ runs  over the set  of critical points  of $\Aa_{\Ht}$  such that  $\mu([\xt, w^-] - \mu([\yt,  w^+]) = 1$.
It is known  that $\p _{J, \Ht} ^2 = 0$.  The resulting  homology is called   the Floer-Novikov homology of the   Floer-Novikov chain complex $CFN_*(\Ht, J, \Q)$.   We  also   know that  the  Floer-Novikov homology  is invariant  under Hamiltonian isotopy \cite[Theorem 3.1]{Ono2005}.

To  prove  the invariance   of the  Floer-Novikov homology  $HFN_*(\Ht, \Q)$   where $\Ht\in  C^\infty _{(*)} (S^1 \times  M^{2n})$   we use   a  simplified  argument in the  previous subsection. Namely   it suffices to use    an  admissible family $\Ff'$  defined  in Step 5 (but not  the ``better'' neighborhood  $U_{c'} (\Ff)$  which contains $J$-regular   Hamiltonian functions).   With $\Ff'$ we have all necessary energy estimates
without taking  care on the  $J$-regularity of       perturbed  Hamiltonians $\Ht'$.
This completes the  proof  of  Theorem \ref{thm:main}.


\section{Concluding remarks}\label{sec:concl}

1. One   of main   technical  difficulties in    the computation of  the Floer-Novikov  homology  is the variation  of  the  isomorphism
type of the underlying Novikov ring $\Lambda ^R_{\theta, \om}$  under the  variation of   $[\theta]$   inside its   conformal class $\R \cdot [\theta]$.
For example,  when $R $ is a field, by Proposition \ref{prop:compono},  the  Novikov  ring $\Lambda ^R_{\theta, \om}$ is a   field, if and only if 
$\ker \Psi_{\theta, \om} = 0$.   The important idea that  the Floer-Novikov  chain complex  can be defined over  a    proper  sub-ring of  $\Lambda_{\theta, \om} ^R$, which  is constant    for a small variation of $[\theta]$ in  its  conformal class,   has been appeared  first in \cite{Ono2005}.  If there  is a completion  of $R[\Gamma ^0]$   which is an integral domain and  contains   both  different    Novikov rings $\Lambda ^\F_{\theta, \om}$, $\Lambda ^\F_{c\cdot \theta, \om}$ as its sub-rings,   the  proof of the  Main Theorem can be simplified.

2.  In \cite{Seidel1997, Seidel2002}  Seidel  defined  his version of  Floer homology of a symplectomorphism $\phi$ as the Floer homology of the   symplectic   fibration  obtained  from  the   torus mapping of $\phi$.  It is not clear how Seidel's version of Floer-Novikov homology is related to   our construction, especially   how to recognize  the  Calabi invariant   of $\phi$, if $\phi$ is symplectic   isotopic  to     the identity.  



3.  Based  on \cite{FO2001} we conjecture  that  we could  remove  the restriction  of the    field $\Q$  in our   Main Theorem.

4.  In \cite{LO2015}  we  develop    other aspects of    the theory of Floer-Novikov chain complexes to    obtain new   lower bounds  for
the    number  of symplectic  fixed points.  


\subsection*{Acknowledgement}   I thank Kaoru Ono for   a helpful explanation  during preparation of  this  manuscript.

\end{document}